\documentclass[12pt, colorinlistoftodos]{amsart}
\usepackage{a4wide}
\usepackage{amssymb}
\usepackage{amsthm}
\usepackage{amsmath}
\usepackage{amscd}
\usepackage{verbatim}
\usepackage{bm}
\usepackage[mathscr]{eucal}
\usepackage[all]{xy}
\usepackage{todonotes}
\usepackage[backref=true,giveninits=true,doi=false,url=false,isbn=false,backend=biber]{biblatex}
\usepackage{hyperref}
\usepackage{dsfont, stmaryrd}
\usepackage{mathrsfs}
\usepackage{mathtools}
\allowdisplaybreaks[4]

\numberwithin{equation}{section}

\theoremstyle{plain}
\newtheorem{theorem}{Theorem}[section]

\newtheorem{lemma}[theorem]{Lemma}
\newtheorem{proposition}[theorem]{Proposition}

\theoremstyle{definition}
\newtheorem{definition}[theorem]{Definition}

\newtheorem{remark}[theorem]{Remark}

\theoremstyle{remark}

\newcommand{\A}{\mathbb{A}}
\newcommand{\R}{\mathbb{R}}
\newcommand{\Q}{\mathbb{Q}}
\newcommand{\Z}{\mathbb{Z}}
\newcommand{\N}{\mathbb{N}}
\newcommand{\h}{\mathbb{H}}

\newcommand{\C}{\mathbb{C}}

\newcommand{\e}{\mathbf{e}}
\newcommand{\G}{\mathbb{G}}

\newcommand{\Sc}{\mathcal{S}}
\newcommand{\Fc}{\mathcal{F}}

\newcommand{\Oc}{\mathcal{O}}
\newcommand{\Dc}{\mathcal{D}}

\newcommand{\Nm}{\mathrm{Nm}}
\newcommand{\tr}{\mathrm{Tr}}

\newcommand{\erf}{\mathrm{erf}}
\newcommand{\erfc}{\mathrm{erfc}}
\newcommand{\Cl}{\mathrm{Cl}}

\newcommand{\GL}{\mathrm{GL}}
\newcommand{\SL}{\mathrm{SL}}
\newcommand{\GO}{\mathrm{GO}}
\newcommand{\SO}{\mathrm{SO}}
\newcommand{\Spin}{\mathrm{Spin}}
\newcommand{\Gspin}{\mathrm{Gspin}}

\newcommand{\Gal}{\mathrm{Gal}}
\newcommand{\Ind}{\mathrm{Ind}}
\newcommand{\Res}{\mathrm{Res}}
\newcommand{\cha}{\mathrm{Char}}
\newcommand{\sgn}{\mathrm{sgn}}
\newcommand{\frob}{\mathrm{Frob}}
\newcommand{\bfrob}{\overline{\mathrm{Frob}}}

\newcommand{\vol}{\mathrm{vol}}

\newcommand{\Art}{\mathrm{Art}}
\newcommand{\reg}{\mathrm{reg}}

\newcommand{\lb}{\left(}
\newcommand{\rb}{\right)}
\newcommand{\hb}{\mathbf{h}}
\newcommand{\gb}{\mathbf{g}}

\newcommand{\wk}{\widetilde{K}}
\newcommand{\wf}{\widetilde{F}}
\newcommand{\wI}{\widetilde{I}}
\newcommand{\wW}{\widetilde{W}}
\newcommand{\wrho}{\widetilde{\rho}}
\newcommand{\fu}{\mathfrak{u}}
\newcommand{\fp}{\mathfrak{p}}
\newcommand{\fq}{\mathfrak{q}}
\newcommand{\mfp}{\mathfrak{P}}

\newcommand{\pmat}[4]{\begin{pmatrix}
                 #1 & #2\\
                 #3 & #4
\end{pmatrix}}

\begin{document}
\title[Twisted Siegel-Weil formulas]{Twisted Siegel-Weil formulas for $\GL_2$ over non-Galois quartic CM fields}
\author[M.~Zhang]{Mingkuan Zhang}
\address{Fachbereich Mathematik,
Technische Universit\"at Darmstadt, Schlossgartenstrasse 7, D--64289
Darmstadt, Germany}
\email{mzhang@mathematik.tu-darmstadt.de}
\subjclass[2020]{}
\thanks{
}

\begin{abstract}
We establish twisted Siegel-Weil formulas for non-Galois quartic CM fields, identifying the twisted theta integral against a quadratic character with the Doi-Naganuma lift of Hecke's integral. 
This implies the base change of Jacquet-Langlands correspondence for certain Hecke characters from $\Q$ to real quadratic fields via Doi-Naganuma lift is an isomorphism. 
As an application, we prove that the twisted CM values of Borcherds forms are algebraic multiple of logarithm of units, explicitly described by the Fourier coefficients of twisted theta integrals.
\end{abstract}

\date{\today}
\maketitle

\makeatletter
\providecommand\@dotsep{5}
\def\listtodoname{List of Todos}
\def\listoftodos{\@starttoc{tdo}\listtodoname}
\makeatother

\tableofcontents

\section{Introduction}
Siegel-Weil formulas establish a relation between integral of theta series and Eisenstein series, which is discovered by Siegel \cite{Siegel35} and proved in great generality via Weil representation by Weil \cite{Weil65}.
In the language of global theta correspondence \cite{Howe79,GT16} for the symplectic-orthogonal dual reductive pair, it shows the global theta lift of the trivial representation equals an automorphic representation generated by the values of Eisenstein series within Weil's convergent range. 
There are also many generalizations concern Laurent expansions of Eisenstein series, to other dual reductive pairs, and in geometric or arithmetic settings \cite{Chao23,KR94,Ichino04,GQT14}.

Instead of the trivial representation, one can also consider the theta lift of automorphic characters. However, this is not well understood yet. 
For the dual pair $(\widetilde{\mathrm{SL}}_2, \mathrm{O}(3))$, Waldspurger \cite{Walds84,Walds91} studied the local theta correspondence and obtained explicit information about the associated representations. 
Matching Whittaker coefficients, Snitz \cite{Snitz07} considered the global theta lifts of automorphic characters $\chi \neq 1$ of $\mathrm{O}(3)$ and expressed them explicitly in terms of theta series arising from various $\mathrm{O}(1)$'s as a weighted orbital integral at the local level.
This provides a complement to the work of Waldspurger and can be viewed as twisted Siegel-Weil formulas.
Results of such type were first obtained around forty years ago by Schulze-Pillot \cite{SP84,SP84s} in a more classical language.
Inspired by Snitz's work, Gan \cite{Gan08} proved a cubic analog for division algebras of degree 3 over number fields under certain unramification assumption. 
In another direction, Du and Yang \cite{DY19} considered the integral of twisted theta functions and called the induced equality between the arithmetic pairing of twisted arithmetic divisors on modular curves with the normalized metric Hodge line bundle and the derivative of Eisenstein series as the twisted arithmetic Siegel-Weil formula.

Let $K_4$ be a non-Galois quartic CM field and $F_2$ its real quadratic subfield.
Denote by $\sigma$ the non-trivial element in $\Gal(F_2/\Q)$.
Then the reflex field of $K_4$ with CM type $\Phi=\{ 1,\sigma \}$ is another non-Galois quartic CM field $\wk_4$ with real quadratic subfield $\wf_2$.
The composition of $\wk_4$ and $K_4$ is a degree 8 number field $M_8$, which is Galois over $\Q$ and $\Gal(M_8/\Q) \cong D_4$ the real dihedral group. 
Extend $\sigma$ to an element in $\Gal(M_8/\Q)$ and by a slightly abuse of notation, we still denote it as $\sigma$.
The subfield of $M_8$ fixed by $\sigma$ is a real quadratic field $F$.

In this paper, we use the theory of Doi-Naganuma lift to study the global theta lift of automorphic characters over $\wk_4$ and obtain twisted Siegel-Weil formulas.
Doi and Naganuma \cite{DN69} gave a construction of holomorphic Hilbert modular forms of parallel weight over real quadratic fields from elliptic modular forms, which is the first instance of base change \cite{Saito75, Shintani79, Langlands80}.
Let $G=\SL_2$.
Doi-Naganuma lift can be realized as theta lifting from $G_\Q$ to $\mathrm{O}(2, 2)$, which is isogenous to $G_{\wf_2}$ \cite{Zagier75, Kudla78}. 
This provides one perspective to understand the diagonal restriction of Hilbert modular forms.

Let $W$ be the quadratic space $\wk_4$ over $\wf_2$ with quadratic form $\Nm_{\wk_4/\wf_2}$.
For any $\phi\in \Sc(W(\A_{\wf_2})) = \Sc (\A_{\wk_4})$ and $\chi$ a Hecke character on $\A_{\wk_4}^\times$, the twiste theta integral against $\chi$ is
\begin{align}\label{theta wk4 wk2}
\theta_{\wk_4,\chi} (\gb,\phi) := \int_{[\SO(W)]} \theta_{\wk_4} (\gb, h,\phi) \chi(h) dh,\quad
\gb\in\SL_2(\A_{\wf_2}),
\end{align}
where $[\cdot]$ is the automorphic quotient.
If $\chi$ is trivial, then the theta integral is a Hilbert Eisenstein series by Siegel-Weil formulas.
In the following, we will consider the quadratic character $\chi$ defined in Lemma \ref{wk4 character}.

Inspired by the results in \cite{LZ25}, where Hilbert Eisenstein series is realized the Doi-Naganuma lift of elliptic Eisenstein series, we consider the Doi-Naganuma lift of Hecke's integral.
Let $U$ the the quadratic space $F$ over $\Q$ with quadratic form $\Nm_{F/\Q}$.
For any Schwartz function $\Xi\in \Sc (U(\A))$, the theta lift of an automorphic character $\rho$ on $F^\times\backslash\A_F^\times$ is
\begin{align}\label{theta F Q}
\theta_{\rho} ( g,\Xi) = \int_{[\SO(U)]} \theta ( g,  h, \Xi) \rho(h) dh,\quad g\in \SL_2(\A),
\end{align}
which is called Hecke's integral.

Let $V=\Q^2\oplus\wf_2$ be a quadratic space with quadratic form $Q(x,y,\nu) := xy-\Nm \nu$.
For any Schwartz function $\varphi\in \Sc (V(\A))$, the Doi-Naganuma lift of Hecke's integral is
\begin{align}\label{doi naganuma hecke}
I(\hb,\varphi,\Xi,\rho) := \int_{[G]} \theta_V ( g ,\hb,\varphi)  \theta_{\rho} ( g,\Xi) dg,\quad \hb\in \SL_2(\A_{\wf_2}).
\end{align}
For the extension of $\theta_{\wk_4,\chi} (\gb,\phi),\theta_{\rho} ( g,\Xi)$ and $I(\hb,\varphi,\Xi,\rho)$ from $\SL_2$ to $\GL_2$, see Section \ref{weil repn GL2} and \ref{theta integrals} for more details.
For certain characters $\chi$ and $\rho$, we realize the twisted theta integral $\theta_{\wk_4,\chi} (\gb,\phi)$ as Doi-Naganuma lift of $\theta_{\rho} ( g,\Xi)$.

\begin{theorem}[Twisted Siegel-Weil formulas]\label{twisted sw formula}
Let $\chi$ be the Hecke character defined in Lemma \ref{wk4 character}, and $\rho$ the Hecke character defined in Lemma \ref{matching central characters} satisfying
\begin{align*}
\chi_{\wk_4/\wf_2} \chi  = (\chi_{\wf_2}\chi_{F}\rho^{-1}) \circ \Nm_{\wf_2/\Q}
\end{align*}
on $\A_{\wf_2}^\times$.
Then, for any Schwartz function $\phi\in\Sc (W(\A_{\wf_2}))$, there exists Schwartz functions $\varphi\in \Sc (V(\A))$ and $\Xi\in \Sc (U(\A))$ such that 
\begin{align}
    \theta_{\wk_4,\chi} (\gb,\phi) = I (\hb,\varphi,\Xi,\rho),\quad \gb=\hb\in \GL_2(\A_{\wf_2}).
\end{align}
\end{theorem}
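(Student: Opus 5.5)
The strategy is to recognize both sides as theta lifts built from a single seesaw configuration, and then compare them through their Whittaker (Fourier) coefficients.

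\emph{Rewriting the right-hand side.} Unfold $I(\hb,\varphi,\Xi,\rho)$ by substituting the definition (\ref{theta F Q}) of $\theta_\rho$ into (\ref{doi naganuma hecke}). Interchanging the $[G]$ and $[\SO(U)]$ integrations exhibits $I$ as an integral over $[\SO(U)]$ of $\rho(h)$ against the $[G]$-integral of $\theta_V\cdot\theta_U$. Since the pair $(\SL_2,\mathrm{O}(V\oplus U))$ and the dual pair $(\SL_2\times \SL_2, \ldots)$ form a seesaw, the inner integral is a theta lift for the space $V\oplus U$, of signature $(4,2)$ over $\Q$. This space is the key object: $\mathrm{O}(2,2)$ is isogenous to $\mathrm{Res}_{\wf_2/\Q}\SL_2$. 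Granted the standard isogeny, $\mathrm{Res}_{\wf_2/\Q}(W)$ plus the twist by $F$ should sit naturally in this picture through the $D_4$-structure of $M_8$, because $\wf_2$, $F$ and the real quadratic subfield of $K_4$ are the three quadratic subfields of the biquadratic field inside $M_8$.

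\emph{Computing Fourier coefficients.} Rather than pursue the global seesaw identity directly, I will compare Whittaker coefficients on $\GL_2(\A_{\wf_2})$, using the extension to $\GL_2$ from Sections \ref{weil repn GL2} and \ref{theta integrals}.
\begin{enumerate}
\item For the left-hand side, the $t$-th coefficient of $\theta_{\wk_4,\chi}$ is a sum over $x\in \wk_4$ with $\Nm_{\wk_4/\wf_2}x=t$. Since $\SO(W)\cong \wk_4^1$ acts simply transitively on each such fiber, this coefficient is a product of local orbital integrals $\int_{\wk_{4,v}^1}\omega(\gb)\phi(h^{-1}x)\chi(h)\,dh$. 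By choosing $\phi$ factorizable, it becomes an Euler product of local twisted representation densities.
\item For the right-hand side, unfold $\theta_V$ along the unipotent radical of $G$ in the usual Doi--Naganuma manner (Zagier/Kudla). This expresses the $t$-th coefficient as an integral over $N(\A)\backslash G(\A)$ of a Whittaker-type function of $\theta_\rho$ against a partial Fourier transform of $\varphi$. The coefficients of $\theta_\rho$ are in turn orbital integrals over $F^1$ twisted by $\rho$.
\end{enumerate}

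\emph{Matching local data.} The relation $\chi_{\wk_4/\wf_2}\chi=(\chi_{\wf_2}\chi_F\rho^{-1})\circ\Nm_{\wf_2/\Q}$ (Lemma \ref{matching central characters}) guarantees that the central characters agree, so both sides transform identically under the center of $\GL_2(\A_{\wf_2})$. Each side then lies in the space of automorphic forms attached to the same (base-changed) character, which reduces the theorem to matching Whittaker functions place by place.

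I then choose $\varphi=\otimes\varphi_v$ and $\Xi=\otimes\Xi_v$ locally: standard Gaussians at the archimedean places, and characteristic functions of lattices at almost all finite places. At the remaining finite places, local theta correspondence surjectivity (Howe duality / the local seesaw) produces, for a given $\phi_v$, a pair $(\varphi_v,\Xi_v)$ whose local Whittaker function equals the local orbital integral of $\phi_v$. Both sides are determined by their Whittaker coefficients together with their constant terms. The constant terms vanish for $\chi$ nontrivial on the relevant anisotropic torus, and should vanish for $\rho$ as well. Matching the coefficients therefore gives equality for $\gb=\hb\in\GL_2(\A_{\wf_2})$.

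\emph{Main obstacle.} The hard step is the local matching at places that ramify in $M_8$ (and at places splitting differently in $\wk_4$ versus $F$). There one must show that every local twisted orbital integral of $\phi_v$ arises from some $(\varphi_v,\Xi_v)$. My first attempt will be to establish a local seesaw identity
\begin{align*}
\int_{\wk_{4,v}^1}\omega(\gb)\phi_v(h^{-1}x)\chi_v(h)\,dh=\int_{F_v^1}\int_{N\backslash \SL_2(\Q_v)}(\ldots)\rho_v(h)\,dg\,dh,
\end{align*}
that is, to identify the local $\chi_v$-isotypic quotient of $\Sc(W_v)$ with the corresponding theta lift through $V_v\oplus U_v$. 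I would prove it using uniqueness of the local theta lift and the explicit structure of the torus $\wk_{4,v}^1$ as a subgroup of $\mathrm{GSpin}(V_v\oplus U_v)$ coming from the $D_4$ geometry.
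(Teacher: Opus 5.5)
Your proposal has a genuine gap at exactly the step you flag as the main obstacle, and the tool you propose does not close it. Howe duality, or a local seesaw, tells you that the local theta lift of $\pi_{\rho,p}$ through $V_p$ is unique and irreducible. It does \emph{not} identify that lift with $\bigotimes_{\fp\mid p}\pi_{\chi,\fp}$ at primes ramified in $M_{32}$. That identification is a local base-change statement, and without it you cannot assert that every twisted orbital integral of $\phi_v$ is the local Whittaker function of some $(\varphi_v,\Xi_v)$.

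Nothing in your sketch (``explicit structure of the torus'', ``$D_4$ geometry'') supplies it. The paper shows in Proposition \ref{value T1 ramified} that the natural invariant vectors $\fu_{2,p},\fu_{3,p}$ do \emph{not} match at ramified places, and it leaves explicit ramified matching open. Also, equality of central characters does not ``reduce the theorem to matching Whittaker functions place by place''. A common central character says nothing about the local components.

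The missing idea is to bypass the ramified places with a global argument. The paper works only with spherical data: $\phi_0$ from Proposition \ref{value T1 K4} and $(\varphi_0,\Xi_0)$ from Proposition \ref{value T1 doi}. At every $p$ unramified in $M_{32}$ it computes both local Whittaker functions on $t(\alpha)$ explicitly, using the Frobenius classes in $\Gal(M_{32}/\Q)$ and the splitting of $p$ in $F$, $\wf_2$, $\wk_4$. It then checks that they agree (Lemma \ref{matching unramified}). This is where the specific choice $\eta(1,0,0)=-1$ in $\rho$ matters, and your proposal does not carry out this check either; you only assume lattice characteristic functions work.

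Strong multiplicity one for $\GL_2(\A_{\wf_2})$ then shows that $\theta_{\wk_4,\chi}(\gb,\phi_0)$ and $I(\hb,\varphi_0,\Xi_0,\rho)$ generate the same cuspidal representation. Hence $\pi_\chi\cong\Theta(\chi)=\Theta(\pi_\rho)$ as spaces of functions, by multiplicity one. So every $\theta_{\wk_4,\chi}(\gb,\phi)$ is a finite sum of lifts $I(\hb,\varphi_i,\Xi_i,\rho)$, with no ramified computation at all. The local identification you wanted then comes out as a consequence rather than an input.

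A purely local route would instead need an independent theorem that the $(\GL_2,\GO(2,2))$ theta correspondence realizes local base change, plus a local Langlands comparison of the dihedral representations. Finally, your seesaw reformulation through $V\oplus U$ is never used, and that space has signature $(3,3)$, not $(4,2)$.
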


This result could be interpreted as a base change of Jacquet-Langlands correspondence. 
Assume now $K$ is a quadratic extension of a number field $L$. 
Let $\eta$ be a Hecke character of $K^\times\backslash\A_K^{\times}$. 
For any prime ideal $\mathfrak{P} \subset K$ lying over a prime ideal $\mathfrak{p}\subset L$, let $K_\mathfrak{P}^1$ be the compact subgroup of norm 1 elements of $K_\mathfrak{P}$, identified with $\SO(K_\mathfrak{P},\Nm_{K_\mathfrak{P}/L_\mathfrak{p}})$. 
The subspace
$$S(K_\mathfrak{P}, \eta):=\left\{f \in S(K_\mathfrak{P}): f\left(h^{-1} x\right)=\eta(h) f(x), \forall h \in K_\mathfrak{P}^1, \forall x \in K_\mathfrak{P}\right\}$$
is invariant under the Weil representation of $\mathrm{SL}_2(L_\mathfrak{p})$, and extended to an admissible and irreducible representation $\pi_{\eta,\mathfrak{p}}$ of $\GL_2(L_\fp)$ with central character $\chi_{K_\mathfrak{P} / F_\mathfrak{p}} \eta |_{F_\mathfrak{p}^\times}$ by Jacquet and Langlands \cite[Proposition 1.5]{JL70}.
Globally, for any Schwartz function $\phi\in \Sc (\A_K)$, the theta lift of $\eta$ is
\begin{align*}
\theta_{\eta} (\gb,\phi) := \int_{[\SO(K)]} \theta_{K} (\gb, h,\phi) \eta(h) dh,\quad
\gb\in\SL_2(\A_{L})
\end{align*}
and extended to a function on $\GL_2(\A_{L})$.
We denote by $\Theta(\eta)$ the automorphic representation of $\GL_2(\mathbb{A}_L)$ generated by $\theta_\eta( \gb, \phi)$.
Let $\pi_\eta$ be the restricted tensor product of the local representations $\pi_{\eta,\mathfrak{p}}$.
Then
\begin{align}
    \pi_\eta \cong \Theta(\eta)
\end{align}
as automorphic representations \cite[Section 13]{HK91}. More generally, the Jacquet-Langlands correspondence gives an 1-1 correspondence between irreducible two dimensional representations of the Weil group and supercuspidal representations of $\mathrm{GL}_2(L)$.

For the quadratic extension $\wk_4/\wf_2$ and $F/\Q$, we denote by $\pi_\chi$ and $\pi_\rho$ the Jacquet-Langlands correspondence of $\chi$ and $\eta$, which are isomorphic to the representation generated by twisted theta integrals in the form \eqref{theta wk4 wk2} and \eqref{theta F Q} respectively.
Doi-naganuma lift, as a base change of modular forms, gives rise to the base change of Jacquet-Langlands correspondence as follows.

\begin{theorem}
Let $\chi$ and $\rho$ be characters as in Theorem \ref{twisted sw formula}.
Denote by
\begin{align*}
\Theta(\pi_\rho) := \left\{ I (\hb,\varphi,\Xi,\rho) :  \varphi\in \Sc (V(\A)), \Xi\in \Sc (U(\A)) \right\}
\end{align*}
the Doi-Naganuma lift of $\pi_\rho$. 
Then
\begin{align}
    \pi_\chi \cong \Theta(\pi_\rho).
\end{align}
\end{theorem}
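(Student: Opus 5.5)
The strategy is to deduce this from Theorem \ref{twisted sw formula} together with the identification $\pi_\eta\cong\Theta(\eta)$ of \cite[Section 13]{HK91}, applied to the quadratic extension $\wk_4/\wf_2$. The latter gives $\pi_\chi\cong\Theta(\chi)$, where $\Theta(\chi)$ is the space spanned by the functions $\theta_{\wk_4,\chi}(\gb,\phi)$ for $\phi\in\Sc(W(\A_{\wf_2}))$, extended to $\GL_2(\A_{\wf_2})$. It therefore suffices to show the equality of spaces of automorphic forms on $\GL_2(\A_{\wf_2})$:
\[
\Theta(\chi)=\Theta(\pi_\rho).
\]

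The inclusion $\Theta(\chi)\subseteq\Theta(\pi_\rho)$ is immediate from Theorem \ref{twisted sw formula}. That theorem produces, for each $\phi$, Schwartz functions $\varphi,\Xi$ with $\theta_{\wk_4,\chi}(\gb,\phi)=I(\hb,\varphi,\Xi,\rho)$.

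For the reverse inclusion I would argue representation-theoretically rather than by constructing preimages.

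First, $\Theta(\pi_\rho)$ is $\GL_2(\A_{\wf_2})$-stable. The group acts on $I(\hb,\varphi,\Xi,\rho)$ by right translation in $\hb$, and this is intertwined with the Weil representation on $\varphi$, extended to $\GL_2$ as in Section \ref{weil repn GL2}. Hence $\Theta(\pi_\rho)$ is an automorphic representation.

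Second, it is irreducible, or zero. The map $(\varphi,\Xi)\mapsto I(\cdot,\varphi,\Xi,\rho)$ factors through the global theta correspondence for the pair $(\SL_2,\mathrm{O}(V))$ applied to the automorphic representation generated by $\theta_\rho(g,\Xi)$, which is $\pi_\rho$. Since $\mathrm{O}(V)$ is isogenous to $G_{\wf_2}$, Howe duality implies that the theta lift of the irreducible cuspidal $\pi_\rho$ is irreducible whenever it is nonzero and cuspidal. Its local components are the local base changes of $\pi_{\rho,p}$, by the local Doi--Naganuma/Shintani theory \cite{Saito75,Shintani79}.

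Third, nonvanishing and the final identification follow from Theorem \ref{twisted sw formula}. The theorem shows $\Theta(\pi_\rho)$ contains the nonzero space $\Theta(\chi)\cong\pi_\chi$, which is an irreducible subrepresentation. Irreducibility of $\Theta(\pi_\rho)$ then forces $\Theta(\chi)=\Theta(\pi_\rho)$, and so $\pi_\chi\cong\Theta(\pi_\rho)$.

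The main obstacle I expect is the second step, namely the irreducibility (equivalently, multiplicity-one) of $\Theta(\pi_\rho)$. There are two issues here.

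The first issue is cuspidality. One must check that $\pi_\rho$ is cuspidal, i.e. that $\rho$ does not factor through $\Nm_{F/\Q}$. This should follow from the construction of $\rho$ in Lemma \ref{matching central characters}, since $\chi$ is a genuinely non-Galois-invariant quadratic character.

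The second issue is that the lift to $\mathrm{O}(2,2)$ may be nonzero and still fail to be cuspidal or irreducible. If Howe duality is awkward to apply directly, I would instead compare local components at each place. Both $\pi_\chi$ and $\Theta(\pi_\rho)$ have local components equal to the Weil-group-theoretic base change $\mathrm{Ind}_{\wk_4}^{\wf_2}\chi$ restricted from $\mathrm{Ind}_F^{\Q}\rho$, and this equality is what the character identity $\chi_{\wk_4/\wf_2}\chi=(\chi_{\wf_2}\chi_F\rho^{-1})\circ\Nm_{\wf_2/\Q}$ encodes. The comparison would be carried out via the Mackey formula in $\Gal(M_8/\Q)\cong D_4$. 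Strong multiplicity one for $\GL_2$ then gives that every irreducible constituent of $\Theta(\pi_\rho)$ is isomorphic to $\pi_\chi$, and multiplicity one yields the equality of spaces.
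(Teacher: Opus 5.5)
Your main route is circular. In the paper, Theorem \ref{twisted sw formula} is not an independent input. In Theorem \ref{twisted siegel weil} it is the ``Consequently'' part, deduced from $\pi_\chi\cong\Theta(\pi_\rho)$, and the introduction says the twisted Siegel--Weil formula ``follows immediately'' from the base change isomorphism. So the two things you take from it, the inclusion $\Theta(\chi)\subseteq\Theta(\pi_\rho)$ and the nonvanishing of $\Theta(\pi_\rho)$, are exactly what must be proved. Once they are removed, your argument contains no identification of the two representations.

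The paper supplies that identification directly:
\begin{itemize}
\item It fixes explicit vectors: $\phi_0$, the characteristic function of $\widehat{\Oc}_{\wk_4}$ times the Gaussian (the newform of $\pi_\chi$), and $(\varphi_{0,p},\Xi_{0,p})=\cha(\Z_p^2\oplus\Oc_{\wf_2,p})\otimes\cha(\Oc_{F,p})$.
\item It computes both local Whittaker functions on $t(\alpha)$: Proposition \ref{value T1 K4} for the twisted theta integral, and Proposition \ref{value T1 doi} by unfolding $I$ in the mixed model.
\item In Lemma \ref{matching unramified} it checks, case by case over the Frobenius classes $(1,0,0)^a(1,0,1)^b\in\Gal(M_{32}/\Q)$, that these agree above every prime unramified in $M_{32}$.
\item It concludes by strong multiplicity one.
\end{itemize}
That computation is also where the nonvanishing you borrowed comes from.

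Your fallback (compare local components, then apply strong multiplicity one) has the right shape but skips the same content.
\begin{itemize}
\item The identity $\chi_{\wk_4/\wf_2}\chi=(\chi_{\wf_2}\chi_F\rho^{-1})\circ\Nm_{\wf_2/\Q}$ is only an equality of central characters. The paper uses it solely to reduce Lemma \ref{matching unramified} to the torus $\{t(\alpha)\}$. Equal determinants do not make two two-dimensional representations equal.
\item Mackey's formula writes the restriction of $\Ind_F^{\Q}\rho$ to $\wf_2$ as an induction from $F\wf_2$. That field is the maximal totally real subfield of $M_8$ (the field $K$ in the diagram), not the CM field $\wk_4$. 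That the result is also induced from $\chi$ on $\wk_4$ is a special property of this pair $(\chi,\rho)$. It has to be checked Frobenius class by Frobenius class, which is exactly the work in Propositions \ref{value T1 K4}, \ref{value T1 doi} and Lemma \ref{matching unramified}.
\end{itemize}

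Finally, your Howe-duality irreducibility step matches what the paper uses tacitly. It should, however, be run for the similitude pair set up in Section \ref{weil repn GL2}, because the restriction of the dihedral $\pi_\rho$ to $\SL_2(\A)$ is reducible.
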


The idea of the proof is to apply an analogue of Kudla's matching principle \cite{Kudla03} to prove local twisted Siegel-Weil formulas.
For non-Galois quartic CM fields, our method doesn't apply to trivial characters since they don't satisfy the condition in Lemma \ref{matching central characters}.
However, for the other two kinds of degree 4 CM fields: biquadratic fields and cyclic fields, the author and Li showed that Hilbert Eisenstein series could be realized as the Doi-naganuma lift of elliptic Eisenstein series \cite{LZ25}.

Siegel-Weil formula is also an important tool to study values at complex multiplication points (CM values) of modular functions \cite{Sch09,BY06}.
Bruinier, Kudla and Yang 
interpreted them as integral of Borcherds forms over torus and applied Siegel-Weil formulas to show they are linear combinations of coefficients of the derivative of Eisenstein series and the input of Borcherds forms \cite{BKY12}.
One crucial fact is that the coefficients of the derivative of incoherent Eisenstein series are rational multiples of logarithm of algebraic integers, see, for example, \cite{HY12,YYY21}.
Similar phenomenon also occurs in twisted Siegel-Weil formulas. 
In Snitz's work, the Fourier coefficients of his twisted theta integrals are rational numbers.
Translating Snitz's work into modular forms setting, in \cite{HIPS25}, Herrero, Imamo$\overline{\mathrm{g}}$lu, Pippich and Schwagenscheidt used it to study special values of Green's functions on hyperbolic 3-space.

To study real dihedral modular forms, Li \cite{Li16} introduced twisted CM cycles over non-Galois quartic CM fields and showed the twisted CM values of Hilbert modular functions are linear combinations of Fourier coefficients of real dihedral modular forms.
Li also proposed a conjecture about the prime factorization of twisted CM values of Hilbert modular functions \cite[Conjecture 1.5]{Li16}.
Motivated by this conjecture, in this paper, we apply twisted Siegel-Weil formulas to study the values at twisted CM cycles (twisted CM values) of Borcherds forms on Hilbert modular surface. 

Let $V_2$ be the quadratic space $\Q^2 \oplus F_2$ with quadratic form $xy-\Nm$.
For an even integral lattice $ L \subset V_2$, we denote by $L^{\vee} \subset V$ its dual lattice, and
$\widehat{L}:=L \otimes \widehat{\mathbb{Z}}, \widehat{L}^{\vee}:=L^{\vee} \otimes \widehat{\mathbb{Z}}$.  
Let $K\subset \Gspin(V_2)(\A_f)$ be the open compact subgroup fixing $\widehat{L}$ and $\Dc$ the Grassmannian of negative oriented two dimensional subspaces of $V_2(\R)$.
Then the connected component of the complex points of the associated orthogonal Shimura varieties is isomorphic to the Hilbert modular surface $X_{\Gamma_K}\cong  \Gamma_K\backslash\h^2$, where $\Gamma_K = K \cap \SL_2(\Oc_{\wf_2})$, see Section 3 for more information.
When $\Gamma_K = \SL_2(\Oc_{F_2})$, we write $X_{F_2} = X_{\Gamma_K}$.
As usual, we let $H_{k, L}(\Gamma)$ denote the space of harmonic Maass forms valued in $\mathbb{C}\left[L^{\vee} / L\right]$ of weight $k \in \frac12\mathbb{Z}$ and representation $\rho_L$ on a congruence subgroup $\Gamma \subset \SL_2(\Z)$, see \cite{BF04} for more details. 
If $\Gamma = \SL_2(\Z)$, we will omit it from the notation.

For a $K$-invariant Schwartz function $\varphi \in \Sc(V_2(\widehat{\mathbb{Q}}))^K$ and $\tau = u+iv \in \mathbb{H}$, the theta function
\begin{equation*}
\theta(\tau, z, h, \varphi)= \sum_{x \in V_2(\mathbb{Q})} \omega \left(g_\tau\right) \varphi_{\infty}(x, z) \varphi\left(h^{-1} x\right),\quad
g_\tau = n(u)m(\sqrt{v}),
\end{equation*}
is an automorphic function of $(z, h) \in \Dc \times \Gspin(V_2)(\A_f)$, where $\varphi_{\infty}(x, z)$ is the Gaussian.
Let $\varphi_\mu$ be the characteristic function of $\mu+\widehat{L}$. The vector valued theta function
$$\Theta_L(\tau, z, h):= \sum_{\mu \in L^{\vee} / L} \theta \left(\tau, z, h, \varphi_\mu \right) \mathfrak{e}_\mu$$
is a non-holomorphic modular form of weight 0 in $\tau$.
For $r\in \N_0$, let $R_{\tau}^r$ be the iterated raising operator. 
We consider the regularized theta lift of $f\in H_{-2r, L}$ 
\begin{align*}
\Phi_f^r(z, h) & =(4 \pi)^{-r} \int_{\mathcal{F}}^{\reg}\left\langle R_\tau^r f(\tau), \overline{\Theta_L(\tau, z, h)}\right\rangle d \mu(\tau),
\end{align*}
where $\mathcal{F}$ is the fundamental domain of $\SL_2(\Z) \backslash \mathbb{H}$.
Automorphic forms arising in this way are called Borcherds forms \cite{Kudla03}.

Let $\mathfrak{d}_{K_4/F_2}$ be the relative different of the extension $K_4/F_2$ and $\widetilde{D} = \Nm_{F_2/\Q} \mathfrak{d}_{K_4/F_2}$.
Consider the $\wf_2$-quadratic spaces 
\begin{align*}
W^+ = (\wk_4, \frac{\Nm_{\wk_4/\wf_2}}{\sqrt{\widetilde{D}}}),\quad 
W^- = (\wk_4,  \frac{\Nm_{\wk_4/\wf_2}}{-\varepsilon\sqrt{\widetilde{D}}}),
\end{align*}
where $\varepsilon\in \wf_2^+$ such that $-\varepsilon$ is locally a norm at all finite places.
For any CM point represented by a pair $(\mathfrak{a},r)$, where $\mathfrak{a}\subset F_2$ is a fractional ideal and $r\in F_2^+$, the map $\iota$ \eqref{iso CM point} gives rise to an isomorphism between $V_2$ and $(\wk_4, \frac{-1}{\Nm \mathfrak{a}} \tr_{\wf_2/\Q} \frac{\Nm_{\wk_4/\wf_2}}{\sqrt{\widetilde{D}}})$.
Let $T^+$ be the preimage of $\Res_{\wf_2/\Q} \SO (W^+)$ in $\Gspin (V_2)$ and $K_{T^+} := T^+(\widehat{\Q}) \cap K$. Then the positive definite subspace $W^+\otimes_{\wf_2,\sigma^2} \R$ gives rise to CM points $z^\pm_+ \in \Dc$ up to orientation, and hence a zero cycle
\begin{equation*}
Z(W^+) := T^+(\mathbb{Q}) \backslash \{z^\pm_+\} \times T^+(\widehat\Q) / K_{T^+}
\end{equation*}
on $X_{F_2}$ defined over $\wf_2$. 
It's Galois conjugate under the action of $\sigma$ is the CM cycle $Z(W^-)$ similarly defined by the quadratic space $W^-$ \cite[Section 2]{BKY12}.
Together, they give us the CM cycle $Z(W) := Z(W^+) + Z(W^-)$ on $X_{F_2}$, which is defined over $\Q$, see also \cite[Section 3]{BY06}. 

Any character $\chi$ on $\SO(W)$ naturally induces a character $\chi_{T^+}$ on $T^+$ via the surjection in \eqref{torus so}.
In this paper, we consider the character $\chi$ as in Theorem \ref{twisted sw formula}.
Using the action of general ideal class group on CM points, we define the twisted CM cycle
\begin{align*}
Z_\chi(W^+)= \sum_{\gamma \in T^+(\mathbb{Q}) \backslash T^+(\widehat\Q) / K_{T^+}} \chi_{T^+}(\gamma) \gamma \bullet z_+^\pm
\end{align*}
and $Z_\chi(W^-)$ similarly. 
In \cite{BLY22}, Bruinier, Li and Yang applied deformed theta integrals and Siegel-Weil formulas to show the CM values of Borcherds forms are described by Whittaker coefficients of Hilbert Eisenstein series.
Calculating the Whittaker functions of the Doi-Naganuma lift of deformed theta integrals in the same spirit, we use twisted Siegel-Weil formulas to show that the twisted CM values are algebraic multiple of logarithm of units, explicitly given by the Whittaker functions of twisted theta integrals in the form \eqref{theta wk4 wk2}.

\begin{theorem}[Twisted CM values] \label{tsw introduction}
For $r \in \mathbb{N}_0$ and an integral lattice $L \subset V_2(\Q)$, let
\begin{align*}
f=\sum_{m \in \mathbb{Q}, \mu \in L^{\vee} / L} c(m, \mu) q^m \mathfrak{e}_\mu \in M_{-2 r, \rho_L}^{!}
\end{align*}
be a weakly holomorphic modular form with rational Fourier coefficients, which has vanishing constant term when $r=0$. 
Let $Z_\chi (W^\pm)$ be the twisted CM cycles defined in Equation \eqref{twisted ZW}.
Then there exists $\kappa \in \Q$ depending on $f$ such that 
\begin{align}
\begin{split}
&\Phi_f^r\left(Z_\chi\left(W^+\right)\right)-(-1)^r \Phi_f^r\left(Z_\chi\left(W^-\right)\right)  
= \frac{\deg Z(W)}{-4} \sum_{m>0, \mu \in L^{\vee} / L} c(-m, \mu) m^r \\
&\quad \times \sum_{\substack{\alpha \in \wf_2^+, \operatorname{Tr}(\alpha)=m \\ \exists \lambda_\alpha\in F, \text{ s.t. } \Nm \alpha = \Nm \lambda_\alpha^{-1} }} P_r\left(\frac{\alpha-\alpha^{\prime}}{m}\right) 
W_f(\alpha,\phi_\mu) \log (|\lambda_\alpha/\lambda_\alpha'|\varepsilon_\rho^\kappa),
\end{split}
\end{align}
where $\phi_\mu$ is the image of $\varphi_\mu$ under the isomorphism $\iota$ \eqref{iso CM point}, and 
\begin{align*}
W_f(\alpha,\phi_\mu)
= \int_{\A_{\wk_4^1,f}}   \phi_\mu \left(\alpha \sqrt{\widetilde{D}} \beta^{-1} h^{-1}\right) \chi (h) d h 
\end{align*}
if there exists $\beta \in \A_{\wk_4,f}$ such that $\sqrt{\widetilde{D}} \alpha = \Nm\beta$ and 0 otherwise. 
\end{theorem}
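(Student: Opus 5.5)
The plan is to follow the strategy of Bruinier--Kudla--Yang and Bruinier--Li--Yang, with the twisted Siegel--Weil formula (Theorem \ref{twisted sw formula}) in place of the classical one.

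\textbf{Step 1: write the twisted CM value as a double integral.} By definition of $Z_\chi(W^\pm)$, the value $\Phi_f^r(Z_\chi(W^+))$ is an integral of $\Phi^r_f(z_+^\pm,h)\chi_{T^+}(h)$ over $[T^+]$. Unfolding the regularized theta lift and interchanging the regularized $\tau$-integral with the compact torus integral expresses it as
\begin{align*}
(4\pi)^{-r}\int^{\reg}_{\mathcal F}\Big\langle R_\tau^r f(\tau),\,\overline{\textstyle\sum_\mu \theta_{\wk_4,\chi}(g_\tau,\phi_\mu)\mathfrak e_\mu}\Big\rangle\, d\mu(\tau).
\end{align*}
Here $\theta_{\wk_4,\chi}$ is restricted to the diagonal $\SL_2(\A)\hookrightarrow\SL_2(\A_{\wf_2})$, and $\phi_\mu=\iota(\varphi_\mu)$. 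This uses the isomorphism $\iota$ together with the surjection $T^+\to\Res_{\wf_2/\Q}\SO(W^+)$, under which $\chi$ pulls back to $\chi_{T^+}$. The same computation applies to $W^-$, where the archimedean component at $\sigma^2$ changes signature.

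\textbf{Step 2: replace the theta integral by a holomorphic preimage.} Following \cite{BLY22}, I would produce a nonholomorphic Hilbert modular form $\widetilde\theta$ for $\SL_2(\A_{\wf_2})$, and its diagonal restriction, whose image under the lowering (Maass) operator recovers the combination of $\theta_{\wk_4,\chi}(\phi_\mu)$ for $W^+$ and $W^-$ appearing on the left-hand side; the sign $(-1)^r$ comes from how $R^r$ interacts with the two signatures. To construct it, I would use Theorem \ref{twisted sw formula} to write $\theta_{\wk_4,\chi}=I(\hb,\varphi,\Xi,\rho)$, and then deform the archimedean component of $\varphi$, as in the deformed theta integrals of \cite{BLY22}. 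The result is a Doi--Naganuma lift of Hecke's integral with a modified archimedean Schwartz function. I would then apply Stokes' theorem, in the form of the Bruinier--Funke pairing, to reduce the regularized integral to the constant term of $\langle f,\widetilde\theta^+\rangle$, where $\widetilde\theta^+$ is the holomorphic part. This gives a finite sum of $c(-m,\mu)$ times the $m$-th coefficients of $\widetilde\theta^+$; the operator $R^r$ produces the factor $m^rP_r$.

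\textbf{Step 3: compute those coefficients through Whittaker functions.} The $m$-th coefficient of the diagonal restriction is a sum over $\alpha\in\wf_2^+$ with $\tr\alpha=m$ of the $\alpha$-th Whittaker coefficient. That coefficient factors as a finite part times an archimedean part. The finite part is the twisted orbital integral $W_f(\alpha,\phi_\mu)$; it vanishes unless $\sqrt{\widetilde D}\alpha$ is a local norm everywhere, hence a global norm $\Nm\beta$. The archimedean part is the derivative in the deformation parameter. To evaluate it I would compute it on the Doi--Naganuma side, as an integral against Hecke's integral $\theta_\rho$ over $[\SO(U)]=F^1\backslash\A_F^1$. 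The unit group of $F$ contributes the regulator-type term $\log\varepsilon_\rho^\kappa$. Representing $\Nm\alpha=\Nm\lambda_\alpha^{-1}$ with $\lambda_\alpha\in F$ turns the archimedean integral into $\log|\lambda_\alpha/\lambda_\alpha'|$, via the standard integral $\int_{\R^\times_+}$ against the odd character $\rho_\infty$. The global constant $\deg Z(W)/(-4)$ then comes from the volume normalisation of $[\SO(W)]$.

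\textbf{Main obstacle.} I expect Step 3 to be the main difficulty: identifying the archimedean Whittaker function of the deformed lift explicitly as $\log(|\lambda_\alpha/\lambda'_\alpha|\varepsilon_\rho^\kappa)$. This requires tracking the local matching of Theorem \ref{twisted sw formula} at infinity carefully, and proving that $\kappa$ is rational and independent of $\alpha$. I would first try to reduce it to a one-variable computation on $F_\infty^1\cong\R^\times$, where the twisting character $\rho_\infty=\sgn$ converts the usual Eisenstein derivative into a logarithm of a ratio of conjugates.
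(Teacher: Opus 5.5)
There is a genuine gap. Your Step 1 and the overall Stokes/Bruinier--Funke framing match the paper, but Step 2 puts the deformation in the wrong place, and that deformation is the crux of the argument.

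\textbf{Why deforming $\varphi_\infty$ cannot work.} If you keep the character $\rho$ and only modify the archimedean Schwartz function on $V$, every resulting $I(\hb,\widetilde\varphi,\Xi,\rho)$ still lies in $\Theta(\pi_\rho)\cong\pi_\chi$. Three consequences follow.
\begin{enumerate}
\item The archimedean components of $\pi_\chi$ are weight-one (limits of) discrete series. Its weight-$(1,1)$ vectors are therefore holomorphic and are killed by $(L_1+L_2)\mathrm{RC}_r$, so none of them is a lowering-preimage.
\item Its Whittaker coefficients are products of local Whittaker functions, algebraic at the finite places. So no term like $\log|\lambda_\alpha/\lambda_\alpha'|$, attached to a global $\lambda_\alpha\in F$, can ever appear.
\item No differential operator in $z$ can relate $\varphi_\infty^{(1,1)}$ to $\varphi_\infty^{(\pm1,\mp1)}$ directly. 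The $\mathfrak{so}(V)$-action commutes with the $\SL_2$-action, and these Schwartz functions have opposite $\SL_2$-weights. The identity has to be moved through the $[G]$-integral onto Hecke's integral, and there a lowering operator on the undeformed, holomorphic $\theta_\rho(g,\Xi_f\Xi_\infty^+)$ gives zero.
\end{enumerate}

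\textbf{The missing idea.} You need to deform the torus integral in Hecke's integral, not the Schwartz function on $V$. Replace $\rho$ by $\wrho=\lg\cdot\rho$, where $\lg=2\log\varepsilon_\rho\{\log|t|/\log\varepsilon_\rho\}$ is the sawtooth function on $[\SO(U)]$; this is the Charollois--Li deformed theta integral. Because $\wrho$ is not a character, Lemma~\ref{lowering deformed} gives $L\theta_{\wrho}(g,\Xi_f\Xi_\infty^+)=\theta_\rho(g,\Xi_f\Xi_\infty^-)+\log\varepsilon_\rho\,\theta_{\rho,f}(g,\Xi_f\Xi_\infty^-)$. The required preimage is then the Doi--Naganuma lift $\wI(\hb,\varphi_f\varphi_\infty^{(1,1)},\Xi_f\Xi_\infty^+,\rho)$, by Proposition~\ref{lowering DN deformed}.

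\textbf{Consequences for Step 3.} There is no deformation parameter to differentiate, and the sign character is not the source of the logarithm (in the paper $\rho_\infty$ is trivial).
\begin{enumerate}
\item The logarithm comes from normalising the archimedean integral. This shifts the argument of $\lg$ by $\log\lambda_0$, with $\lambda_0=\sqrt{|\lambda'/\lambda|}$. The shift produces the holomorphic coefficient $a_\alpha(z)=-4\log\lambda_0\,\mathbf{e}(\alpha'z_1+\alpha z_2)$ for totally positive $\alpha$ (Proposition~\ref{FE deformed}), while $b_\alpha$ decays on the diagonal and drops out.
\item The factor $\varepsilon_\rho^\kappa$ comes from the jump term $-2\log\varepsilon_\rho\widetilde{\mathrm{RC}}_rI_f$, which contributes $2\sqrt{D}^r\log\varepsilon_\rho\sum_\mu c_\mu(f)$. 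Proving $c_\mu(f)\in\mathbb{Q}$ requires choosing the matching sections $(T^{1,\Delta},\widetilde\omega_f)$-invariant and then applying \cite[Proposition 4.11]{BLY22}.
\item To use a single preimage for both cycles, you need the same finite matching data for $W^+$ and $W^-$. The paper obtains this from the isometry $x\mapsto\sigma^2(\beta_0)x$ and \eqref{same finite whittaker}, which allow $(\varphi_\mu^+,\Xi_\mu^+)=(\varphi_\mu^-,\Xi_\mu^-)$. Your proposal does not address this point.
\end{enumerate}
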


\begin{remark}
Note that $P_r\left(\frac{\alpha-\alpha^{\prime}}{m}\right) $ is a rational multiple of $\sqrt{D}^r$ and $W_f(\alpha,\phi_\mu)\in \Q$.
\end{remark}

\subsection*{Proof Strategy}
Motivated by the twisted CM cycles in \cite{Li16}, we introduce a degree 32 Galois extension $M_{32}$ of $\Q$ containing $M_8$ such that $\Gal(M_{32}/\Q) \cong (\Z/4\Z)^2 \rtimes \Z/2\Z$ and $\Gal(M_{32}/\wk_4) \cong \Z/8\Z$. 
The restriction of the order 8 character on $\Gal(M_{32}/\wk_4)$ to $\SO(W)$ via the Artin map is a quadratic character $\chi$ by Lemma \ref{wk4 character}.
Matching the central characters of the twisted theta integral \eqref{theta wk4 wk2} and the Doi-Naganuma lift of Hecke's integral \eqref{doi naganuma hecke}, we obtain a Hecke character $\rho$ on $\A_F^\times$ satisfying the condition in Lemma \ref{matching central characters}.

To prove the twisted Siegel-Weil formulas, i.e., the base change of Jacquet-Langlands correspondence being an isomorphism, we utilize the extension of Weil representation to a subgroup of $\GL_2 \times\GO(V)$ to obtain adelic Hilbert modular forms on $\GL_2(\A_{\wf_2})$, see Section \ref{weil repn GL2} and \ref{theta integrals} for more details.
Then we calculate the Whittaker functions of the Whittaker newform $\theta_{\wk_4,\chi}(\gb,\phi_0)$ in the theory of Jacquet-Langlands correspondence in Section \ref{twisted theta integral}. 
Applying the mixed model of Weil representation, we unfold the Doi-Naganuma lift of Hecke's integral to obtain the Whittaker functions of $I(\hb,\varphi_0,\Xi_0,\rho)$ for special $\SL_2(\widehat{\Z})$-invariant Schwartz functions $(\varphi_0,\Xi_0)$.

After analyzing the splitting behavior of unramified primes in $M_{32}$, we show the Whittaker functions of $\theta_{\wk_4,\chi}(\gb,\phi_0)$ and $I(\hb,\varphi_0,\Xi_0,\rho)$ are the same at almost all finite places.
Then the strong multiplicity one theorem for Hilbert modular forms implies that they generate the same cuspidal automorphic representation.
This means the base change of Jacquet-Langlands correspondence from $\Q$ for $\rho$ to $\wf_2$ for $\chi$ is an isomorhpism, and twisted Siegel-Weil formulas follow immediately.
Note that instead of using principal series, an irreducible representation of $\SL_2$, to prove Siegel-Weil formulas, we use Jacquet-Langlands correspondence for automorphic representations of $\GL_2$ to construct twisted Siegel-Weil formulas.

In the spirit of Kudla's integral of Borhcerds forms \cite{Kudla03}, we formulate the twisted integral of Borcherds forms against the character $\chi$. 
Using the idea of \cite[Proposition 4.7]{BLY22}, we calculate the Whittaker functions of the Doi-Naganuma lift of deformed theta integrals and apply Stokes' theorem to show that the twisted integral of Borcherds forms is explicitly described by the Fourier coefficients of twisted theta integrals against $\chi$.
Note that, in \cite{Kudla03}, the integral of Borcherds forms concerns the coefficients of Eisenstein series. However, on $\SL_2(\A_{\wf_2})$, they possess one same property, namely, the Whittaker coefficients lies in $\Q(\phi)$, the field generated by the values of the associated Schwartz function $\phi$. Similarly as in \cite{BKY12}, we express twisted CM values of Borcherds forms as twisted integrals of Borcherds forms, and obtain twisted CM value formulas using the Fourier coefficients of twisted theta integrals under certain assumptions.

\subsection*{Outlook and generalization}

In this paper, we only consider the base change of Jacquet-Langlands correspondence for the special characters $\chi$ and $\rho$ as in Therem \ref{twisted sw formula}. More generally, we expect the base change is an isomorphism for any pair of characters $(\chi,\rho)$ satisfying Lemma \ref{matching central characters}, where $\chi$ determines the value of $\rho$ on a subgroup of $\A_{F}^\times$. 
It would also be interesting to study the behavior of $L$-functions and $\epsilon$ factors under the base change by Doi-Naganuma lift.
More generally, we can consider the twisted CM values of theta lift of harmonic Maass forms and explore the twisted Rankin-Selberg type $L$-functions.
We plan to address this in the near future.

As seen in section 6, the twisted CM values rely on the choice of the isomorphism between the finite adelic points of $W^+$ and $W^-$ unlike the case of CM values \cite[Lemma 4.2]{BKY12} due to the Siegel section map. 
To prove the conjecture in \cite{Li16}, we need to know more about the behavior of twisted CM values with respect to such isomorphisms and the difference between the twisted CM cycles constructed by torus and ideal class groups.
Note that the numerical examples in \cite{Li16} coincide with Theorem \ref{tsw introduction}.

In the work of \cite{LZ25}, Li and the author applied invariant vectors to match the Hilbert Eisenstein series at all places. 
However, at ramified places, the invariant vectors don't work in our setting for twisted Siegel-Weil formulas by Proposition \ref{value T1 ramified}.
It needs some new ideas to explicitly match twisted theta integrals.
\\

This paper is organized as follows. 
In section 2, we introduce the extension of numbers fields related to non-Galois quartic CM fields and construct the characters appearing later in the twisted Siegel-Weil formulas. 
Section 3 contains the Borcherds forms on orthogonal Shimura varieties and defines the twisted CM cycles. 
Section 4 calculates the Whittaker functions of twisted theta integrals and Doi-Naganuma lift of Hecke's integrals.
Section 5 matches the local Whittaker functions and proves the twisted Siegel-Weil formulas. Finally, in section 6, we use twisted Siegel-Weil formulas to study the twisted CM values of Borcherds forms.

\subsection*{Acknowledgement}
The author would like to thank Yingkun Li for introducing this project and many helpful suggestions. 
The author also thanks 
Wee Teck Gan for explaining the (irreducibility of) global theta correspondence, and Jan Bruinier, Tuoping Du, Justin Trias for useful discussions.
The author is supported by the Deutsche Forschungsgemeinschaft (DFG) through the Collaborative Research Centre TRR 326 ``Geometry and Arithmetic of Uniformized Structures'' (project number 444845124).

\section{Preliminaries}

\subsection{Extension of number fields}
Let $F_2 =\Q(\sqrt{D_2})$, $D_2\in \N$, be a real quadratic number field. 
Denote by $\sigma$ the non-trivial element in the Galois group $\Gal (F_2/\Q)$.
Let $\alpha = a+b\sqrt{D_2} \in \Oc_{F_2}$ be a totally negative element and $K_4 = F_2 (\sqrt{\alpha})$ be a totally imaginary quadratic extension of $F_2$.
Depending on $\alpha$, $K_4$, as a quartic CM field, might be biquadratic, cyclic or non-Galois \cite[Lemma 4.1]{YY06}. 

In this paper, we assume that $K_4$ is non-Galois over $\Q$. This means $F_2(\sqrt{\alpha}) \neq F_2 (\sqrt{\sigma(\alpha)})$, and $\Q(\sqrt{\alpha\sigma(\alpha)})$ is a real quadratic number field different from $F_2$.
Let $M_8 = \Q (\sqrt{\alpha},\sqrt{\sigma(\alpha)})$. Then $M_8$ is normal and hence the Galois closure of $K_4$.
Moreover, $M_8=K_4 (\sqrt{\alpha\sigma(\alpha)})$ and $\deg M_8 = 8$.
By a slightly abuse of notation, let $\sigma$ and $\tau$ be the automorphisms that map $\left(\sqrt{\alpha}, \sqrt{\sigma(\alpha)}\right)$ to $\left(\sqrt{\sigma(\alpha)},-\sqrt{\alpha}\right)$ and $\left(\sqrt{\sigma(\alpha)}, \sqrt{\alpha}\right)$ respectively. Then 
\begin{align*}
\Gal (M_8/\Q) \cong \langle \sigma,\tau \mid \sigma^4=\tau^2=1, \tau \sigma=\sigma^3 \tau\rangle.
\end{align*}
Note that $\sigma^2$ is the complex conjugation and $K_4$ is the subfield of $M_8$ fixed by $\{1, \tau \sigma\}$. 
Let $F$ be the fixed subfield of $\langle \sigma \rangle$ in $M_{8}$ with $\Gal (F/\Q)=\langle \tau \rangle$. 
In the following, if it's clear in the context, we will use $'$ to denote the non-trivial element in the Galois group of $\wf_2,F_2,F$ over $\Q$.

The CM type $\Phi := \{ 1, \sigma\}$ of $K_4$ is primitive and the associated reflex field is $\wk_4 := \Q (\sqrt{\alpha}+\sqrt{\sigma(\alpha)})$ with reflex CM type $\widetilde{\Phi} = \{ 1, \sigma^{-1} \}$.
And the totally real subfield of $\wk_4$ is $\wf_2 := \Q(\sqrt{D})$, where $D=\alpha\sigma(\alpha)\in\N$.
The type norm map is
\begin{align*}
    \Nm_{\Phi}: K_4 \rightarrow \wk_4 : x\mapsto \prod_{\phi\in\Phi}\phi(x)
\end{align*}
and induces a homomorphism on class groups
\begin{align*}
    \Nm_{\Phi}: \Cl(K_4) \rightarrow \Cl (\wk_4) : \mathfrak{a} \mapsto \mathfrak{a}^\prime,
\end{align*}
where $\mathfrak{a}^\prime\mathcal{O}_{M_8} = \prod_{\phi\in\Phi}\phi(\mathfrak{a})\mathcal{O}_{M_8}$.
The reflex norm map $\Nm_{\widetilde{\Phi}} : \wk_4 \rightarrow K_4$ is defined similarly.

To consider characters motivated by \cite{Li16}, suppose $M_{32}$ is a degree 4 Galois extension of $M_8$ such that
\begin{align}\label{ext cond}
\Gal (M_{32}/\Q) \cong (\Z/4\Z)^2\rtimes \Z/2\Z \cong \{ (a,b,c) : a,b\in \Z/4\Z,c\in\Z/2\Z \},
\end{align}
where the action of $(0,0,1)$ on $(a,b,c)\in \Gal (M_{32}/\Q)$ is given by
\begin{align*}
(0,0,1)(a,b,c)=(b,a,c+1).
\end{align*}
We choose representatives of $\sigma$ and $\tau$ as $(1,0,0)$ and $(1,0,1)$ respectively. 
As a quotient of $\Gal(M_{32}/\Q)$, 
\begin{align*}
\Gal (M_8/\Q) \cong \langle (1,0,0),(1,0,1) \rangle \cong \{ (a,0,c) : a\in \Z/4\Z,c\in\Z/2\Z \}.
\end{align*}
Let $K_8$ be the fixed subfield of the non-normal subgroup $\langle (1,0,0) \rangle$ in $M_{32}$. 
Then $F\subset K_8$ and $K_8/\Q$ is a non-Galois extension. 
Moreover, $M_{32}$ can be constructed from $K_8$ by the proof of \cite[Proposition 2.1]{Li16}.
Indeed, if $K_8 \cong F[X] /\left(f(X)\right)$, then 
\begin{align*}
M_{32} \cong F[X, Y] /\left(f(X), \tau(f)(Y)\right).
\end{align*}
In summary, we have the following diagram of extension of number fields.

\[\xymatrix{
& & M_{32}\ar@{-}[ld]|-{(2,2,0)} \ar@{-}[dr]|-{(2,0,0)}  &   \\
& M_{16}  \ar@{-}[d]|-{(1,1,0)} &   &    K_{16} \ar@{-}[d]|-{(1,0,0)} \\
 & M_8 \ar@{-}[d]|-{(1,1,1)} \ar@{-}[ld]|-{(1,0,1)} \ar@{-}[rd]|-{(2,0,0)} &   & K_8 \ar@{-}[d]|-{(0,2,0)} \\
\wk_4 \ar@{-}[d]|-{(2,0,0)} & K_4 \ar@{-}[d]|-{(2,0,0)}  & K \ar@{-}[d]|-{(1,0,0)}  & F_4 \ar@{-}[dl]|-{(0,1,0)} \\
\wf_2 \ar@{-}[rd]|-{(1,0,0)} & F_2 \ar@{-}[d]|-{(1,0,0)}  & F \ar@{-}[ld]|-{(1,0,1)} & \\
& \Q  & & \\
}\]

Let $L\subset M_{32}$ be a subfield and $\fp\subset L$ a prime ideal.
For another subfield $N\subset M_{32}$ containing $L$ and a prime ideal $\mfp \subset N$ lying over $\fp$, denote $\bfrob_\mfp^L \in \Gal(N/L)$ as the image of $\mfp$ under the Artin map when $\fp$ is unramified in $N$. 
Furthermore, if $N/L$ is abelian, set $\frob_\fp^N \in \Gal(N/L)$ as the image of $\fp$ under the Artin map.
If $N=M_{32}$ or $L=\Q$, we will omit $N$ or $L$ from the notation $\frob_\fp^N$ or $\bfrob_\mfp^L$ respectively.

The splitting behavior of a prime $p\in\Q$ in $F,\wf_2,\wk_4$ is given as follows.
Suppose $p$ is unramified in $M_8$.
Let $\mfp \subset M_{8}$ be a prime ideal lying over $p$. 

If $\bfrob_\mfp = (a,0,0)$, then $p=\mathfrak{q}_1\mathfrak{q}_2$ is split in $F$, and $\frob_{\mathfrak{q}_1}^{M_8}=(a,0,0),\frob_{\mathfrak{q}_2}^{M_8}=(-a,0,0)$;
\\
I.(1) If $a=0$, then $p=\mathfrak{p}_1\mathfrak{p}_2$ is split in $\wf_2$ and $\frob_{\mathfrak{p}_1}^{M_8}=\frob_{\mathfrak{p}_2}^{M_8}=(0,0,0)$;
$\mathfrak{p}_1=\mathfrak{P}_{11}\mathfrak{P}_{12},\mathfrak{p}_2=\mathfrak{P}_{21}\mathfrak{P}_{22}$ are split in $\wk_4$ and $\frob_{\mathfrak{P}_{ij}}^{M_8}=(0,0,0)$, $i,j=1,2$.
\\
I.(2) If $a=2$, then $p=\mathfrak{p}_1\mathfrak{p}_2$ is split in $\wf_2$ and $\frob_{\mathfrak{p}_1}^{M_8}=\frob_{\mathfrak{p}_2}^{M_8}=(2,0,0)$;
$\mathfrak{p}_1=\mathfrak{P}_{1},\mathfrak{p}_2=\mathfrak{P}_{2}$ are inert in $\wk_4$ and $\frob_{\mathfrak{P}_{i}}^{M_8}=(0,0,0)$, $i=1,2$.
\\
I.(3) If $a=1,3$, then $p=\mathfrak{p}$ is inert in $\wf_2$ and $\frob_{\mathfrak{p}}^{M_8}=(2,0,0)$;
$\mathfrak{p}=\widetilde{\mathfrak{P}}$ is inert in $\wk_4$ and $\frob_{\widetilde{\mathfrak{P}}}^{M_8}=(0,0,0)$.

If $\bfrob_\mfp = (a,0,1)$, then $p=\mathfrak{q}$ is inert in $F$ and $\frob_{\mathfrak{q}}^{M_8}=(0,0,0)$; 
\\
II.(1) If $a = 0,2$, then $p=\mathfrak{p}$ is inert in $\wf_2$ and $\fp = \mfp_1\mfp_2$ is split in $\wk_4$ with $\frob_{\mathfrak{P}_{i}}^{M_8}=(0,0,0)$, $i=1,2$. 
\\
II.(2) If $a=1$, then $p=\mathfrak{p}_1\mathfrak{p}_2$ is split in $\wf_2$ and $\frob_{\mathfrak{p}_1}^{M_8}=(1,0,1),\frob_{\mathfrak{p}_2}^{M_8}=(3,0,1)$.
$\mathfrak{p}_1=\mathfrak{P}_{11}\mathfrak{P}_{12}$ is split in $\wk_4$ and $\frob_{\mathfrak{P}_{11}}^{M_8}=\frob_{\mathfrak{P}_{12}}^{M_8}=(1,0,1)$; 
$\mathfrak{p}_2=\mathfrak{P}_{2}$ is inert in $\wk_4$ and $\frob_{\mathfrak{P}_{2}}^{M_8}=(0,0,0)$.
\\
II.(3) If $a = 3$, then $p=\mathfrak{p}_1\mathfrak{p}_2$ is split in $\wf_2$ and $\frob_{\mathfrak{p}_1}^{M_8}=(3,0,1),\frob_{\mathfrak{p}_2}^{M_8}=(1,0,1)$.
$\mathfrak{p}_2=\mathfrak{P}_{21}\mathfrak{P}_{22}$ is split in $\wk_4$ and $\frob_{\mathfrak{P}_{21}}^{M_8}=\frob_{\mathfrak{P}_{22}}^{M_8}=(1,0,1)$; 
$\mathfrak{p}_1=\mathfrak{P}_{1}$ is inert in $\wk_4$ and $\frob_{\mathfrak{P}_{1}}^{M_8}=(0,0,0)$.

\subsection{Characters}
To construct twisted Siegel-Weil formulas, we consider characters as in \cite{Li16}.
Let the character 
\begin{align*}
\psi : \Gal(M_{32}/\wk_4) \cong \langle \tau \rangle \rightarrow \C^\times : \tau \mapsto \zeta_8=\exp (\frac{\pi}{4}i). 
\end{align*}
Then $\Ind_{\wk_4}^{\wf_2} \psi \cong \Res^{\Q}_{\wf_2}\rho_\varphi $, see also \cite[Equation (4.3)]{Li16}, where the character
\begin{align*}
\varphi: \operatorname{Gal}\left(M_{32} / F\right) & \longrightarrow \mathbb{C}^{\times},\ (a, b, 0)  \mapsto e^{\pi i b / 2},
\end{align*}
and $\rho_\varphi := \Ind_F^\Q \varphi$ is the induced irreducible representation of $\Gal (M_{32}/\Q)$. 
The restriction of $\rho_\varphi$ to $\Gal (M_{32}/\wk_4)$ is reducible and can be decomposed as
\begin{align*}
\rho_\varphi \mid_{\Gal (M_{32}/\wk_4)} \cong \psi \oplus \widetilde{\psi},
\end{align*}
where $\widetilde{\psi}$ is a character on $\Gal (M_{32}/\wk_4)$ satisfying $\widetilde{\psi}(\tau)=\psi (\tau^5)$. 
The following lemma will be used to define twisted theta integrals and twisted CM cycles later.
For any subfield $L\subset M_{32}$, let $\Art_{L} : \A^\times_{L}\rightarrow \Gal (M_{32}/L)$ be the Artin map.

\begin{lemma}\label{wk4 character}
Denote $\wk_4^1$ by the kernel of $\Nm_{\wk_4/\wf_2}$ and $\A_{\wk_4^1} := \A_\Q \otimes \wk_4^1$.
Then the composition 
\begin{align*}
\chi: \A_{\wk_4^1} \overset{\Art_{\wk_4}}{\longrightarrow} \Gal(M_{32}/\wk_4) \overset{\psi}{\longrightarrow} \C^\times
\end{align*}
is a quadratic character.
\end{lemma}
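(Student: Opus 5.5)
The plan is to write every norm-one idele as a ``relative quotient'' $x/\bar x$, and then use the fact that conjugation by a lift of complex conjugation acts on the cyclic group $\Gal(M_{32}/\wk_4)$ as $t\mapsto t^5$. Combined with $\psi(\tau)=\zeta_8$, this forces $\chi$ to take values in $\{\pm1\}$.

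\emph{Step 1: Hilbert 90.} Let $h\in\A_{\wk_4^1}$, write $\bar{\cdot}$ for the nontrivial automorphism of $\wk_4/\wf_2$, and let $v$ be a place of $\wf_2$.
\begin{itemize}
\item By local Hilbert 90 for $\wk_4\otimes_{\wf_2}\wf_{2,v}$ over $\wf_{2,v}$ (which also covers the split case), we can write $h_v=x_v/\bar x_v$.
\item For almost all $v$, the extension is unramified and $h_v$ is a unit. There we may take $x_v$ to be a unit, since norm-one units in an unramified extension are quotients of units.
\end{itemize}
Hence $h=x/\bar x$ with $x\in\A_{\wk_4}^\times$. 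The character $\chi$ is continuous and trivial on $\wk_4^1$, because $\Art_{\wk_4}$ kills $\wk_4^\times$. So it suffices to compute $\psi(\Art_{\wk_4}(x))\,\psi(\Art_{\wk_4}(\bar x))^{-1}$.

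\emph{Step 2: functoriality of the Artin map.} Let $c\in\Gal(M_{32}/\wf_2)$ be any lift of the nontrivial element of $\Gal(\wk_4/\wf_2)$. Since $\Gal(M_{32}/\wk_4)$ is abelian, functoriality gives $\Art_{\wk_4}(\bar x)=c\,\Art_{\wk_4}(x)\,c^{-1}$. Writing $g=\Art_{\wk_4}(x)$, we get
\begin{align*}
\chi(h)=\psi(g)\,\psi(cgc^{-1})^{-1}.
\end{align*}

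\emph{Step 3: the conjugation action (the main point).} We use $\Ind_{\wk_4}^{\wf_2}\psi\cong\Res^{\Q}_{\wf_2}\rho_\varphi$ together with $\rho_\varphi|_{\Gal(M_{32}/\wk_4)}\cong\psi\oplus\widetilde\psi$.
\begin{itemize}
\item By Mackey's formula for the index-two normal subgroup $\Gal(M_{32}/\wk_4)\subset\Gal(M_{32}/\wf_2)$, the restriction of $\Ind_{\wk_4}^{\wf_2}\psi$ to $\Gal(M_{32}/\wk_4)$ is $\psi\oplus\psi^{c}$, where $\psi^c(t)=\psi(ctc^{-1})$.
\item Comparing the two decompositions gives $\psi^c\in\{\psi,\widetilde\psi\}$.
\item $\psi^c=\psi$ is impossible: $\rho_\varphi$ is irreducible, so its restriction to $\Gal(M_{32}/\wf_2)$, an induced representation of dimension two, is irreducible. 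This is the step I would double-check, by verifying irreducibility directly via $\psi\neq\widetilde\psi$. Then Clifford theory forces $\psi^c\neq\psi$.
\item Therefore $\psi^c=\widetilde\psi$, i.e. $\psi(ctc^{-1})=\psi(t^5)$ for all $t$.
\end{itemize}
Since $\psi$ is faithful on the cyclic group $\langle\tau\rangle\cong\Z/8\Z$, this means $c\tau c^{-1}=\tau^5$.

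\emph{Step 4: conclusion.} From Steps 2 and 3,
\begin{align*}
\chi(h)=\psi(g)^{1-5}=\psi(g)^{-4}\in\{\pm1\},
\end{align*}
because $\psi(g)$ is an eighth root of unity. Hence $\chi^2=1$.

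To see that $\chi$ is genuinely quadratic rather than trivial, use that $\Art_{\wk_4}$ is surjective onto the abelian group $\Gal(M_{32}/\wk_4)$ by class field theory. Choose $x$ with $\Art_{\wk_4}(x)=\tau$ and set $h=x/\bar x\in\A_{\wk_4^1}$. Then $\chi(h)=\zeta_8^{-4}=-1$.
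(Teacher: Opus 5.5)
Your proof is correct, but it is organized differently from the paper's.

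\textbf{The paper's route.} It works place by place. It asserts that local norm-one elements at places of $\wf_2$ inert or ramified in $\wk_4$ have trivial Artin image. At a split place $\mathfrak{p}=\mathfrak{P}_1\mathfrak{P}_2$ it computes $\frob_{\mathfrak{P}_1}^n\frob_{\mathfrak{P}_2}^{-n}=\tau^{rn}(\sigma^2\tau^{rn}\sigma^2)^{-1}=\tau^{4rn}$. This tacitly uses that complex conjugation acts on $\Gal(M_{32}/\wk_4)$ by $\tau\mapsto\tau^5$. Surjectivity of the Artin map then gives image exactly $\langle\tau^4\rangle$.

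\textbf{Your route.} You write every norm-one idele as $x/\bar x$ via Hilbert 90 and apply functoriality of the global Artin map. This gives the uniform formula $\chi(x/\bar x)=\psi(\Art_{\wk_4}(x))^{-4}$. The essential input is the same conjugation relation. You extract it from $\rho_\varphi|_{\Gal(M_{32}/\wk_4)}\cong\psi\oplus\widetilde\psi$ and Mackey. It can also be read off the group law directly: $(2,0,0)(1,0,1)(2,0,0)=(3,2,1)=(1,0,1)^5$.

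\textbf{What each buys.} Your route is uniform: there is no case distinction, and it covers places ramified in $M_{32}/\wk_4$ as well as the unit components $(u,u^{-1})$ at split places. The paper's local computation does not really address these. Its triviality claim at inert or ramified places, and its omission of $(u,u^{-1})$, both tacitly use that the place is unramified in $M_{32}/\wk_4$. The paper's route, in turn, directly produces the local values $\chi_{\mathfrak{p}}(\varpi^n,\varpi^{-n})=(-1)^{rn}$ used later in Proposition~\ref{value T1 K4}. Your formula recovers these at once with $x=(\varpi^n,1)$.

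\textbf{The step you flagged in Step 3.} It is indeed invalid as stated, since restricting an irreducible representation to a subgroup need not preserve irreducibility. It is also unnecessary. Equality of the multisets $\{\psi,\psi^c\}=\{\psi,\widetilde\psi\}$, together with $\widetilde\psi(\tau)=\zeta_8^5\neq\zeta_8$, already forces $\psi^c=\widetilde\psi$.
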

\begin{proof}
Let $\mathfrak{p} \subset\wf_2$ be a prime ideal.
If $\mathfrak{p}$ is inert or ramified in $\wk_4$, the image $\Art_{\wk_4}(\A_{\wk_4^1,\mathfrak{p}})$ in $\Gal(M_{32}/\wk_4)$ is trirvial.
When $\mathfrak{p}\Oc_{\wk_4} = \mathfrak{P}_1\mathfrak{P}_2$ is split in $\wk_4$, we denote by $\varpi$ the uniformizer of $\wf_{2,\mathfrak{p}}$. 
Then every element in $\A_{\wk_4^1,\mathfrak{p}}$ can be written as $(\varpi ^nu,\varpi^{-n} u^{-1})$, $u\in \Oc_{\wf_2,\mathfrak{p}}^\times,n\in\Z$. Suppose the image of $\mathfrak{P}_1$ under the Artin map is $\frob_{\mathfrak{P}_1} = \tau^r$, $r\in\Z/8\Z$. Then
\begin{align*}
\Art_{\wk_4}(\varpi ^nu,\varpi^{-n} u^{-1})=\frob_{\mathfrak{P}_1}^n \frob_{\mathfrak{P}_2}^{-n} = \tau^{rn} (\sigma^2\tau^{rn} \sigma^2)^{-1} = \tau^{4rn}.
\end{align*}
Since the Artin map on $\A_{\wk_4}^\times$ is surjective, we have $\Art_{\wk_4}(\A_{\wk_4^1}) = \langle \tau^4 \rangle \cong 4\Z/8\Z$ as a subgroup of $\Gal(M_{32}/\wk_4)$. Hence $\chi$ is a quadratic character.
\end{proof}

From the proof, it's easy to see that the image of $\A_{\wk_4^1} \overset{\Art_{\wk_4}}{\longrightarrow} \Gal(M_{32}/\wk_4)$ is contained in $\Gal (M_{32}/F)$. This enables us to find a character $\rho$ on $\A_F^\times$ satisfying the following conditions, which will be used to establish twisted Siegel-Weil formulas.
Let $\chi_{\wk_4/\wf_2}, \chi_{\wf_2}=\chi_{\wf_2/\Q}$ and $\chi_F=\chi_{F/\Q}$ be the Hecke characters associated to the quadratic extensions $\wk_4/\wf_2, \wf_2/\Q$ and $F/\Q$ respectively.

\begin{lemma}\label{matching central characters}
Define the character $\rho : \A_F^\times \overset{\Art_{F}}{\longrightarrow} \Gal(M_{32}/F) \overset{\eta}{\longrightarrow} \C^\times$, where
\begin{align*}
\eta (1,0,0)=-1, \quad \eta(0,1,0)= i. 
\end{align*}
Then, for $\alpha\in \A_{\wf_2}^\times$ and $a=\Nm_{\wf_2/\Q} \alpha$, we have
\begin{align*}
(\chi_{\wk_4/\wf_2} \psi)(\alpha)
&=(\chi_{\wf_2/\Q}\chi_{F/\Q}\rho^{-1})(a) .
\end{align*}
\end{lemma}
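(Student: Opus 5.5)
We will prove the identity by class field theory, checking it on Frobenius elements at unramified primes.

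\textbf{Reduction.} Both sides are finite-order Hecke characters of $\wf_2^\times\backslash\A_{\wf_2}^\times$.
\begin{itemize}
\item On the left, $\psi(\alpha)$ means $\psi(\Art_{\wk_4}(\alpha))$ for $\alpha\in\A_{\wf_2}^\times\subset\A_{\wk_4}^\times$. This equals $\psi$ composed with the transfer $\mathrm{Ver}\colon\Gal(M_{32}/\wf_2)^{ab}\to\Gal(M_{32}/\wk_4)$, applied to $\Art_{\wf_2}(\alpha)$.
\item On the right, $\chi_{\wf_2}$ and $\chi_F$ evaluated at $a=\Nm_{\wf_2/\Q}\alpha$ are the quadratic characters of $\Gal(M_{32}/\Q)$ cutting out $\wf_2$ and $F$, evaluated at the restriction of $\Art_{\wf_2}(\alpha)$.
\item Also on the right, $\rho(a)=\eta(\Art_F(a))$, and $\Art_F(a)$ is the transfer $\Gal(M_{32}/\Q)^{ab}\to\Gal(M_{32}/F)$ applied to $\Art_\Q(a)$.
\end{itemize}
So both sides factor through $\Gal(M_{32}/\wf_2)$. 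Two such characters coincide as soon as they agree on $\frob^{M_{32}}_{\fp}$ for all primes $\fp$ of $\wf_2$ unramified in $M_{32}$, by Chebotarev density. Equivalently, two Hecke characters agreeing at almost all places are equal.

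\textbf{Local checks.} I will verify the identity at each unramified $\fp$ with $\alpha=\varpi_\fp$, splitting according to the cases I.(1)--(3) and II.(1)--(3). For each case I compute four quantities.
\begin{itemize}
\item $\chi_{\wk_4/\wf_2}(\varpi_\fp)$: this is $+1$ or $-1$ according as $\fp$ is split or inert in $\wk_4$.
\item $\psi(\varpi_\fp)$: this is $\psi(\frob_{\mfp_1}\frob_{\mfp_2})=\psi(\tau^{r}\cdot\sigma^2\tau^{r}\sigma^{2})=\psi(\tau^{5r})\psi(\tau^{r})$ when $\fp$ splits, as in the proof of Lemma~\ref{wk4 character}. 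It is $\psi(\frob_{\widetilde{\mfp}})$ when $\fp$ is inert.
\item $\chi_{\wf_2}(\Nm\fp)$ and $\chi_F(\Nm\fp)$: these are read off from whether $p$ or $p^2$ appears and from the splitting of $p$ in $\wf_2$ and $F$.
\item $\rho(\Nm\fp)$: here $\Nm\fp=p$ or $p^2$, and $\rho(p)=\eta(\frob_{\fq_1}\frob_{\fq_2})$ when $p$ splits in $F$, while $\rho(p)=\eta(\frob_{\fq})$ when $p$ is inert in $F$.
\end{itemize}
To carry this out, I lift each Frobenius from $M_8$ to $M_{32}$: an element $(a,0,c)$ lifts to some $(a,b,c)$. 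I then use the multiplication law $(0,0,1)(a,b,c)=(b,a,c+1)$ to compute products such as $\frob_{\fq_1}\frob_{\fq_2}$, which is a product of two conjugates. Since the values of $\eta$ and $\psi$ are prescribed on $(1,0,0)$, $(0,1,0)$ and $\tau=(1,0,1)$, each case becomes an explicit identity among $4$th and $8$th roots of unity.

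\textbf{Group-theoretic alternative.} One could instead check the identity on the whole group. Compute $\mathrm{Ver}$ on generators of $\Gal(M_{32}/\wf_2)$, which has order $16$ and contains $\tau$ and $(2,0,0)$. Compute $\mathrm{Ver}_{\Q\to F}$ on $\Gal(M_{32}/\Q)^{ab}$ using the coset representatives $\{1,(0,0,1)\}$, which gives $g\mapsto g\cdot(0,0,1)g(0,0,1)^{-1}$ on suitable elements. Then compare the two sides directly. I will do whichever computation is cleaner, but the Frobenius-table approach reuses the data already listed in the paper.

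\textbf{Main obstacle.} The hardest part is the bookkeeping of the $b$-coordinate. The table records Frobenius only in $\Gal(M_8/\Q)$, but $\eta(0,1,0)=i$ and $\psi$ see the lift to $M_{32}$. The identity must therefore hold independently of the unknown $b$. I expect the $b$-dependence to cancel because each side involves a product of Frobenius elements over conjugate primes, which has the symmetric form $(a+b,a+b,\ast)$ or $(2a,2b,0)$. I would first verify this cancellation carefully in case II.(2), where $\fp_1$ and $\fp_2$ behave differently in $\wk_4$.
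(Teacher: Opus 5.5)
Your reduction is the paper's route: both sides are characters of $\Gal(M_{32}/\wf_2)^{ab}$ pulled back by $\Art_{\wf_2}$, so it suffices to compare them at primes unramified in $M_{32}$, prime by prime.

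\textbf{The lifting step is wrong as stated.} We have $\Gal(M_{32}/M_8)=\langle(1,1,0)\rangle=\{(d,d,0)\}$; it must contain $\tau^2=(1,1,0)$, because $\tau$ has order $2$ in $\Gal(M_8/\Q)$. So the lifts of the class the paper writes as $(a,0,c)$ are $(a+d,d,c)$ with $d\in\Z/4\Z$, not $(a,b,c)$. The set $\{(0,b,0)\}$ is not even normal, since conjugating by $(0,0,1)$ swaps the coordinates. Taken literally, your recipe would put $(0,1,0)$ above a prime of type I.(1). But $(0,1,0)\notin\Gal(M_{32}/\wf_2)$, and its image in $\Gal(M_8/\Q)$ is the class of $(3,0,0)$, i.e.\ type I.(3). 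You would therefore be testing the identity on the wrong group elements.

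Also, the dependence on the lift does not cancel on either side. At a degree-one prime of $\wf_2$ with Frobenius $\tau^{2d}=(d,d,0)$ (type I.(1)), both sides equal $(-1)^d$. What must be shown is that they agree for every $d$. The paper avoids this problem by parametrizing $\bfrob^{\wf_2}_{\mfp_1}=(1,0,0)^a(1,0,1)^b$ directly inside $\Gal(M_{32}/\wf_2)$.

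\textbf{Your transfer alternative avoids this and closes the argument, so make it the proof.} Put $s=(2,0,0)$. Then $\Gal(M_{32}/\wf_2)=\langle\tau,s\rangle$, with $s^2=1$ and $s\tau s^{-1}=\tau^5$.

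On the left side, the transfer to $\langle\tau\rangle=\Gal(M_{32}/\wk_4)$ sends $\tau\mapsto\tau\cdot\tau^5=\tau^6$ and $s\mapsto s^2=1$. Hence the left side is $\zeta_8^6=-i$ at $\tau$, and $\chi_{\wk_4/\wf_2}(s)=-1$ at $s$.

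On the right side, the transfer from $\Gal(M_{32}/\Q)$ to $\Gal(M_{32}/F)=\{(x,y,0)\}$ sends $(x,y,0)\mapsto(x+y,x+y,0)$ and $g\mapsto g^2$ otherwise. So $\tau\mapsto(1,1,0)$ and $s\mapsto(2,2,0)$, with $\eta$-values $-i$ and $-1$. Moreover $\chi_{\wf_2}\circ\Nm_{\wf_2/\Q}\equiv1$, $\chi_F(\tau)=-1$ and $\chi_F(s)=1$. Hence the right side is $(-1)\cdot i=-i$ at $\tau$ and $-1$ at $s$, matching the left side on both generators.

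Compared with the paper, this checks the identity on the whole group at once: no Chebotarev, no split/inert case distinction, and no case left as ``similar''. The paper's Frobenius computations have the compensating advantage of producing the local data reused in Lemma~\ref{matching unramified}.
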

\begin{proof}
Since both sides are Hecke characters on $\A_{\wf_2}^\times$, it suffices to prove the claim for all prime ideals $\fp \subset \wf_2$ lying over rational primes $p$ unramified in $M_{32}$.
We prove the equality when $p = \fp_1\fp_2$ is split in $\wf_2$, and the inert case can be considered similarly. 

Let $\mfp_1 \subset M_{32}$ be any prime ideal lying over the prime ideal $\fp_1$.
Denote by $\bfrob_{\mfp_1}^{\wf_2} = (1,0,0)^a(1,0,1)^b\in\Gal (M_{32}/\wf_2)$ the image of $\mfp_1$ under the Artin map, $a\in\Z/4\Z,b\in\Z/8\Z$.
Then $\mfp_2 := \sigma(\mfp_1)$ is a prime ideal lying over $\fp_2$ and $\bfrob_{\mfp_2}^{\wf_2} = (1,0,0)^a(1,0,1)^b$ if $b$ is even and $\bfrob_{\mfp_2}^{\wf_2} = (1,0,0)^{a+2} (1,0,1)^{5b-6}$ otherwise.

Case $2\mid b$, $4\mid a$: In this case, $p\Oc_{F}=\fq_1\fq_2$ is split in $F$, and $\fp_i = \mfp_{i1} \mfp_{i2}$ is also split in $\wk_4$, $i=1,2$. Moreover, $\frob_{\fq_1} = \frob_{\fq_2} = (1,0,1)^{b}$, and $\frob_{\mfp_{i1}} = (1,0,1)^b, \frob_{\mfp_{i2}} = (1,0,1)^{5b}$. Then
\begin{align*}
&(\chi_{\wk_4/\wf_2} \psi)(\alpha) = \psi(\alpha) = \psi ( (1,0,1)^b (1,0,1)^{5b})^{v_{\fp_1}(\alpha) + v_{\fp_2}(\alpha)} 
= i^{b(v_{\fp_1}(\alpha) + v_{\fp_2}(\alpha))}
\end{align*}
and
\begin{align*}
& (\chi_{\wf_2/\Q}\chi_{F/\Q}\rho^{-1})(a) =  \rho^{-1}(a) = \rho^{-1} ( (1,0,1)^b (1,0,1)^{b})^{(v_{\fp_1}(\alpha) + v_{\fp_2}(\alpha))} = i^{b(v_{\fp_1}(\alpha) + v_{\fp_2}(\alpha))}
\end{align*}
are the same.

Case $2\mid b$, $4\nmid a,2\mid a$: In this case, $p\Oc_{F}=\fq_1\fq_2$ is split $F$, and $\fp_i = \widetilde{\mfp}_i$ is inert in $\wk_4$, $i=1,2$. Moreover, $\frob_{\fq_1} = \frob_{\fq_2} = (1,0,0)^a (1,0,1)^b$, and $\frob_{\widetilde{\mfp}_{i}} = (b,b,0)$. Then
\begin{align*}
&(\chi_{\wk_4/\wf_2} \psi)(\alpha) 
=(-1)^{(v_{\fp_1}(\alpha) + v_{\fp_2}(\alpha))} i^{b(v_{\fp_1}(\alpha) + v_{\fp_2}(\alpha))}
= (\chi_{\wf_2/\Q}\chi_{F/\Q}\rho^{-1})(a) .
\end{align*}

Case $2\nmid b$, $4\mid a$: In this case, $p\Oc_{F}=\fq$ is inert in $F$ with $\frob_\fq = (1,0,1)^{2b}$. 
$\fp_1 = \mfp_{11}\mfp_{12}$ is split in $\wk_4$ and $\frob_{\mfp_{11}} = (1,0,1)^b, \frob_{\mfp_{12}} = (1,0,1)^{5b}$.
$\fp_2 = \widetilde{\mfp}_2$ is inert in $\wk_4$ and $\frob_{\widetilde{\mfp}_2} = (5b-4,5b-4,0)$.
Then
\begin{align*}
&(\chi_{\wk_4/\wf_2} \psi)(\alpha) 
=(-1)^{v_{\fp_2}(\alpha)} \psi ((1,0,1)^{6b})^{v_{\fp_1}(\alpha)} \psi((5b-4,5b-4,0))^{v_{\fp_2}(\alpha)} 
=(-i)^{b(v_{\fp_1}(\alpha) + v_{\fp_2}(\alpha))}
\end{align*}
and
\begin{align*}
& (\chi_{\wf_2/\Q}\chi_{F/\Q}\rho^{-1})(a)
=(-1)^{v_{\fp_1}(\alpha) + v_{\fp_2}(\alpha)} \rho^{-1} ((b,b,0))^{(v_{\fp_1}(\alpha) + v_{\fp_2}(\alpha))} 
=(-i)^{b(v_{\fp_1}(\alpha) + v_{\fp_2}(\alpha))}
\end{align*}
are equal.

Case $2\nmid b$, $4\nmid a,2\mid a$: In this case, $p\Oc_{F}=\fq$ is inert in $F$ with $\frob_\fq = (1,0,1)^{2a+2b}$. 
$\fp_1 = \widetilde{\mfp}_1$ is inert in $\wk_4$ and $\frob_{\widetilde{\mfp}_2} = (a+b,a+b,0)$.
$\fp_2 = \mfp_{21}\mfp_{22}$ is split in $\wk_4$ and $\frob_{\mfp_{21}} = (1,0,1)^{5b-6}, \frob_{\mfp_{22}} = (1,0,1)^{25b-30}$.
Then
\begin{align*}
&(\chi_{\wk_4/\wf_2} \psi)(\alpha) 
= i^{b(v_{\fp_1}(\alpha) + v_{\fp_2}(\alpha))}
= (\chi_{\wf_2/\Q}\chi_{F/\Q}\rho^{-1})(a)
.
\end{align*}
This finishes the proof.
\end{proof}

At the end of this section, we give a criterion when the restriction of a character on norm one elements is trivial.

\begin{lemma}
Let $L/M/N$ be an extension of number fields and $[M:N]=2$. If 
\begin{equation*}
\chi : M^\times\backslash \A_M^\times \overset{\Art_M}{\longrightarrow} \Gal(L/M) \overset{\eta\mid_M}{\longrightarrow} \C^\times
\end{equation*}
is the restriction of a character $\eta : \Gal(L/N) \rightarrow \C^\times$, then $\chi$ is trivial on $\A_{M^1}$. 
\end{lemma}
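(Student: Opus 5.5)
The plan is to use the compatibility of the Artin map with the norm (transfer), together with the fact that norm-one ideles are norms of elements of the form $x/\bar x$.

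Let $\bar{\cdot}$ denote the nontrivial automorphism of $M/N$, and let $\A_{M^1}$ be the kernel of $\Nm_{M/N}:\A_M^\times\to\A_N^\times$. The reduction is to show that every $h\in\A_{M^1}$ is sent by $\Art_M$ into the kernel of $\eta|_{\Gal(L/M)}$. It suffices to prove this locally, one place of $N$ at a time, since both $\Art_M$ and $\chi$ factor over places.

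The key functoriality is the following: for $x\in\A_M^\times$, the global Artin map satisfies
\[
\Art_N(\Nm_{M/N}x)=\Art_M(x)\big|_{L^{ab,N}}.
\]
More precisely, composing $\Art_M$ with the natural map $\Gal(L/M)^{ab}\to\Gal(L/N)^{ab}$ induced by inclusion gives $\Art_N\circ\Nm_{M/N}$. Since $\eta$ is a character of $\Gal(L/N)$, it factors through $\Gal(L/N)^{ab}$. Hence
\[
\chi(x)=\eta(\Art_M(x))=\eta(\Art_N(\Nm_{M/N}x)).
\]
For $x=h\in\A_{M^1}$ we have $\Nm_{M/N}h=1$, so $\chi(h)=\eta(1)=1$. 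This is the whole argument; no case analysis on splitting is needed.

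The main point to verify carefully is the precise statement of this norm compatibility. It is the standard property of the reciprocity map: for $M\supset N$ and an abelian extension $E/N$, we have $\Art_{EM/M}(x)|_E=\Art_{E/N}(\Nm_{M/N}x)$. We apply it with $E$ the maximal abelian subextension of $L/N$, through which $\eta$ factors. Since $\eta|_{\Gal(L/M)}(g)=\eta(g|_E)$ and $\Art_M(x)|_E=\Art_{E/N}(\Nm x)$, the identity follows. As a sanity check, this matches the computation in the proof of Lemma \ref{wk4 character}: at a split place the two Frobenii are conjugate, so $\eta$ takes the same value on them, and the exponents $n$ and $-n$ cancel. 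At inert or ramified places the local norm-one group is compact, and its image also vanishes by the same formula.
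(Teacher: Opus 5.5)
Your argument is correct, but it takes a different route from the paper.

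\textbf{The paper's route.} The paper argues locally. It reduces to places $\fp$ of $N$ that split in $M$, writes a local norm-one element as $(\varpi^n u,\varpi^{-n}u^{-1})$, and maps it to $\frob_{\mfp_1}^n\frob_{\mfp_2}^{-n}$. It then observes that $\frob_{\mfp_2}$ is conjugate to $\frob_{\mfp_1}$ by a lift $\beta\in\Gal(L/N)$ of the nontrivial element of $\Gal(M/N)$. Since $\eta$ is a character of $\Gal(L/N)$, the two contributions cancel.

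\textbf{Your route.} You use the norm functoriality of global reciprocity: the map $\Gal(L/M)\to\Gal(L/N)^{ab}$ induced by inclusion corresponds to $\Nm_{M/N}$ on idele classes. This gives the stronger identity $\chi=\eta\circ\Art_N\circ\Nm_{M/N}$ on all of $\A_M^\times$, i.e.\ $\chi$ is the base change of a Hecke character of $N$. Vanishing on $\A_{M^1}$ is then immediate.

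\textbf{Comparison.} Your argument treats all places uniformly, and it does not use $[M:N]=2$. This includes places ramified in $L$, non-split places and the unit components. The paper covers these only by remarking that split primes suffice, together with a Frobenius computation that is implicitly unramified. In exchange, the paper's explicit Frobenius computation parallels the proof of Lemma~\ref{wk4 character}. There it shows exactly why the resulting character is nontrivial when $\psi$ is not stable under conjugation by $\Gal(L/N)$.

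\textbf{Minor points.} The group-theoretic map matching $\Nm_{M/N}$ is the one induced by inclusion, not the transfer. The transfer $\Gal(L/N)^{ab}\to\Gal(L/M)^{ab}$ matches the inclusion $N^\times\backslash\A_N^\times\to M^\times\backslash\A_M^\times$. The statement you actually apply is the correct one, so this is only terminology. The Hilbert 90 remark in your first sentence is never used.
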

\begin{proof}
It suffices to consider primes $\fp \subset N$ that are split in $M$.
Suppose $\fp \Oc_{M}=\mfp_1\mfp_2$. Then $\A_{M,\fp}^1\cong N_\fp^\ast : (h,h^{-1})\mapsto h$. 
Let $\varpi$ be a uniformizer of $N_{\mathfrak{p}}$ and write $h=\varpi^n u$, $n\in\Z, u\in \Oc_{N_\fp}^\times$. 
Denote by $\beta$ the non-trivial element in $\Gal(M/N)$, we obtain
$$\chi (h,h^{-1})=\chi (\frob_{\mfp_1}^n \frob_{\mfp_2}^{-n})=\eta (\frob_{\mfp_1}^n \beta \frob_{\mfp_1}^{-n} \beta^{-1} )=1.$$
Therefore, $\chi\mid_{\A_{M^1}}$ is trivial.
\end{proof}

\subsection{Weil representation  for $\GL_2$}\label{weil repn GL2}
For any number field $L$, we denote the ring of integers $\mathcal{O}_{L}$, different $\mathfrak{d}_L = \mathfrak{d}_{L/\Q}$, discriminant $d_L$ and narrow class number $h^+$. 
Let $(V_0,Q_0)$ be a quadratic space over $L$ of even dimension $2n$, $n\in \Z$.
Denote $(\cdot,\cdot)$ as the bilinear form associated to $Q_0$.
Let the character $\psi_L = \psi \circ \tr_{L/\Q}  $, where $\psi$ is the standard additive character on $\A=\A_\Q$ with infinite part given by $\psi_\infty(\cdot) = \e(\cdot) := \exp (2\pi i \cdot)$.
Let $G=\SL_2$.
For any ring $R$, we define the group
\begin{align*}
M(R) &:= \left\{ m(a) = \pmat{a}{}{}{a^{-1}} : a\in R^\times \right\},\quad
N(R) := \left\{ n(b) = \pmat{1}{b}{}{1} : b\in R \right\},\\
T_1 &:=\left\{ d(\alpha) = \pmat{1}{}{}{\alpha} : \alpha \in R^\times  \right\},\quad 
T^1 :=\left\{ t(\alpha) = \pmat{\alpha}{}{}{1} : \alpha \in R^\times  \right\}.
\end{align*}
On the space of Schwartz functions $\Sc(V_0(\A_L))$, there is the Schrödinger model of the Weil representation $\omega = \omega_{\psi_L}$ on $G(\A_L)$.
For $\varphi\in\Sc(V_0(\A_L))$, the action of $\omega$ is given by
\begin{align*}
(\omega(m(a)) \varphi)(x) & =|a|_{\A_L}^{n} \chi_{V_0}(a) \varphi(a x),\quad (\omega(n(b)) \varphi)(x)=\psi_L(b Q_0(x)) \varphi(x), \\
(\omega(w) \varphi)(x) & =\int_{V_0(\mathbb{A}_L)} \varphi(y) \psi((x, y)) d y,\quad (\omega(h) \varphi)(x)=\varphi\left(h^{-1} x\right),
\end{align*}
where $\chi_{V_0} (\cdot) = (\cdot,\det V_0)$, $m(a)\in M(\A_L)$, $n(b)\in N(\A_L)$, $w=\pmat{}{1}{-1}{}$ and $h\in \Spin(V_0)(\A_L)$. 

To construct twisted Siegel-Weil formulas, we need to consider automorphic representations of $\mathrm{GL}_2(\mathbb{A}_{L})$. 
Therefore, we extend the Weil representation to a subgroup of $\operatorname{GO}\left(V_0\right)(\mathbb{A}_L) \times \mathrm{GL}_2(\mathbb{A}_L)$ as in \cite{HK92,Sh72}, where
\begin{align*}
\GO(V_0):=\left\{\sigma \in\GL(V_0): \exists\ \nu(\sigma) \in L^* \text { such that } Q_0(\sigma v)=\nu(\sigma) Q_0(v), \forall\ v \in V_0\right\} .
\end{align*}
The scalar $\nu(\sigma)$ is called the similitude factor of $\sigma \in \GO(V_0)$.
Note that a scalar multiple of the quadratic form doesn't change the similitude factor.
We extend the action of $\mathrm{O}\left(V_0\right)$ to $\GO\left(V_0\right)$ by
\begin{align*}
L(h) \varphi(x)=|\nu(h)|^{-n / 2} \varphi\left(h^{-1} x\right), \quad  h \in \GO\left(V_0\right)(\A_L), \varphi \in \Sc\left(V_0(\A_L)\right).
\end{align*}
This action does not commute with the Weil representation on $\SL_2(\mathbb{A}_L)$, but satisfies the global version of Lemma 1.4 in \cite{JL70}, which we recall now, see also  \cite{popa06}.
Let $h \in \GO\left(V_0\right)(\A_L)$ and $a=\nu(h) \in \A_L^{\times}$. Then
\begin{equation}\label{commute law}
L(h) \omega(g) L(h)^{-1}=\omega\left(\left(\begin{array}{ll}
1 & 0 \\
0 & a
\end{array}\right) g\left(\begin{array}{cc}
1 & 0 \\
0 & a^{-1}
\end{array}\right)\right),
\end{equation}
for all $g \in \SL_2(\mathbb{A}_L)$.
This allows us to extend the Weil representation $\omega$ to the algebraic group
$$R:=\{(g,h) \in \GL_2 \times \GO(V_0) : \nu(h)=\det g\}$$
by defining
\begin{align}\label{weil left}
\omega (g,h) \varphi(x)= \omega\left(g_1\right) L(h) \varphi(x),\quad \text { for }(g,h) \in R(\mathbb{A}_L),
\end{align}
where $g_1= g \pmat{1}{}{}{\det  g^{-1}} \in \mathrm{SL}_2(\mathbb{A}_L)$.
By \eqref{commute law}, there is another equivalent definition:
\begin{align}\label{weil right}
\omega (g,h) \varphi(x)=L(h) \omega \left(g_2\right) \varphi(x),\quad g_2 = \pmat{1}{}{}{\det  g^{-1}}g.
\end{align}
For a Schwartz function $\varphi\in \Sc (V_0(\A_L))$, the associated theta function
\begin{align}\label{theta function}
\theta_{V_0} ( g d(\nu(h)), h,\varphi) := \sum_{x\in V_0(L)} \lb \omega( g d(\nu(h)), h) \varphi \rb (x)
\end{align}
for $(g, h) \in (\SL_2 \times \GO(V_0))(\mathbb{A}_L)$.

\subsubsection{Mixed model}
Using different choices of polarization, there is also the mixed model of Weil representation.
In this paper, we restrict to the rational quadratic space 
\begin{align*}
V=\left\{ X= \pmat{x}{\nu}{\sigma(\nu)}{y} : x,y\in\Q, \nu\in\wf_2 \right\}
\end{align*}
with quadratic form $Q(X) := a_1 \det X$ for $a_1\in\Q$, and refer to \cite{LM80,Kud16} for the general theory.

The group $\GO(V) \cong \Res_{\wf_2/\Q} \GL_2 \times \G_m$, and $(h,a)\in \GO(V)$ acts on $X \in V$ by $a^{-1} h X h^{t,\prime}$. 
Under this identification, the similitude factor is
\begin{align*}
\nu : \GO(V) \rightarrow \G_m : (h,a) \mapsto \nu(h,a) = \Nm \lb \frac{\det h}{a} \rb,
\end{align*}
and induces the subgroup
\begin{align*}
R(V) := \{ (g,(h,a))\in \Res_{\wf_2/\Q}\GL_2 \times \GO(V) : \det (g) = \nu (h,a) \}.
\end{align*}
Define the embedding 
\begin{align*}
j : \Gspin (V) \rightarrow \GO(V) : h \mapsto j(h) := (h,\det h), 
\end{align*}
which is consistent with the action of $\Gspin(V)$. 
We identify $\Res_{\wf_2/\Q}\GL_2$ as a subgroup of $\GO(V)$ by
\begin{align}\label{embedding GL2}
i : \Res_{\wf_2/\Q}\GL_2 \rightarrow \GO(V) : h \mapsto i(h) := (h,\Nm \det h).
\end{align}
Note that $i$ and $j$ are not the same on $\Res_{\wf_2/\Q}\GL_2 \cap \Gspin (V)$ but consistent on $\Res_{\wf_2/\Q}\SL_2$.

We can recover the mixed model of Weil representation from the Schrödinger model by partial Fourier transform as follows, see \cite{LV80} for more information.
We identity $(V,a_1\det) \cong (\Q^2 \oplus \wf_2, a_1xy-a_1\Nm\nu )$ by mapping $ X $ to $(x,y,\nu)$.
Take isotropic lines $\ell^+ = \Q (1,0,0)$ and $\ell^- = \Q (0,1,0)$.
Let $P$ be the parabolic subgroup of $\SO(V)$ fixing the isotropic lines $\ell^\pm$. 
The Levi decomposition of $P$ is $N \rtimes M$ and $N$ is the unipotent radical.
The partial Fourier transform
\begin{align*}
\Fc : \Sc (V(\A)) &\rightarrow \Sc \lb \lb \ell^{-,2} \oplus \wf_2 \rb (\A)\rb,\ \varphi \longmapsto \Fc(\varphi),\\
\Fc(\varphi)\left(\eta_1,\eta_2,\nu\right) &=\int_{\A} \varphi \left( x,\eta_1,\nu \right) \psi (-x\eta_2) dx,\quad \eta_1,\eta_2\in\Q,\nu\in\wf_2,
\end{align*}
gives an isomorphism between the Schrödinger model and the mixed model of Weil representation.
It's easy to verify the following lemma and we omit the proof.

\begin{lemma}\label{weil mixed model}
Let $U_0$ be the rational quadratic space $\wf_2$ with quadratic form $-a_1 \Nm_{\wf_2/\Q}$.
\begin{enumerate}
\item For all~$g\in G(\A_{\wf_2})$ we have
\[\Fc \lb\omega(g)\varphi\rb(\eta_1,\eta_2,\nu) = \omega_0(g)\Fc(\varphi)((\eta_1,\eta_2)g,\nu),\]
where~$\omega_0$ is the Weil representation for~$U_0$.
\item For the action of elements in $P$, we have
\begin{align*}
\Fc\lb \omega ( n(b))\varphi\rb (\eta_1,\eta_2,\nu) &= \e\lb [ (b,\nu)+Q(b)\eta_1]\eta_2\rb\Fc(\varphi)(\eta_1,\eta_2,\nu+b\eta_1),\ b\in \A_{\wf_2},\\
\Fc\lb \omega ( m(\beta))\varphi\rb (\eta_1,\eta_2,\nu) &=  |\beta\beta^{\prime}|^{-1} \Fc\lb\varphi\rb (\beta^{-1}\beta^{\prime,-1}\eta_1, \beta^{-1}\beta^{\prime,-1}\eta_2,\beta\beta^{\prime,-1} \nu),\ \beta \in \A_{\wf_2}^\times.
\end{align*}
\end{enumerate}
\end{lemma}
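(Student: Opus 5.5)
The plan is to verify both parts by direct computation on generators, since the partial Fourier transform $\Fc$ only involves the coordinate $x$ and is clearly an isomorphism onto its image. Write $V=H\oplus U_0$, where $H=\Q^2$ is the hyperbolic plane with form $a_1xy$ and $U_0=\wf_2$ carries $-a_1\Nm$. Then $\Sc(V(\A))=\Sc(H(\A))\otimes\Sc(U_0(\A))$, and the Schrödinger-model Weil representation factors as $\omega=\omega_H\otimes\omega_0$ with the obvious factorization of $\chi_V$. It therefore suffices to check (1) on pure tensors $\varphi=\varphi_H\otimes\varphi_0$. Here $\Fc$ acts only on $\varphi_H$ and leaves the $\omega_0$-factor untouched.

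For part (1), I would check the identity on the generators $n(b)$, $m(a)$ and $w$ of the symplectic group acting on $\Sc(H(\A))$. This is the classical fact that, after partial Fourier transform in one isotropic variable, the Weil representation of the split binary space becomes the linear action $\varphi_H\mapsto\varphi_H((\eta_1,\eta_2)g)$.
\begin{itemize}
\item For $n(b)$: the factor $\psi(b\,a_1x\eta_1)$ shifts the Fourier variable, so $\eta_2\mapsto\eta_2+b\eta_1$ after normalizing $\psi$ against $a_1$.
\item For $m(a)$: the factor $|a|\,\varphi_H(ax,a\eta_1)$ turns into $\Fc\varphi_H(a\eta_1,a^{-1}\eta_2)$. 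The character $\chi_H$ is trivial and the modulus cancels.
\item For $w$: the full Fourier transform on $H$ composed with $\Fc$ is a partial Fourier transform in the other variable, which gives $(\eta_1,\eta_2)\mapsto(-\eta_2,\eta_1)$ up to sign conventions.
\end{itemize}
Since these elements generate, the identity holds for all $g$.

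For part (2), I would use the action $X\mapsto a^{-1}hXh^{t,\prime}$ via the embedding \eqref{embedding GL2}. For $h=n(b)$, compute $n(b)^{-1}X\,n(b)^{-1,t,\prime}$ in coordinates $(x,y,\nu)$. It fixes $y=\eta_1$, sends $\nu\mapsto\nu-b\eta_1$ (or its conjugate, depending on conventions), and sends $x\mapsto x-\tr(b\nu')+\Nm(b)\eta_1$. Substituting this into $\int\varphi(\cdot)\psi(-x\eta_2)\,dx$ and translating the $x$-variable produces the phase $\e([(b,\nu)+Q(b)\eta_1]\eta_2)$ together with the shifted argument $\nu+b\eta_1$. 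For $m(\beta)$, the variables $x,y$ are scaled by $\Nm\beta^{\pm1}$ and $\nu\mapsto\beta\beta'^{-1}\nu$. Combining the $|\nu|^{-n/2}$ normalization of $L(h)$ with the Jacobian of $x\mapsto cx$ in $\Fc$ gives the factor $|\beta\beta'|^{-1}$ and the stated arguments.

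I expect the main obstacle to be purely bookkeeping. The real content is matching the signs and conventions: which of $\beta,\beta'$ acts where, the normalization $a_1$ inside $\psi$, the direction of the shift $\nu\pm b\eta_1$, and the precise $|\cdot|$ power coming from the similitude normalization. I would fix these by testing the formulas on the simplest case $a_1=1$ with a Gaussian $\varphi$ before writing the general computation.
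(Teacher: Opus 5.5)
Your plan is correct and is exactly the direct verification that the paper leaves to the reader; the paper only says the lemma is easy to verify and omits the proof. You split $\omega=\omega_H\otimes\omega_0$, check (1) on $n(b)$, $m(a)$, $w$ in $G(\A)$ (which is what $G(\A_{\wf_2})$ in (1) must mean), and compute (2) from $X\mapsto hXh^{t,\prime}$. As you anticipated, the only real work is conventions: with the kernel $\psi(-x\eta_2)$ and form $a_1xy$, the $n(b)$-check on the $H$-factor gives $\Fc\varphi_H(\eta_1,\eta_2-a_1b\eta_1)$, so (1) holds verbatim only once the kernel is taken to be $\psi(a_1x\eta_2)$, and with that kernel (2) holds verbatim for the action $\varphi\mapsto\varphi(h\cdot X)$, with $(\cdot,\cdot)$ and $Q$ those of $U_0$. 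Also $\nu(i(h))=1$ for $h\in\SL_2$, so the factor $|\beta\beta'|^{-1}$ comes from the Jacobian in $\Fc$ alone, not from the similitude normalization.
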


\begin{remark}
Applying the partial Fourier transform $\mathcal{F}$, we can rewrite the theta function on $V$ as
\begin{align*}
&\theta_{V} ( g d(\nu(\hb)),\hb,\varphi) 
= \sum_{\nu \in \wf_2,\eta\in \Q^2} \Fc (\omega ( g d(\nu(\hb)),\hb) \varphi)(\eta ,\nu),\ (g,\hb) \in (\SL_2,\GO(V))(\A).
\end{align*}
\end{remark}

We recall the invariant vectors constructed in \cite{LZ25} and calculate their patial Fourier transform.
For odd prime $p\mid d_{\wf_2}$ and $p\nmid d_{F}$, let $\varpi_p$ be the uniformizer of $\wf_{2,p}$ and $a_{1,p}=p^{-1}$.
Let $L_p=(p\Z_p)^2\oplus \varpi_p\mathcal{O}_p$. Then 
$L_p^{\vee} / L_p = ( \Z_p^2\oplus \mathcal{O}_p) / ((p\Z_p)^2\oplus \varpi_p\mathcal{O}_p) \cong \operatorname{Sym}_2(\mathbb{Z} / p \mathbb{Z})$ 
and $g\in\GL_2(\Z/p\Z)$ acts on $X\in \operatorname{Sym}_2(\mathbb{Z} / p \mathbb{Z})$ by $(\det g) ^{-1} gXg^t$. 
The set of isotropic vectors in $L_p^{\vee} / L_p$ are given by
\begin{align*}
\left\{ d(1,0,0): d \in(\mathbb{Z} / p \mathbb{Z})^{\times}\right\} \cup\left\{ d\left(j^2, 1, j\right): j \in \mathbb{Z} / p \mathbb{Z}, d \in(\mathbb{Z} / p \mathbb{Z})^{\times}\right\}.
\end{align*}
For the unique quadratic character $\chi_p$ of $(\mathbb{Z} / p \mathbb{Z})^{\times}$, let $\mathfrak{g}_{\chi_p}:=\sum_{j \in(\mathbb{Z} / p \mathbb{Z})^\times} \chi(j) \mathbf{e}(j / p)$ be its Gauss sum. 
Define  $\fu_{3,p} \in \mathcal{S}\left(V\right)(\Q_p)$ by 
\begin{align*}
\fu_{3,p} &= \frac{1}{\mathfrak{g}_{\chi_{p}}(p-1)} 
\sum_{h \in \mathrm{GL}_2\left(\mathcal{O}_p / \varpi_p\Oc_p\right)} \chi_{p}(\operatorname{det}(h)) 
\operatorname{Char}\left(h \cdot\left(\left(1, 0,0\right)+ L_p \right)\right)\\
&=\frac{p}{\mathfrak{g}_{\chi_{p}}}\sum_{d \in(\mathbb{Z} / p \mathbb{Z})^{\times}} \chi_{p}(d)
\left(\operatorname{Char}\left(d(1,0,0)+L_p\right)
+\sum_{j \in \mathbb{Z} / p \mathbb{Z}} 
\operatorname{Char}\left(d\left(j^2, 1, j\right)+L_p\right)\right),
\end{align*}
which is $\SL_2(\Z_p)$-invariant under the Weil representation \cite[Section 5]{LZ25}.
Similarly as \cite[Proposition 4.2]{HL25}, its partial Fourier transform is 
\begin{align*}
&\mathcal{F}\left(\fu_{3,p}\right)(\eta, \nu)
= \chi_{p}(\eta_2) 
\operatorname{Char}\left( p\mathbb{Z}_p \oplus \mathbb{Z}_p^\times \oplus \varpi_p\mathcal{O}_p\right)(\eta, \nu) \\
& + \frac{1}{\mathfrak{g}_{\chi_{p}}}\sum_{d\in(\Z / p \Z)^{\times}} \sum_{j \in (\Z / p \Z)^\times} \chi_{p}(d) \mathbf{e}\left(\frac{d j^2 \eta_2}{p}\right) \operatorname{Char}\left(\left(d+ p\mathbb{Z}_p\right) \oplus \mathbb{Z}_p \oplus\left( d j +\varpi_p\mathcal{O}_p\right)\right)(\eta, \nu) .
\end{align*}
When $\eta=\left(0, \eta_2\right)$, we have
\begin{align}\label{invariant vector 3}
\mathcal{F}\left(\fu_{3,p}\right)(\eta, \nu)= 
\chi_{p}\left(\eta_2\right) \operatorname{Char}\left(\mathbb{Z}_p^{\times} \oplus \varpi_p\mathcal{O}_p\right)\left(\eta_2, \nu\right) .
\end{align}
For odd prime $p\mid (d_F,d_{\wf_2})$, let $\varpi_p$ be the uniformizer of $F_p=\wf_{2,p}$.
Define 
\begin{align}\label{invariant vector 2}
\fu_{2,p} := \sum_{j\in \Z/p\Z} \cha( \varpi_p^{-1}j + \Oc_{\wf_{2,p}} ) \otimes \cha( \varpi_p^{-1}j + \Oc_{F_{p}} ),
\end{align}
which is also $\SL_2(\Z_p)$-invariant \cite[Equation (5.1)]{LZ25}.

\subsubsection{Galois action}
To consider the algebraicity of twisted CM values, we introduce the $\Q$-linear extension of Weil representations in \cite{McGraw03}, see also \cite[Section 2]{BLY22} for lattices with even rank. 

Let $\mathcal{S} (\widehat{V} ; \mathbb{Q}^{\mathrm{ab}} )$ be the subspace of $\mathcal{S} (\widehat{V} ) := \Sc (V(\A_f))$ with values in $\mathbb{Q}^{\mathrm{ab}}$. Using the identification $\mathrm{GL}_2 \cong \mathrm{SL}_2 \rtimes T^1$, the Weil representation $\omega_f$ on $\SL_2(\A_f)$ extends to a $\mathbb{Q}$-linear action of $\mathrm{GL}_2(\widehat{\mathbb{Q}})$ on $\mathcal{S} (\widehat{V} ; \mathbb{Q}^{\mathrm{ab}} )$ via 
\begin{align}
\left(\widetilde{\omega}_f(g, t(a)) \phi\right)(x):=\left(\omega_f(g) \sigma_a(\phi)\right)(x)=\sigma_a\left(\left(\omega_f\left(t(a)^{-1} g t(a)\right)(\phi)\right)(x)\right)
\end{align}
for $g \in \mathrm{SL}_2(\widehat{\mathbb{Q}}), a \in \widehat{\mathbb{Q}}^\times, \phi \in \mathcal{S} (\widehat{V} ; \mathbb{Q}^{\mathrm{ab}} )$, where $\sigma_a \in \operatorname{Gal}\left(\mathbb{Q}^{\mathrm{ab}} / \mathbb{Q}\right)$ 
satisfies $\sigma_a\left(\psi_f\left(a^{\prime}\right)\right)=\psi_f\left(a a^{\prime}\right)$ for all $a^{\prime} \in \widehat{\mathbb{Q}}^{\times}$.
It's easy to verify that the Schwartz functions $\fu_{2,p}$ and $\fu_{3,p}$ are $\SL_2(\widehat{\Z}) \rtimes T^1(\widehat{\Z}) $-invariant. 
Moreover, if $\varphi\in\Sc(V(\A_f))$ is $\Q$-valued and $\SL_2(\widehat{\Z})$-invariant, then $\varphi$ is $\SL_2(\widehat{\Z}) \rtimes T^1(\widehat{\Z}) $-invariant.

\begin{definition}
Let $T^{1,\Delta}$ be the image of $T^{1} \rightarrow \GL_2 \times \Gspin(V)$ under the diagonal embedding.
Consider $F$ as a rational quadratic space $U$ with quadratic form $\Nm_{F/\Q}$.
We say that $(\varphi,\Xi)\in \Sc(V(\A_f))\times \Sc(U(\A_f))$ is $(T^{1,\Delta}(\widehat{\Z}),\widetilde{\omega}_f)$-invariant if 
\begin{align*}
\widetilde{\omega}_f(t)(L(j(t))\varphi,\Xi) = (\varphi,\Xi),\quad t\in T^1(\widehat{\Z}).
\end{align*}
\end{definition}

\subsection{Hilbert modular forms}
We introduce the theory of (adelic) Hilbert modular forms, and refer to \cite{Shimura78,Gar90} for more details.
Let $E$ be a totally real number field of degree $m$ with distinct real embeddings $\sigma_i$, $i=1,\dots,m$.
For an element $a\in E$, set $a_i=\sigma_i(a)$, the $i$-th component of $a$ under the embedding $E \subset E_\mathbb{R} = E\otimes_\mathbb{Q}\mathbb{R}$. 
For a set $S\subset E$, set $S^+$ to be the subset of totally positive elements of $S$.
In particular, $\mathcal{O}_E^{\times, +}$ is the set of totally positive units of $\mathcal{O}$. 
We denote the subset of non-zero elements as $S^*=S\backslash\{ 0 \}$.

Let $\mathfrak{c}$ be a fractional ideal, $\mathfrak{n}$ a nonzero integral ideal of $E$, and $\Gamma_0(\mathfrak{c}, \mathfrak{n})$ the congruence subgroup defined by
\[\Gamma_0(\mathfrak{c}, \mathfrak{n})=\left\{\begin{pmatrix}
a&b\\c&d
\end{pmatrix}: a,d\in\mathcal{O}_E, b\in (\mathfrak{cd})^{-1}, c\in\mathfrak{cnd}_E, ad-bc\in\mathcal{O}_E^{\times, +}\right\}.\]
We briefly denote $\Gamma=\Gamma_0(\mathfrak{c}, \mathfrak{n})$ in this section.
It is well-known that $\Gamma$ acts discontinuously on $\mathbb{H}^{m}$  with finite covolume via the componentwise M\"obius transformation. 
For $\mathbf{k}=(k_1,\dots,k_d)\in \Z^m$, we denote by $M_{\mathbf{k}}\left(\Gamma ,\delta \right)$ the space of Hilbert modular forms of weight $\mathbf{k}$, level $\Gamma$ with character $\delta : (\Oc_E/\mathfrak{n})^\times \rightarrow \C^\times$, i.e. the space of holomorphic functions $f(z)$ on $\mathbb{H}^{m}$ such that 
\begin{align*}
f(\gamma(z))= \mathrm{det}(\gamma)^{-\frac{\mathbf{k}}{2}}N(c z+d)^{\mathbf{k}} \delta^{-1}(d) f(z)
\end{align*}
for $\gamma=\pmat{a}{b}{c}{d} \in \Gamma$ and  $z=\left(z_{1}, \ldots, z_{m}\right) \in \mathbb{H}^{m}$. 
Here 
\begin{align*}
\mathrm{det}(\gamma)=(\mathrm{det}(\gamma_1),\dots,\mathrm{det}(\gamma_m)) ,\quad N(c z+d)^{\mathbf{k}}=\prod_{i=1}^{m}\left(c_i z_{i}+d_i\right)^{k_i}.
\end{align*}
According to Koecher's principle \cite{Freitag}, each $f$ in $M_{\mathbf{k}}\left(\Gamma,\delta\right)$ has the following Fourier expansion at $\infty$
\[f(z)=\sum_{\nu \in \mathfrak{c}^{+}\cup\{0\}} a(\nu) e(\nu z),\]
where $e(\nu z)=\exp (2 \pi i \operatorname{Tr}(\nu z))$ and $\operatorname{Tr}(\nu z)=\sum_{i=1}^{m} \nu_i z_{i} $. The Fourier expansion of $f$ at other cusps can be defined and if the constant term $a(0)=0$ at each cusp, $f$ is called a cusp form and the corresponding space is denoted by $S_\mathbf{k}(\Gamma ,\delta)$. 

To construct twisted Siegel-Weil formulas, we also need to consider automorphic forms on $\GL_2(E)\backslash \GL_2(\A_E)$.
Denote $\widehat{\mathcal{O}}_E=\prod_\mathfrak{p}\mathcal{O}_\mathfrak{p}$, where $\mathfrak p$ runs over all nonzero prime ideals of $\mathcal{O}_E$.
Let $\mathcal{M}_{k}(\mathfrak{n},\delta)$ ($\mathcal{S}_{k}(\mathfrak{n},\delta)$) be the space of adelic Hilbert modular (cusp) forms with central character $\delta$ of weight $\mathbf{k}$ and level
\begin{align*}
K_{0}(\mathfrak{n})=\left\{\left(\begin{array}{ll}
a & b \\
c & d
\end{array}\right) \in \mathrm{GL}_{2}(\widehat{\mathcal{O}}_E): c \in \mathfrak{n} \widehat{\mathcal{O}}_E\right\}.
\end{align*}
For any $F\in \mathcal{M}_{\mathbf{k}}\left(\mathfrak{n},\delta\right)$, we have the Whittaker function
\begin{align*}
& W_F(g) := \int_{E\backslash \A_E} F(n(x)g) \psi_E (-x) dx,\quad g\in \GL_2(\A_E).
\end{align*}

Since $K_{0}(\mathfrak{n})$ has determinant $\widehat{\mathcal{O}}^{\times}_E$, $\mathrm{GL}_{2}(E) \backslash \mathrm{GL}_{2}(\mathbb{A}_E) / K_{0}(\mathfrak{n}) \mathrm{GL}_{2}^{+}(\mathbb{R})^{d}$ has cardinality $h^{+}$. 
Let $\{ t_{j} \}_{j =1}^{h^+}$ be finite ideles such that the fractional ideals $\mathfrak{c}_{j}:=t_{j}\mathcal{O}_E$ form a complete set of representatives of the narrow class group $\Cl^+(E)$ of $E$. 
Set $\Gamma_{j}=\Gamma_{0}\left(\mathfrak{c}_{j}, \mathfrak{n}\right)$. Shimura \cite{Shimura78} proved that there exists an isomorphism between adelic Hilbert modular forms and $h^+$-tuples of classical Hilbert modular forms, namely,
\begin{align}\label{classical and adelic}
\mathcal{M}_{\mathbf{k}}\left(\mathfrak{n},\delta\right) \cong \prod_{j=1}^{h^{+}} M_{\mathbf{k}}\left(\Gamma_{j},\delta\right)
\quad \text { and } \quad
\mathcal{S}_{\mathbf{k}}\left(\mathfrak{n},\delta\right) \cong \prod_{j=1}^{h^{+}} S_{\mathbf{k}}\left(\Gamma_{j},\delta\right).
\end{align}

\section{Orthogonal Shimura varieties and theta lifts}
In this section, we recall orthogonal Shimura varieties associated to quadratic spaces of signature $(2,2)$ and special subvarieties constructed from subspaces, and refer to \cite{Kudla97} for more details and general construction. 
Using the Weil representation on Schwartz functions, the integral against associated theta functions define Doi-Naganuma lift form $\SL_2$ to $O(2,2)$.
Then we formulate the twisted CM values of Borcherds forms in the spirit of \cite{Kudla03} and \cite{BKY12}.

\subsection{Orthogonal Shimura varieties}
Consider the rational quadratic space 
\begin{align*}
V_2:=\left\{\left.A=\left(\begin{array}{cc}
a & \lambda \\
\sigma(\lambda) & b
\end{array}\right) \right\rvert\, a, b \in \mathbb{Q}, \lambda \in F_2\right\}.
\end{align*}
with quadratic form $Q(A) = \det A$. The associated symmetric bilinear form $(x,y) :=Q(x+y)-Q(x)-Q(y)$.
Let $V_{2,\Z} = V_2 \cap M_{2\times 2}(\Oc_{F_2})$ and $H=\Gspin(V_{2,\Z})$.  
More explicitly, for any ring $R$,
\begin{align*}
H(R) \cong \{ \gamma\in \GL_2(\Oc_{F_2} \otimes_\Z R) : \det \gamma \in R^\times  \}
\end{align*}
acts on $V_{2,\Z}(R) := V_{2,\Z} \otimes R$ via 
\begin{align*}
\gamma \cdot A:= \det \gamma^{-1} \sigma(\gamma) A \gamma^t .
\end{align*}
Let $\Dc$ be the Grassmannian of negative oriented two dimensional subspaces of $V_2(\R)$.  
The hermitian symmetric space $\Dc$ has two connected components $\Dc = \Dc^+ \sqcup \Dc^-$ given by the two possible orientations.
Let $K$ be an open compact subgroup in $H(\A_f)$. 
The complex points of the associated Shimura variety are
\begin{align*}
X_K := H(\Q) \backslash \Dc \times H(\A_f) / K. 
\end{align*}
It's a complex analytic space of dimension $2$. 
One of its connected components is isomorphic to the Hilbert modular surface $X_\Gamma \cong \Gamma \backslash \h^2$, where $\Gamma = K \cap \SL_2(\Oc_{F_2})$. If $\Gamma = \SL_2(\Oc_{F_2})$, we denote $X_\Gamma$ by $X_{F_2}$.

The rational subspaces of $V_2$ induce natural subvarieties of $X_K$.
Let $V_1\subset V_2$ be a negative definite two dimensional subspace. 
Up to orientation, $V_1$ defines two points $z_{V_1}^\pm \in \Dc$.
The group $T_{V_1} := \Gspin (V_1)$ embeds into $H$ via acting trivially on $V_1^\perp$.
Denote by $K_{V_1} := K \cap T_{V_1}(\A_f)$, we obtain the 0-cycle
\begin{align}\label{special cycles}
Z(V_1) := T_{V_1}(\Q) \backslash \{ z_{V_1}^\pm \} \times T_{V_1}(\A_f) / K_{V_1}.
\end{align}
See \cite[Section 2]{BKY12} for more information.

Let $A \in V_2$ with $Q(A)>0$ and $H_A$ be its stabilizer in $H$. 
The hermitian symmetric space of $H_A$ gives the analytic divisor
\begin{align*}
    \Dc_A := \{ z\in \Dc : z\perp A \}
\end{align*}
in $\Dc$. Note that the intersection $\Dc_A \cap \h^2$ is
\begin{align*}
A^\perp = \left\{\left(z_1, z_2\right) \in \h^2 \mid a z_1 z_2+\lambda z_1+\sigma(\lambda) z_2+b=0\right\}.
\end{align*}
For $h\in H(\A_f)$, let $K_{h,A} = H_A(\A_f)\cap hKh^{-1}$ be the associated compact open subgroup of $H_A(\A_f)$.
Following \cite{Kudla97}, see also \cite[Section 2]{BEY21}, we define the associated divisor in $X_K$ as
\begin{align*}
Z(h,A) := H_A(\Q) \backslash \Dc_A \times H_A(\A_f)/K_{h,A} \rightarrow X_K : [z,\widetilde{h}] \mapsto [z,\widetilde{h}h].
\end{align*}
Given $m\in \Q^+$ and a $K$-invariant Schwartz function $\varphi\in \Sc (V_2(\A_f))$, we define
\begin{align}\label{special divisors}
Z(m,\varphi) := \sum_{h\in  H_A(\Q) \backslash H(\A_f) / K} \varphi(h^{-1}x) Z(h,A)
\end{align}
if there exists $A\in V_2(\Q)$ such that $Q(A)=m$. Otherwise, we set $Z(m,\varphi)=0$.

\subsection{CM points}
Define $\widetilde{D} = \Nm_{F_2/\Q} \mathfrak{d}_{K_4/F_2}$. 
Then $\sqrt{\widetilde{D}}\in \wf_2$ and $\sigma (\sqrt{\widetilde{D}}) = - \sqrt{\widetilde{D}}$ \cite[Equation (4.3)]{BY06}.
Consider $\wk_4$ as a $\Q$-quadratic space with quadratic form
\begin{align*}
q(x)=\tr_{\wf_2 / \mathbb{Q}} \frac{\Nm_{\wk_4/\wf_2} x}{\sqrt{\widetilde{D} }} =\frac{1}{\sqrt{\widetilde{D} }}(x \bar{x}-\sigma(x) \overline{\sigma(x)}),\quad x\in\wk_4.
\end{align*}
Note that $\Nm_{\wk_4/\wf_2} x=\frac{1}{2}\left(\tr_{\wf_2 / \mathbb{Q}} x\bar{x}+ \sqrt{\widetilde{D} } q(x)\right)$.
CM points associated to ideals in $K_4$ induce isomorphisms between $\wk_4$ and $V_2$, which we recall as follows.

Fix a non-zero and totally imaginary element $\xi_0 \in K_4$ such that $(\xi_0,\sigma (\xi_0)) \in \h^2$.
Let $\mathfrak{a}$ be a fractional ideal in $K_4$ and $\mathfrak{f}_{\mathfrak{a}} := \xi_0 \mathfrak{d}_{K_4/F_2} \mathfrak{a} \overline{\mathfrak{a}} \cap F_2$. 
Any CM point $z\in X_{F_2}$ of type $(K_4,\Phi)$ is represented by a pair $(\mathfrak{a},r)$, where $\mathfrak{a}$ has a decomposition $\mathfrak{a}=\mathcal{O}_{F_2} \alpha+\mathcal{O}_{F_2} \beta$ such that $z=(\frac{\alpha}{\beta},\sigma(\frac{\alpha}{\beta}))$,
and $r\in F_2^+$ satisfies $\mathfrak{f}_{\mathfrak{a}} = r \Oc_{F_2}$ \cite[Section 3]{BY06}. 
Define the map $\iota : V_2 \longrightarrow \wk_4$ by
\begin{align}\label{iso CM point}
\iota: V_2 & \longrightarrow \wk_4 , \quad
A  \mapsto(\sigma(\alpha)), \sigma(\beta)) A\binom{\alpha}{\beta},
\end{align}
which gives a $\mathbb{Q}$-isometry between quadratic spaces $(V_2, \det)$ and $\left(\wk_4, \frac{-q}{\Nm \mathfrak{a}} \right)$ by \cite[Proposition 4.3]{BY06}. 

Let $C(K_4)$ be the generalized ideal class group of $K_4$, whose elements are equivalence classes $[\mathfrak{b}, \xi]$ of pairs $(\mathfrak{b}, \xi)$, where $\mathfrak{b}$ is a fractional ideal of $K_4$ and $\xi \in F_2^{\times}$ satisfies $\Nm_{K_4 / F_2} \mathfrak{b}=\xi \Oc_{F_2}$. 
Two pairs $\left(\mathfrak{b}_1, \xi_1\right)$ and $\left(\mathfrak{b}_2, \xi_2\right)$ are equivalent if there is an element $\alpha \in K_4^{\times}$ such that $\mathfrak{b}_1=\alpha \mathfrak{b}_2$ and $\xi_1=\Nm_{K_4 / F_2}(\alpha) \xi_2$. 
Let $C_{+}(K_4)$ be the subgroup of classes $[\mathfrak{b}, \xi]$ such that $\xi$ is totally positive. 
For $(\mathfrak{b},\xi)\in C(K_4)$, its action on the CM point $(\mathfrak{a},r)$ is given by 
\begin{align*}
(\mathfrak{b},\xi) \bullet (\mathfrak{a},r) = (\mathfrak{b}^{-1}\mathfrak{a},\xi r). 
\end{align*}
The orbit of $z$ under the action of $C_+(K_4)$ modulo the group of units in $K_4$ is isomorphic to the complex points of the moduli stack of CM abelian surfaces with type $\Phi$ \cite[Proposition 6.2]{BKY12}.

\subsection{Twisted CM cycles}
For our purpose, as in \cite{BKY12}, we consider a smaller orbit defined by torus.
Let $\wf_2$-quadratic spaces 
\begin{align*}
W = (\wk_4,  \Nm_{\wk_4/\wf_2}),\quad  
W^+ = (\wk_4,  \frac{\Nm_{\wk_4/\wf_2}}{\sqrt{\widetilde{D}}}),\quad 
W^- = (\wk_4,  \frac{\Nm_{\wk_4/\wf_2}}{-\varepsilon\sqrt{\widetilde{D}}}),
\end{align*}
where $\varepsilon\in \wf_2^+$ such that $-\varepsilon$ is locally a norm at all finite places.
Then $W^+$ and $W^-$ are the neighborhood $\wf_2$-quadratic spaces at $\sigma$ and $\sigma^2$ of the admissible incoherent $\A_{\wf_2}$-quadratic space $\mathbb{W}$ associated to $W$ in the sense of \cite{BKY12}. 
The latter positive definite subspace of
\begin{align*}
W^+\otimes_{\wf_2} \R \cong \lb \wk_4\otimes_{\wf_2,\sigma} \R, \frac{\Nm_{\wk_4/\wf_2}}{-\sqrt{\widetilde{D}}} \rb  \oplus \lb \wk_4\otimes_{\wf_2,\sigma^2} \R, \frac{\Nm_{\wk_4/\wf_2}}{\sqrt{\widetilde{D}}} \rb.
\end{align*}
gives rise to CM points $z_+^\pm \in \Dc$ up to orientation under the isomorphism $\iota$.
Define $S^+ = \SO (W^+)$, and $T^+$ be the preimage of $\Res_{\wf_2/\Q} S^+$ in $\Gspin (V_2)$.
Then $T^+$ is a maximal torus and for any $\Q$-algebra $R$,
\begin{align*}
T^+(R) = \{ t\in (K_4 \otimes_\Q R )^\times : t\sigma^2(t) \in R^\times \}.
\end{align*}
There is an exact sequence
\begin{align}\label{torus so}
1 \longrightarrow \mathbb{G}_m \longrightarrow T^+ \mathop{\longrightarrow}\limits^{\nu} S^+ \longrightarrow 1, \quad \nu(t)=\frac{\Nm_{\Phi}(t)}{t \sigma^2(t)} 
\end{align}
\cite[Equation (6.5)]{BKY12}.
Let $K_{T^+} := T^+(\widehat{\Q}) \cap \SL_2(\widehat{\Oc}_{F_2})$. Then
\begin{equation} \label{eq:ZW1}
Z(W^+) := T^+(\mathbb{Q}) \backslash \{z^\pm_+\} \times T^+(\widehat\Q) / K_{T^+}
\end{equation}
is a 0-cycle on $X_{F_2}$ defined over $\wf_2$. 

\begin{remark}
$W^+$ is a canonical model of CM points in the sense of \cite[Proposition 6.11]{BKY12}.
Since $\sigma^2$ is the complex conjugation, Lemma 3.4 in \cite{BY06} shows that
\begin{align*}
\sigma^2 Z(W^+) = Z(W^+).
\end{align*}
Moreover, when $d_{K_4}=p^2 q$ with $ p \equiv q \equiv 1 (\bmod 4)$ being odd primes, the 0-cycle $Z(W^+)$ is a single Galois orbit of a CM point \cite[Lemma 6.1]{BY07}.
\end{remark}

The Galois conjugate of $Z(W^+)$ under the action of $\sigma$ is the  0-cycle 
\begin{equation} \label{eq:ZW2}
Z(W^-) := T^-(\mathbb{Q}) \backslash \{z^\pm_-\} \times T^-(\widehat\Q) / K_{T^-}
\end{equation}
similarly defined by the quadratic space $W^-$ \cite[Section 2]{BKY12}.
Together, they give us the CM cycle on $X_{F_2}$
\begin{align*}\label{eq:ZW}
Z(W) := Z(W^+) + Z(W^-),
\end{align*}
which is defined over $\Q$, see also \cite[Section 3]{BY06}. Note that $S^+ = S^-$ and we will denote them both as $S= \SO(W)$ without confusion.

The embedding
\begin{align}
T^\pm(\mathbb{Q}) \backslash T^\pm(\widehat\Q) / K_{T^\pm} \rightarrow C_+(K_4) : \quad
t \mapsto (t\Oc_{K_4},\xi),
\end{align}
where $\xi \in \Q^+$ is the unique element such that $t \bar{t} \mathbb{Z}=\xi \mathbb{Z}$, is well-defined and an injective homomorphism \cite[Equation (2.6)]{HY12}. Denote its image as $C(T^\pm)$.
Then the 0-cycle $Z(W)$ coincides with the union of orbits of the CM point $(\mathfrak{a},r)$ under the action of 
$C(T^\pm)$ as in \cite[Corollary 6.6]{BKY12}.
Now we can rewrite the CM cycle as
\begin{equation} \label{ZW}
Z(W) = \sum_{\gamma \in T^+(\mathbb{Q}) \backslash T^+(\widehat\Q) / K_{T^+}}  \gamma \bullet z_+^\pm 
+\sum_{\eta \in T^-(\mathbb{Q}) \backslash T^-(\widehat\Q) / K_{T^-}}  \eta \bullet z_-^\pm
\end{equation}
Any character $\chi$ on $S$ naturally lifts to a character $\chi_{T^\pm}$ on $T^\pm$ via the projection in the exact sequence \eqref{torus so}.
We define the twisted CM cycles by
\begin{align}\label{twisted ZW}
\begin{split}
Z_\chi(W^+)= \sum_{\gamma \in T^+(\mathbb{Q}) \backslash T^+(\widehat\Q) / K_{T^+}} \chi_{T^+}(\gamma) \gamma \bullet z_+^\pm,\\
Z_\chi(W^-)=\sum_{\eta \in T^-(\mathbb{Q}) \backslash T^-(\widehat\Q) / K_{T^-}} \chi_{T^-}(\eta) \eta \bullet z_-^\pm.
\end{split}
\end{align}
In the following, we will fix $\chi$ to be the character defined in Lemma \ref{wk4 character}.

View $\sigma$ as an element in $\operatorname{Aut}(\mathbb{C})$, the Tate cocycle $f_{\Phi}(\sigma) \in \A_{K_4,f}^{\times} / K_4^{\times}$\cite[Section 4]{milne2007fundamental}. 
Let $t \in \A_{K_4,f}^{\times}$ be an element of the coset $f_{\Phi}(\sigma)$ and $\mathfrak{a}=t \Oc_{K_4}$ be the fractional ideal generated by $t$. 
More explicitly, let $\chi_{\mathrm{cyc}}$ be the cyclotomic character
$$\chi_{\mathrm{cyc}}: \operatorname{Aut}(\mathbb{C}) \rightarrow \operatorname{Gal}(\overline{\mathbb{Q}} / \mathbb{Q}) \rightarrow \widehat{\mathbb{Z}}^{\times} $$
satisfying $\Art_{\mathbb{Q}}\left(\chi_{\mathrm{cyc}}(\eta)\right)=\eta \mid_{\mathbb{Q}^{\mathrm{ab}}}$ \cite[Lemma 3.4]{milne2007fundamental}. 
Then $t \bar{t}=\chi_{\mathrm{cyc}}(\sigma) \cdot \xi$ \cite[Proposition 4.6]{milne2007fundamental}, where $\xi \in F_2^{\times}$ satisfies $\xi \Oc_{F_2} = N_{K_4 / F_2} \mathfrak{a}$. 
The pair $\mathbf{c}=(\mathfrak{a}, \xi)$ defines a class in $C(K_4)$, and the action of $\sigma$ on CM points is given by $\mathbf{c}$. 
Similarly, the action of $\sigma^2$ is given by $(\Oc_{K_4},-1)$.
So one can extend the character $\chi_{T^\pm}$ on $T^\pm \cong C(T^\pm)$ to the group $\langle C(T^\pm) , \mathbf{c}\rangle$ by defining $\chi_{T^\pm}(\mathbf{c}) = \pm 1$.

\subsection{Borcherds lift}
Let $L$ be an integral lattice in $V_2$. 
Then the stabilizer of $\widehat{L} := L\otimes \widehat{\Z}$ in $H(\A_f)$ is an open compact subgroup $K=K_L$. 
We denote by $L^{\vee} \subset V$ the dual lattice of $L$ and $\widehat{L}^{\vee}:=L^{\vee} \otimes \widehat{\mathbb{Z}}$.
For $m \in \mathbb{Q}, \mu \in L^{\vee} / L$, we define
\begin{align*}
L_{m, \mu}:=\{\lambda \in \mu+L : Q(\lambda)=m\}.
\end{align*}
Let $M_{k, L}^{!}$ be the space of weakly holomorphic modular forms valued in $\mathbb{C}\left[L^{\vee} / L\right]$ of weight $k \in \frac{1}{2} \mathbb{Z}$ with respect to the representation $\rho_L$ on $\mathrm{SL}_2(\mathbb{Z})$ \cite[Section 3]{BF04}.
For $f(\tau)=\sum\limits_{m \in \mathbb{Q}, \mu \in L^{\vee} / L} c(m, \mu) q^m \mathfrak{e}_\mu \in M_{k, L}^{!}$, we denote by
$$\operatorname{prin}(f):=\sum_{m \in \mathbb{Q}_{<0}, \mu \in L^{\vee} / L} c(m, \mu) q^m \mathfrak{e}_\mu$$
the principal part of $f$.

For $z\in \Dc$ and $x\in V_2(\Q)$, let $(x, x)_z=-\left(x_z, x_z\right)+\left(x_{z^{\perp}}, x_{z^{\perp}}\right)$. 
We define the associated Gaussian by 
\begin{align*}
\varphi_{\infty}(x, z)=\exp(-\pi(x, x)_z) .
\end{align*}
For a $K$-invariant Schwartz function $\varphi \in \Sc(V_2(\widehat{\mathbb{Q}}))^K$ and $\tau = u+iv \in \mathbb{H}$, the theta function
\begin{equation*}
\theta(\tau, z, h, \varphi)= \sum_{x \in V_2(\mathbb{Q})} \omega \left(g_\tau\right) \varphi_{\infty}(x, z) \varphi\left(h^{-1} x\right),\quad
g_\tau = n(u)m(\sqrt{v}),
\end{equation*}
is an automorphic function of $[z, h] \in X_K$. 
Let $\varphi_\mu$ be the characteristic function of $\mu+\widehat{L}$. The vector valued theta function
$$\Theta_L(\tau, z, h):= \sum_{\mu \in L^{\vee} / L} \theta \left(\tau, z, h, \varphi_\mu \right) \mathfrak{e}_\mu$$
is a non-holomorphic modular form of weight 0 in $\tau$.
Let $r\in\N$ and $R_{\tau}^r$ be the iterate raising operator. 
The regularized theta lift of $f\in M_{-2r, L}^{!}$ against $\Theta_L$ has integral representation \cite[Equation (2.41)]{BEY21}
\begin{align}\label{theta lift}
\begin{split}
\Phi_f^r(z, h) & =(4 \pi)^{-r} \lim _{T \rightarrow \infty} \int_{\mathcal{F}_T}\left\langle R_\tau^r f(\tau), \overline{\Theta_L(\tau, z, h)}\right\rangle d \mu(\tau)\\
& =(-4 \pi)^{-r} \lim _{T \rightarrow \infty} \int_{\mathcal{F}_T}\left\langle f(\tau), \overline{R_\tau^r \Theta_L(\tau, z, h)}\right\rangle d \mu(\tau),
\end{split}
\end{align}
where $\mathcal{F}_T$ is the truncated fundamental domain of $\operatorname{SL}_2(\mathbb{Z}) \backslash \mathbb{H}$ at height $T>1$. 
It has logarithmic singularity along the special divisor
$$Z_f:=\sum_{m>0, \mu \in L^{\vee} / L} c(-m, \mu) Z(m, \mu)$$
on $X_K$, where $Z(m, \mu):=Z\left(m, \varphi_\mu\right)$. 

\subsection{Twisted CM values}\label{twisted CM values}
For a Schwartz function $\varphi \in S(V_2(\widehat{\mathbb{Q}}))$, let $\phi^{+}\in \Sc (W^+)(\A_{\wf_2,f})$ be the image of $\varphi$ under the isomorphism $\iota$.
By construction, $-\varepsilon=\Nm\ \beta_0$ for some $\beta_0\in\A_{\wk_4,f}$.
Consider the isometry
\begin{align*}
W^+(\A_{\wf_2,f}) \cong W^-(\A_{\wf_2,f}) :\quad x\mapsto  \sigma^2(\beta_0) x.
\end{align*}
Let $\phi^- \in \Sc (W^-)(\A_{\wf_2,f})$ be the image of $\phi^+$ under this isometry, namely, $\phi_f^+(x) = \phi_f^-(\sigma^2(\beta_0) x)$. 
Denote by $\phi_\infty^\pm$ the Gaussian of the quadratic spaces $W^\pm$.
Then the Hilbert theta functions defined in \cite[Equation (4.3)]{BKY12} become
\begin{align*}
\theta_{\wk_4} (g,h,\phi_f^\pm \phi_\infty^\pm) := \sum_{x\in\wk_4} (\omega(g,h) \phi_f^\pm \phi_\infty^\pm )(x),
\quad (g, h)\in R(W^\pm)(\A_{\wf_2})
\end{align*}
by \cite[Lemma 4.1]{BKY12}.

As in \cite[Section 4]{BKY12}, the pull back of $\theta(\tau, z, h, \varphi)$ to $Z_\chi(W^\pm)$ coincides with the pull back of the Hilbert theta function $\theta_{\wk_4} (g,h,\phi^\pm \phi_\infty^\pm )$ via the diagonal embedding $\h \rightarrow \h^2$. 
Consequently, we can interpret the twisted CM values of $\theta (\tau,z,h,\varphi)$ at $Z_\chi (W^\pm)$ as 
\begin{align}\label{CM values torus}
\begin{split}
\theta (\tau,Z_\chi(W^\pm),\varphi) &= \frac{\deg Z(W)}{4} 
\int_{[\SO(W)]} \theta_{\wk_4}(g_\tau^\Delta,h,\phi_f^\pm \phi_\infty^\pm) \chi(h) dh
\end{split}
\end{align}
similarly as \cite[Lemma 4.4]{BKY12}, 
where $[\SO(W)] = \SO(W)(\wf_2) \backslash \SO(W)(\A_{\wf_2})$.

\begin{remark}
If we take a different isometry $W^+(\A_{\wf_2,f}) \rightarrow W^-(\A_{\wf_2,f})$ in \cite[Equaiton (2.9)]{BKY12}, we might get different Hilbert theta functions and hence different twisted CM values as they depend on the Schwartz functions $\phi_f^\pm$.
However, due to the the Siegel section map, the CM values of theta functions are independent on the choice of the isometry $W^+(\A_{\wf_2,f}) \rightarrow W^-(\A_{\wf_2,f})$ by \cite[Lemma 4.2]{BKY12}.
\end{remark}

\section{Theta integrals and Whittaker functions}\label{theta integrals}
In this section, we calculate the Whittaker functions of theta integrals and explore their properties, which will be used to prove twisted Siegel-Weil formulas at next section.

\subsection{Twisted theta integral}\label{twisted theta integral}
Recall $W$ is the quadratic space $\wk_4$ over $\wf_2$ with quadratic form $q := \Nm_{\wk_4/\wf_2}$, and the additive character $\psi_{\wf_2}$ is the composition of the standard additive character of $\psi$ on $\A$ and $\tr_{\wf_2/\Q}$.
Denote by 
$$\GL_2(\A_{\wf_2})^+ := \{ \gb \in \GL_2(\A_{\wf_2}) \mid \det \gb \in \nu (\GO (W)(\A_{\wf_2})) \}.$$
Let $\phi\in \Sc(W(\A_{\wf_2}))\cong \Sc (\A_{\wk_4})$ and $\chi$ be a Hecke character on $\A_{\wk_4}^\times$. 
The twisted theta integral against $\chi$ is
\begin{align}\label{theta wk4}
\theta_{\wk_4,\chi} (\gb,\phi) := \int_{[\SO(W)]} \theta_{\wk_4} (\gb,\beta h,\phi) \chi(\beta h) dh,\quad \gb \in \GL_2(\A_{\wf_2})^+,
\end{align}
where $\beta\in\A_{\wk_4}^\times$ satisfies $\nu (\beta) = \det \gb$.
Note that this integral doesn't depend on the choice of $\beta$.
We can naturally extend $\theta_{\wk_4,\chi} (\gb,\phi)$ to a function on the whole $\GL_2(\A_{\wf_2})$ by defining it to be zero outside $\GL_2(\A_{\wf_2})^+$.
It's easy to verify that $\theta_{\wk_4,\chi} (\gb,\phi)$ is an automorphic form on $\GL_2(\wf_2)\backslash\GL_2(\A_{\wf_2})$ with central character $\chi_{\wk_4/\wf_2} \cdot \chi$.
Let $W_{\wk_4,\chi}(\gb , \phi)$ be the Whittaker function of $\theta_{\wk_4,\chi} (\gb,\phi)$. If $\phi$ is clear in the context, we will omit it form the notation and write as $W_{\wk_4,\chi}(\gb)$.

\begin{proposition}\label{whittaker wk4}
Assume that the Schwartz function $\phi$ is a pure tensor $\phi=\prod_\fp \phi_\fp$. Then
\begin{enumerate}
\item   For $\gb \in \GL_2(\mathbb{A}_{\wf_2})$,  $W_{\wk_4,\chi}(\gb , \phi)$ decomposes into a product of local Whittaker functions on $\GL_2 ( \wf_{2,\mathfrak{p}} )$ given by
$$W_{\wk_4,\chi, \mathfrak{p}}(\gb_\mathfrak{p} , \phi_\mathfrak{p}) =
\int_{\wk^1_{4,\mathfrak{p}}} L\left(\beta_\mathfrak{p}\right) \omega \left(\gb_{2,\mathfrak{p}}\right) 
\phi_\mathfrak{p} \left(h_\mathfrak{p}^{-1}\right) \chi_\mathfrak{p} \left(\beta_\mathfrak{p} h_\mathfrak{p}\right) d h_\mathfrak{p}.$$
\item For $\xi \in \wf_2^{\times}, \gb \in \GL_2(\mathbb{A}_{\wf_2})$, we have
$$W_{\wk_4,\chi} \left(\pmat{\xi}{}{}{1} \gb , \phi\right)=0 \quad \text { if }\ \xi \notin \Nm_{\wk_4/\wf_2} \lb\wk_4^{\times}\rb.$$
\end{enumerate}
\end{proposition}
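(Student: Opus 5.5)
We compute $W_{\wk_4,\chi}(\gb,\phi)$ directly by unfolding, as in the classical computation of Whittaker coefficients of theta lifts of characters (cf. \cite[Section 13]{HK91}). First assume $\gb\in\GL_2(\A_{\wf_2})^+$; otherwise both sides of (1) and (2) vanish, by the convention that $\theta_{\wk_4,\chi}$ is zero off $\GL_2(\A_{\wf_2})^+$ and because the local Whittaker function vanishes when $\det\gb_\fp$ is not a local norm. Fix $\beta\in\A_{\wk_4}^\times$ with $\nu(\beta)=\det\gb$. Using \eqref{weil right}, write $\omega(\gb,\beta h)=L(\beta h)\omega(\gb_2)$ with $\gb_2=d(\det\gb^{-1})\gb\in\SL_2(\A_{\wf_2})$. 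Insert the theta series into the Whittaker integral and use $\omega(n(x))\varphi(y)=\psi_{\wf_2}(xq(y))\varphi(y)$, which commutes past $L(\beta h)$ after the appropriate conjugation by \eqref{commute law}. Then integrate over $x\in\wf_2\backslash\A_{\wf_2}$ first; interchanging sums and integrals is justified by compactness of $[\SO(W)]$ and rapid decay of $\phi$. This kills all terms except those $y\in\wk_4$ with $q(y)=1$, after accounting for the similitude normalisation, namely $q(\beta^{-1}y)$ matched against the character $\psi_{\wf_2}(-x)$.

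The set $\{y\in\wk_4: \Nm_{\wk_4/\wf_2}y=1\}$ is exactly $\wk_4^1=\SO(W)(\wf_2)$, which acts simply transitively on itself. We therefore unfold
\[
\int_{\SO(W)(\wf_2)\backslash\SO(W)(\A_{\wf_2})}\sum_{\gamma\in\wk_4^1}(\cdots)(\gamma)\,\chi(\beta h)\,dh
=\int_{\A_{\wk_4^1}} L(\beta)\omega(\gb_2)\phi(h^{-1})\,\chi(\beta h)\,dh,
\]
using $\chi(\gamma)=1$ for $\gamma\in\wk_4^1$, since $\chi$ is automorphic. This requires keeping track of whether $\beta$ acts on the left or right and whether $h$ or $h^{-1}$ appears, which is harmless because $\SO(W)$ is abelian. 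Since $\phi=\otimes\phi_\fp$, $\chi=\prod\chi_\fp$, $\beta=(\beta_\fp)$, and the measure on $\A_{\wk_4^1}$ is the product of local measures, the integral factors as the product of the stated local integrals. This proves (1). Almost all local factors equal $1$ at unramified places where $\phi_\fp$ is the characteristic function of $\Oc_{\wk_4,\fp}$, $\gb_\fp\in\GL_2(\Oc_\fp)$ and $\chi_\fp$ is unramified, so the product converges.

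For (2), replace $\gb$ by $t(\xi)\gb$. Then $\det$ changes by $\xi$, so $\beta$ must be replaced by some $\beta'$ with $\nu(\beta')=\xi\det\gb$, and $d(\cdot)$-conjugation moves $t(\xi)$ into the character: the computation above becomes a sum over $y\in\wk_4$ with $\Nm_{\wk_4/\wf_2}(y)=\xi$. If $\xi\notin\Nm_{\wk_4/\wf_2}(\wk_4^\times)$, this set is empty, so $W_{\wk_4,\chi}(t(\xi)\gb)=0$. Alternatively, if $\xi$ is not even locally a norm everywhere, $t(\xi)\gb\notin\GL_2(\A_{\wf_2})^+$ and the vanishing is immediate. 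If $\xi$ is locally a norm everywhere, then it is a global norm by the Hasse norm theorem for the cyclic extension $\wk_4/\wf_2$, so this case does not arise under the hypothesis.

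The main obstacle is bookkeeping in the first step. One must verify that after moving $n(x)$ through $L(\beta h)$ via \eqref{commute law} the phase becomes exactly $\psi_{\wf_2}(x(\Nm y-1))$ with the correct normalisation, and that the result is independent of the choice of $\beta$. The latter holds because changing $\beta$ by an element of $\A_{\wk_4^1}$ is absorbed by the $h$-integration.
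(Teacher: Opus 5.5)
Your part (1) is the paper's argument and is fine: integrate over $x$ first to cut the theta series down to $\{y:\Nm_{\wk_4/\wf_2}y=1\}=\wk_4^1$, then unfold against $[\SO(W)]$ using $\chi|_{\wk_4^1}=1$. Part (2), however, has a genuine gap.

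Your main line replaces $\beta$ by some $\beta'$ with $\nu(\beta')=\xi\det\gb$. For $\gb\in\GL_2(\A_{\wf_2})^+$ such a $\beta'$ exists iff $\xi$ is a local norm everywhere, i.e.\ (Hasse) iff $\xi\in\Nm_{\wk_4/\wf_2}(\wk_4^\times)$, which is exactly the excluded case. Moreover, when $\beta'$ does exist, the $x$-integral again selects $\Nm y=1$, not $\Nm y=\xi$.

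Your fallback is that $t(\xi)\gb\notin\GL_2(\A_{\wf_2})^+$, so everything vanishes by convention. This agrees with the sentence ``zero outside $\GL_2(\A_{\wf_2})^+$'' taken literally, but it makes (2) vacuous. It also rests on an extension that cannot be automorphic: a nonzero function supported on $\GL_2(\A_{\wf_2})^+$ is never invariant under left translation by $t(\xi)$, because $t(\xi)\GL_2(\A_{\wf_2})^+\cap\GL_2(\A_{\wf_2})^+=\emptyset$. Automorphy is asserted right after that sentence, and the later strong multiplicity one argument needs it. With the standard extension (by left invariance to $\GL_2(\wf_2)\GL_2(\A_{\wf_2})^+$, zero elsewhere, as in Harris--Kudla and Popa), the point $t(\xi)\gb$ lies in the support, so nothing vanishes by convention.

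The missing step is the paper's. Since $t(\xi)\in\GL_2(\wf_2)$ and $n(x)t(\xi)=t(\xi)n(\xi^{-1}x)$, left invariance gives $W_{\wk_4,\chi}(t(\xi)\gb)=\int_{\wf_2\backslash\A_{\wf_2}}\theta_{\wk_4,\chi}(n(x)\gb)\psi_{\wf_2}(-\xi x)\,dx$, the $\xi$-th coefficient at $\gb$. Now unfold with the \emph{original} $\beta$, $\nu(\beta)=\det\gb$. The $x$-integral then selects $y\in\wk_4$ with $\Nm_{\wk_4/\wf_2}y=\xi$, which is empty.

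Finally, your opening claim that (2) holds trivially for $\gb\notin\GL_2(\A_{\wf_2})^+$ is false. Take $\gb=t(\xi^{-1})$: then $W_{\wk_4,\chi}(t(\xi)\gb)=W_{\wk_4,\chi}(1)=\int_{\A_{\wk_4^1}}\phi(h^{-1})\chi(h)\,dh$, which is nonzero, for instance, for $\phi_0$. So (2) must be read for $\gb\in\GL_2(\A_{\wf_2})^+$, as the paper's proof tacitly does.
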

\begin{proof}
(1) The Whittaker function of $\theta_{\wk_4,\chi} (\gb,\phi)$ is
$$W_{\wk_4,\chi} (\gb , \phi)=\int_{\wf_2 \backslash \mathbb{A}_{\wf_2} } \theta_{\wk_4,\chi} (n(x)\gb,\phi) \psi_{\wf_2}(-x) dx.$$
Substituting the theta integral \eqref{theta wk4} and interchanging the order of integration,
$$W_{\wk_4,\chi} (\gb , \phi)= \int_{\wk_4^1 \backslash \mathbb{A}_{\wk_4^1} }
\int_{\wf_2 \backslash \mathbb{A}_{\wf_2}} \theta_{\wk_4} (n(x)\gb,\beta h,\phi) \psi_{\wf_2}(-x) d x \chi(\beta h) d h.$$
The inner integral is 
\begin{align*}
\int_{\wf_2 \backslash \mathbb{A}_{\wf_2}} \theta_{\wk_4} (n(x)\gb,\beta h,\phi) \psi_{\wf_2}(-x)  d x 
& =\sum_{t \in \wk_4} \int_{\wf_2 \backslash \mathbb{A}_{\wf_2}} 
\lb L(\beta) \omega\left(\gb_2\right) \phi \rb\left(h^{-1} t\right) \psi_{\wf_2}(x q(t)-x) d x \\
& =\sum_{t \in \wk_4^1} \lb L(\beta) \omega\left(\gb_2\right) \phi \rb \left(h^{-1} t\right).
\end{align*}
Since $\chi$ is trivial on $\wk_4^\times$, we obtain
$$W_{\wk_4,\chi} (\gb , \phi)= \int_{\A_{\wk_4^1}} 
\lb L\left(\beta\right) \omega \left(\gb_{2}\right) \phi \rb \left(h^{-1}\right) \chi\left(\beta h\right) d h.$$
This shows that $W_{\wk_4,\chi}$ is a product of the local Whittaker functions.

(2) Using the fact that $\theta_{\wk_4,\chi} (\gb,\phi)$ is left $\GL_2(\wf_2)$-invariant, as before, the inner integral of $W_{\wk_4,\chi} (\pmat{\xi}{}{}{1}\gb , \phi)$ is
\begin{align*}
&\int_{\wf_2 \backslash \mathbb{A}_{\wf_2}} 
\theta_{\wk_4} (n(x) \pmat{\xi}{}{}{1}  \gb,\beta h,\phi) \psi_{\wf_2}(-x)  d x \\
=&\int_{\wf_2 \backslash \mathbb{A}_{\wf_2}} 
\theta_{\wk_4} ( n(\xi^{-1}x) \gb,\beta h,\phi) \psi_{\wf_2}(-x)  d x \\
=& \sum_{t \in \wk_4} \int_{\wf_2 \backslash \mathbb{A}_{\wf_2}} 
\lb L(\beta) \omega\left(\gb_2\right) \phi \rb \left(h^{-1} t\right) \psi_{\wf_2}(x \xi^{-1} q(t)-x) d x,
\end{align*}
which vanishes if $\xi \notin\Nm_{\wk_4 / \wf_2}\left(\wk_4^\times\right)$ and proves part (2).    
\end{proof}

Similarly, the constant term of $\theta_{\wk_4,\chi} (\gb,\phi)$ is 
\begin{align*}
W_{\wk_4,\chi} (\gb, \phi)
:=\int_{\wf_2 \backslash \mathbb{A}_{\wf_2} } 
\theta_{\wk_4,\chi} (n(x)\gb,\phi) dx
= \lb L\left(\beta\right) \omega \left(\gb_{2}\right) \phi \rb \left(0\right) \chi(\beta) \int_{\wk_4^1\backslash\A_{\wk_4^1}} \chi\left( h\right) d h.
\end{align*}
Therefore, if $\chi$ is a non-trivial character on $\wk_4^1\backslash\A_{\wk_4^1}$, $\theta_{\wk_4,\chi} (\gb,\phi)$ is a Hilbert cusp form.

\subsubsection{Values at $T^1$ of Whittaker functions}
Let $\mathfrak{p}\subset\wf_2$ be a prime ideal.
If $\mathfrak{p}=\mathfrak{P}_1\mathfrak{P}_2$ is split in $\wk_4$, $\frob_{\mathfrak{P}_1}=(1,0,1)^{i_\fp}, \frob_{\mfp_2} = (1,0,1)^{5 i_\fp} \in \Gal(M_{32}/\wk_4)$, $i_\fp\in \Z/8\Z$, and hence $\frob_{\mathfrak{P}_1}\frob_{\mathfrak{P}_2}^{-1}= (1,0,1)^{4i_\fp}$.
When $\mathfrak{p}=\mathfrak{P}$ is inert in $\wk_4$, $\frob_{\mathfrak{p}}=(2,0,0)(1,0,1)^{i_\fp} \in \Gal(M_{32}/\wf_2)$. 
In this case, $\frob_{\mathfrak{P}} \in \Gal(M_{32}/\wk_4)$ equals $(i_\fp,i_\fp,0)$ if $i_\fp$ is even and $(i_\fp+2,i_\fp+2,0)$ if $i_\fp$ is odd.

\begin{proposition}\label{value T1 K4}
For any prime ideal $\mfp \subset\wk_4$, let $\phi_0 \in \Sc\left(\mathbb{A}_{\wk_4}\right)$ be the Schwartz function with local components 
\begin{align*}
\phi_{0,\mathfrak{P}}(x)= 
\begin{cases}
1_{\mathcal{O}_{\wk_{4,\mathfrak{P}}}}(x) & \text { if } \mathfrak{P} \text { is non-archimedean, } \\ 
\exp (-2\pi x\bar{x}) & \text { if } \mathfrak{P} \text { is archimedean. }  
\end{cases}
\end{align*}
Let $\chi$ be the character as in Lemma \ref{wk4 character}. 
For $\alpha\in \A_{\wf_2}^\times$, the values of local Whittaker functions in Proposition \ref{whittaker wk4} at $t(\alpha)$ are
\begin{align*}
\frac{W_{\wk_4,\chi, \mathfrak{p}}(t(\alpha))} 
{|\alpha|_{\wf_{2,\fp}}^{\frac{1}{2}}  
}= 
\begin{cases}
0 & \text { if }|\alpha|_\mathfrak{p}>1, i.e., \alpha\notin \Oc_{\wf_\fp} \\
\zeta_8^{i_\fp v_\fp(\alpha)} \lb v_{\mathfrak{p}}(\alpha)+1 \rb  & \text { if } \mathfrak{p}=\mathfrak{P}_1\mathfrak{P}_2 \text { is split and $i_\fp$ is even}\\
\zeta_8^{i_\fp v_\fp(\alpha)} \lb (-1)^{v_\mathfrak{p}(\alpha)} +1 \rb /2  & \text { if } \mathfrak{p}=\mathfrak{P}_1\mathfrak{P}_2 \text { is split and $i_\fp$ is odd}\\
\zeta_8^{i_\fp v_\fp(\alpha)} \lb (-1)^{v_\mathfrak{p}(\alpha)} +1 \rb /2  & \text { if } \mathfrak{p} \text { is inert and $i_\fp$ is even} \\
\zeta_8^{- i_\fp v_\fp(\alpha)} \lb (-1)^{v_\mathfrak{p}(\alpha)} +1 \rb /2  & \text { if } \mathfrak{p} \text { is inert and $i_\fp$ is odd} \\
( 1+\chi_{\wk_4/\wf_2}(\alpha_\fp) ) /2  & \text { if } \mathfrak{p}=\mathfrak{P}^2 \text { is ramified } \\
\frac{1+\sgn(\alpha)}{2}\exp (-2\pi \alpha) & \text { if } \mathfrak{p} \text { is archimedean. }  
\end{cases}
\end{align*}
\end{proposition}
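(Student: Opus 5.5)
We compute directly from the local formula in Proposition \ref{whittaker wk4}(1) with $\gb_\fp = t(\alpha)$. Here $\det t(\alpha)=\alpha$, so we choose $\beta_\fp\in \wk_{4,\fp}^\times$ with $\Nm\beta_\fp=\alpha$; when $\alpha$ is not a local norm the integral is zero by definition. Then $\gb_{2} = d(\alpha^{-1})t(\alpha)=m(\alpha)$ up to the convention of \eqref{weil right}, and $\omega(m(\alpha))\phi_{0,\fp}(x)=|\alpha|_\fp\chi_{\wk_4/\wf_2}(\alpha)\phi_{0,\fp}(\alpha x)$. Applying $L(\beta_\fp)$ gives $|\alpha|^{-1/2}\phi_{0,\fp}(\alpha\beta_\fp^{-1}x)$ up to that character. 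The local Whittaker value therefore reduces to
\begin{align*}
W_{\wk_4,\chi,\fp}(t(\alpha)) = |\alpha|_\fp^{1/2}\,\chi_{\wk_4/\wf_2}(\alpha)\int_{\wk^1_{4,\fp}} 1_{\Oc}\bigl(\alpha\beta_\fp^{-1}h^{-1}\bigr)\chi_\fp(\beta_\fp h)\,dh,
\end{align*}
with $\alpha\beta_\fp^{-1}=\bar\beta_\fp$. The condition $\bar\beta_\fp h^{-1}\in\Oc$ is the statement that $\beta_\fp h$ is integral. So the integral becomes a sum of $\chi_\fp$ over integral $y=\beta h$ with $\Nm y=\alpha$, taken modulo units of norm one.

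We then split into cases.

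Archimedean places: $\wk^1_{4,\fp}=S^1$ and $\chi_\infty$ is trivial, since Lemma \ref{wk4 character} shows the Artin image is trivial there. The Gaussian gives $\exp(-2\pi\alpha)$ when $\alpha>0$ and vanishes when $\alpha<0$, which produces the factor $(1+\sgn\alpha)/2$.

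Inert places: $y$ with $\Nm y=\alpha$ exists iff $v_\fp(\alpha)$ is even, giving $((-1)^{v}+1)/2$. The set $\{y\}$ is then a single $\Oc^1$-orbit and $\chi_\fp$ is trivial on $\wk^1_{4,\fp}$. The remaining phase comes from the character factor, evaluated on the uniformizer power via $\frob_\mfp=(i_\fp,i_\fp,0)$ or $(i_\fp+2,i_\fp+2,0)$ as listed above the proposition. For odd $i_\fp$ this reverses the sign of the exponent of $\zeta_8$.

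Split places: write $y=(\varpi^a u,\varpi^b u')$ with $a+b=v_\fp(\alpha)$ and $a,b\ge0$, so there are $v+1$ orbits. On $(\varpi^n,\varpi^{-n})$, Lemma \ref{wk4 character} gives $\chi_\fp=\psi(\tau^{4i_\fp n})=(-1)^{i_\fp n}$. Summing over $a$ yields $v+1$ when $i_\fp$ is even. When $i_\fp$ is odd we get the alternating sum $\sum_a(-1)^a$, which equals $((-1)^v+1)/2$ up to the overall phase. In both cases the phase is $\chi(\beta_\fp)\chi_{\wk_4/\wf_2}(\alpha)=\zeta_8^{i_\fp v}$, computed from $\frob_{\mfp_1}=(1,0,1)^{i_\fp}$ and the fact that $\psi(\tau)=\zeta_8$.

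Ramified places: $\chi_\fp$ is trivial. The norm-one group acts transitively on elements of a given norm, so the integral is $1$ if $\alpha$ is a norm from integral elements and $0$ otherwise. This gives $(1+\chi_{\wk_4/\wf_2}(\alpha))/2$, after normalizing the measure.

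In every case, $v_\fp(\alpha)<0$ forces the integrand to vanish, which gives the first line of the statement.

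The main obstacle is the bookkeeping of the phase. We must make sure that $\chi(\beta_\fp)$, which is a priori an order-8 character value since $\chi$ here is evaluated on $\A_{\wk_4}^\times$ via $\psi$, combines with $\chi_{\wk_4/\wf_2}(\alpha)$ into exactly $\zeta_8^{\pm i_\fp v}$ with the stated signs. This needs the Frobenius descriptions listed before the proposition, and a consistent choice of measure so that $\mathrm{vol}(\Oc^1)=1$.
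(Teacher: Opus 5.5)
Your proposal is the paper's proof. Both reduce via Proposition \ref{whittaker wk4}(1) to $|\alpha|_\fp^{1/2}\chi_{\wk_4/\wf_2}(\alpha)$ times the integral of $1_{\Oc}(\alpha\beta_\fp^{-1}h^{-1})$ against $\chi_\fp(\beta_\fp h)$, then run the same split/inert/ramified/archimedean case analysis with phases read off from the listed Frobenius elements; the paper pulls $\chi(\beta_\fp)$ outside while you sum $\chi$ over integral $y=\beta_\fp h$, which is the same computation. One caveat, shared with the paper: at ramified places Lemma \ref{wk4 character} only makes $\chi$ trivial on $\wk^1_{4,\fp}$, so dropping the factor $\chi_\fp(\beta_\fp)$ to get $(1+\chi_{\wk_4/\wf_2}(\alpha))/2$ tacitly requires $\chi$ to be trivial on all of $\wk_{4,\mfp}^\times$, in particular on a uniformizer, and your line ``$\chi_\fp$ is trivial'' asserts this without justification.
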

\begin{remark}
Note that, in the previous formula, $W_{\wk_4,\chi}(t(\alpha))$ is zero if $\alpha\notin \Nm_{\wk_4/\wf_2} \A_{\wf_2}^\times$.
And $\phi_{0,\infty}$ gives rise to the Gaussian $\phi_\infty^{1,1}$ of weight $(1,1)$.
\end{remark}
\begin{proof}
In the proof, we briefly denote $\phi_0$ as $\phi$. 
Suppose $\alpha=\Nm\beta, \beta\in \A_{\wk_4}$. The value $W_{\wk_4,\chi}(t(\alpha))$ is
\begin{align*}
\int_{\A_{\wk_4^1}}  \omega \left(\pmat{\alpha}{}{}{1}, \beta \right) \phi\left(h^{-1}\right) \chi\left(\beta h\right) d h
=& |\alpha|_{\A_{\wf_2}}^{\frac{1}{2}} \chi_{\wk_4/\wf_2}(\alpha) \chi\left(\beta \right)
\int_{\A_{\wk_4^1}}   \phi\left(\alpha \beta^{-1} h^{-1}\right) \chi (h) d h.
\end{align*}
(1) When $\mathfrak{p}=\mathfrak{P}_1\mathfrak{P}_2$ is split in $\wk_4$, $\frob_{\mathfrak{P}_1}\frob_{\mathfrak{P}_2}^{-1}= (1,0,1)^{4i_\fp}$ and $\psi (1,0,1)^{4i_\fp}=(-1)^{i_\fp}$.
Let $\beta_\fp = (1,\alpha_\fp) \in \wk_{4,\fp} \cong \wf_{2,\fp} \times \wf_{2,\fp}$. Then
\begin{align*}
&\int_{\wk_{4,\mathfrak{p}}^1}  \phi_\mathfrak{p}\left(\alpha_\fp \beta_\fp^{-1} h^{-1}\right) \chi_\fp (h) d h
= \int_{\wf_{2,\mathfrak{p}}^\times}  \phi_\mathfrak{p}\left(\alpha_\fp h,h^{-1}\right) \chi_\fp (h,h^{-1})  d^\times h\\
=& \int_{\alpha_\fp^{-1}\mathcal{O}_{\wf_2,\mathfrak{p}}-\mathfrak{p}\mathcal{O}_{\wf_2,\mathfrak{p}}} (-1)^{i_\fp v_\fp(h)}   d^\times h
= \begin{cases}
 v_\mathfrak{p}(\alpha) +1  & \text{ if $i_\fp$ is even and } v_\mathfrak{p}(\alpha) \geq 0\\
\lb (-1)^{v_\mathfrak{p}(\alpha)} +1 \rb /2 & \text{ if $i_\fp$ is odd and } v_\mathfrak{p}(\alpha) \geq 0\\
0 & \text{ if } v_\mathfrak{p}(\alpha) < 0
\end{cases}.
\end{align*}
(2) When $\mathfrak{p}=\mathfrak{P}$ is inert in $\wk_4$, let $\varpi_\mathfrak{p}$ be the uniformizer of $\wf_{2,\mathfrak{p}}$. 
Suppose $\alpha_\fp=\varpi_\mathfrak{p}^{2i} u, u\in \Oc_{\wf_2}^\times,i\in\Z$, and $\beta_\fp =\varpi_\mathfrak{p}^{i} w \in \wk_{4,\mathfrak{P}},w\in \Oc_{\wk_4}^\times,$ such that $\Nm\beta_\fp=\alpha_\fp$. Then 
\begin{align*}
&\int_{\wk_{4,\mathfrak{p}}^1}  \phi_\mathfrak{p}\left(\alpha_\fp \beta_\fp^{-1} h^{-1}\right) d h
= \int_{\wk_{4,\mathfrak{p}}^1}  \phi_\mathfrak{p}\left( \varpi_\mathfrak{p}^{i} uw^{-1} h^{-1} \right)  d h
=
\begin{cases}
1, & \text{if } i\geq 0\\
0, & \text{otherwise }
\end{cases}.
\end{align*}
(3) When $\mathfrak{p}=\mathfrak{P}^2$ is ramified in $\wk_4$, let $\varpi_\mathfrak{p}$ be the uniformizer of $\wf_{2,\mathfrak{p}}$ and $\pi_\mathfrak{P}$ be the uniformizer of $\wk_{4,\mathfrak{P}}$. 
Suppose $\alpha_\fp=\varpi_\fp^{i} u$ and $\beta_\fp =\pi_\mathfrak{P}^{i} w \in \wk_{4,\mathfrak{P}}$ such that $\Nm\beta_\fp=\alpha_\fp$. Then
\begin{align*}
&\int_{\wk_{4,\mathfrak{p}}^1}  \phi_\mathfrak{p}\left(\alpha_\fp \beta_\fp^{-1} h^{-1}\right) d h
= \int_{\wk_{4,\mathfrak{P}}^1}  \phi_\mathfrak{p}\left( \pi_\mathfrak{P}^{i} uw^{-1} h^{-1} \right)  d h
=
\begin{cases}
1, & \text{if } i\geq 0\\
0, & \text{otherwise }
\end{cases}.
\end{align*}
(4) When $\mathfrak{p}=\infty$, $\wk_{4,\fp}\cong \C$ and $\wk_{4,\fp}^1 \cong S^1$ is the unit circle.
Suppose $\alpha_\fp = \beta_\fp \overline{\beta_\fp}$. Then
\begin{align*}
&\int_{\wk_{4,\mathfrak{p}}^1}  \phi_\mathfrak{p}\left(\alpha_\fp \beta_\fp^{-1} h^{-1}\right) d h
= \int_{S^1}  \phi_\fp \left( \overline{\beta_\fp} h^{-1} \right)  d h = \exp (-2\pi \alpha).
\end{align*}
Putting these cases and $\chi(\beta)$ together finish the proof.
\end{proof}

In general, it's easy to see that the value of Whittaker functions of $\theta_{\wk_4,\chi}(\gb,\phi)$ lie in the subfield $\Q(i,\phi)$ of $\mathbb{C}$ generated by the values $\phi(x)$, $x \in W(\A_{\wf_2})$ and $i$. 
Moreover, when $\gb\in \SL_2(\A_{\wf_2})$, for $\alpha\in \wf_2^\times$, the $\alpha$-th Whittaker coefficient
\begin{align}\label{sl2 whittaker}
W_{\wk_4,\chi}(t(\alpha),\phi) =& \int_{\A_{\wk_4^1}}   \phi\left(\alpha \beta^{-1} h^{-1}\right) \chi (h) d h
\end{align}
lies in $\Q(\phi)$. 
Eisenstein series also have similar properties, see \cite[Proposition 4.6]{BKY12}.

\subsection{Hecke's integral}
We consider the the quadratic space $U=F$ over $\Q$ with quadratic form $a_2\Nm_{F/\Q}$, $a_2\in\Q$.
Similarly as in the previous section, for $\Xi\in \Sc(U)(\A)$, the theta lift of the Hecke character $\rho$ in Lemma \ref{matching central characters} on $F^\times \backslash \A_{F}^\times$ is
\begin{align*}
\theta_{\rho} (g,\Xi) := \int_{F^1\backslash\A_{F^1}} \theta (g,\lambda h,\Xi) \rho(\lambda h) dh,\quad g\in\GL^+_2(\A),
\end{align*}
where $F^1$ is the kernel of $\Nm_{F/\Q}$, $\lambda\in\A_F^\times$ such that $\det g = \Nm_{F/\Q} \lambda$, and
\begin{align*}
\GL^+_2(\A) = \{ g\in\GL_2(\A) : \exists\ \lambda\in \A_F^\times \text{ such that } \det g = \Nm_{F/\Q} \lambda \}.
\end{align*}
We also extend $\theta_{\rho} ( g ,\Xi)$ to an automorphic form on $\GL_2(\A)$ by defining it to be 0 outside $\GL^+_2(\A)$.
It's easy to verify that $\theta_{\rho} ( g ,\Xi)$ is an elliptic cusp form and has central character $\chi_{F/\Q} \cdot \rho $.
Such theta integral is called Hecke's integral, for more information, see \cite{BLY22} and \cite{CL20}. 
As in the previous section, the Whittaker functions have the following properties, see also \cite[Proposition 4.1.2]{popa06}.

\begin{proposition}\label{whittaker wf2}
Assume that the Schwartz function $\Xi$ is a pure tensor $\Xi=\prod_p \Xi_p$, and let $W_{F,\rho}(g , \Xi)$ be the Whittaker coefficient of $\theta_{\rho} (g,\Xi)$. Then
\begin{enumerate}
\item   
For $g \in \GL_2(\mathbb{A})$,  $W_{F,\rho}(g,\Xi)$ decomposes into a product of local Whittaker functions on $\GL_2 (\Q_p)$ given by
$$W_{F,\rho}(g_p , \Xi_p) =
\int_{F^1_p} \lb L\left(\lambda_p\right) \omega \left(g_{2,p}\right) \Xi_p \rb\left(h_p^{-1}\right) \rho_p \left(\lambda_p h_p \right) d h_p.$$
\item 
For $\xi \in \Q^{\times}$ and $g \in \GL_2(\mathbb{A})$, we have
$$W_{F,\rho} \left(\pmat{\xi}{}{}{1} g,\Xi\right)=0 \quad \text { if }\ \xi \notin \Nm_{F / \Q}\left(F^{\times}\right) .$$
\end{enumerate}
\end{proposition}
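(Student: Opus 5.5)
The proof will copy the argument of Proposition \ref{whittaker wk4}, with $(\wk_4/\wf_2,\chi)$ replaced by $(F/\Q,\rho)$ and $\psi_{\wf_2}$ replaced by the standard character $\psi$ on $\A$. For (1), take $g\in\GL_2^+(\A)$ (outside $\GL_2^+(\A)$ both sides vanish by definition) and choose $\lambda\in\A_F^\times$ with $\Nm_{F/\Q}\lambda=\det g$. By \eqref{weil right}, the theta kernel is $\theta(g,\lambda h,\Xi)=\sum_{t\in F}\bigl(L(\lambda h)\omega(g_2)\Xi\bigr)(t)$ with $g_2=d(\det g^{-1})g\in\SL_2(\A)$. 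We insert this into
\begin{align*}
W_{F,\rho}(g,\Xi)=\int_{\Q\backslash\A}\theta_\rho(n(x)g,\Xi)\psi(-x)\,dx
\end{align*}
and interchange the $x$- and $h$-integrals; this is justified by compactness of $F^1\backslash\A_{F^1}$ and rapid decay of the theta series. Next, $n(x)$ commutes past $d(\det g^{-1})$ up to the scaling $x\mapsto x\det g^{-1}$. That scaling is absorbed by the similitude $L(\lambda)$ via \eqref{commute law}, so $n(x)$ acts on the term indexed by $t$ through the character $\psi(x\,a_2\Nm(t))$ (up to the normalisation of $Q$). Orthogonality of characters on $\Q\backslash\A$ then picks out the $t\in F$ with $a_2\Nm t=1$, which form a single $F^1$-orbit when nonempty; after normalising $a_2$ this orbit is $F^1$ itself. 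We unfold the sum over $t\in F^1$ against the quotient $F^1\backslash\A_{F^1}$, using that $\rho$ is trivial on $F^\times$ and that $L(h)$ acts by $\Xi(h^{-1}\cdot)$ for $h\in\A_{F^1}$. This gives
\begin{align*}
W_{F,\rho}(g,\Xi)=\int_{\A_{F^1}}\bigl(L(\lambda)\omega(g_2)\Xi\bigr)(h^{-1})\,\rho(\lambda h)\,dh .
\end{align*}
For pure tensors $\Xi=\prod_p\Xi_p$ the integrand factorises, since $\rho=\prod_p\rho_p$, $\lambda=(\lambda_p)$ and $\omega,L$ are restricted tensor products. This yields the stated product of local integrals, provided the Haar measure on $\A_{F^1}$ is the product of local measures. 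At almost all $p$ the local factor equals $1$ for unramified data, so the product converges.

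For (2), we use left $\GL_2(\Q)$-invariance of $\theta_\rho$, which is part of the assertion that it is an automorphic form. This gives $\theta_\rho(n(x)t(\xi)g)=\theta_\rho(n(\xi^{-1}x)g)$. Redoing the inner computation, the $x$-integral now selects the $t\in F$ with $\xi^{-1}a_2\Nm(t)=1$. If $\xi\notin\Nm_{F/\Q}(F^\times)$ (after absorbing $a_2$), no such $t$ exists, so every term vanishes and $W_{F,\rho}(t(\xi)g,\Xi)=0$. We also need that $t(\xi)g\in\GL_2^+(\A)$ whenever $g$ is; otherwise the vanishing holds trivially by the extension by zero.

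The main obstacle is the bookkeeping in (1): checking that moving $n(x)$ through $d(\det g^{-1})$ and $L(\lambda)$ produces exactly the character $\psi(xQ(t))$ and not a twisted version. This rests on \eqref{commute law} together with $\nu(\lambda)=\Nm\lambda=\det g$. A second check is that the result does not depend on the choice of $\lambda$, which follows from the substitution $h\mapsto\lambda^{-1}\lambda' h$ inside the $\A_{F^1}$-integral. Everything else runs exactly as in Proposition \ref{whittaker wk4}.
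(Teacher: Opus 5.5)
Your proposal is correct and is the paper's argument: the paper omits the proof as ``the same as Proposition \ref{whittaker wk4}'', and you carry out exactly that unfolding with $(\wk_4/\wf_2,\chi,\psi_{\wf_2})$ replaced by $(F/\Q,\rho,\psi)$.

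The one thing to fix is your aside at the end of (2). For $g\in\GL_2^+(\A)$ and $\xi\notin\Nm_{F/\Q}(F^\times)$, the Hasse norm theorem forces $t(\xi)g\notin\GL_2^+(\A)$, so the claim that $t(\xi)g\in\GL_2^+(\A)$ whenever $g$ is fails precisely in the case you need. Moreover, (2) is only established, in your argument as in the paper's, and only true, for $g\in\GL_2^+(\A)$: taking $g=t(\xi)^{-1}g_0$ with $W_{F,\rho}(g_0,\Xi)\neq 0$ violates it.
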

\begin{proof}
The proof is the same as Proposition \ref{whittaker wk4}, so we omit it here.
\end{proof}

\subsection{Doi-Naganuma lift}
Recall that $V$ is the quadratic space $\Q^2\oplus\wf_2$ with quadratic form $Q(x,y,\nu) := a_1xy-a_1\Nm \nu$ for $a_1\in\Q$.
For $\varphi\in \Sc (V(\A))$, the Doi-Naganuma lift of Hecke's integral is
\begin{align*}
I(\hb,\varphi,\Xi,\rho) = \int_{[G]} \theta_V ( g d(\nu(\hb)),\hb,\varphi) 
\int_{F^1\backslash \A_{F^1}} \theta ( g d(\nu(\hb)),  h \lambda,\Xi) \rho ( h \lambda ) dh dg,\ \hb\in \GO(V)(\A),
\end{align*}
where $[G] = G(\Q)\backslash G(\A)$ and $\lambda\in \A_{F}^\times$ satisfies $\nu(\lambda)=\nu(\hb)$. 
This definition doesn't depend on the choice of $\lambda$. 
Using the embedding $i$ \eqref{embedding GL2}, $I(\hb,\varphi,\Xi,\rho)$ is an adelic Hilbert modular form on $\GL_2(\A_{\wf_2})$ and left $\GL_2(\wf_2)$-invariant as in \cite[Equation (2.52)]{HL25}.
In the following, we will assume $\hb\in \GL_2(\A_{\wf_2})$, and by a slightly abuse of notation, we briefly write $I(\hb,\varphi,\Xi,\rho) = I(i(\hb),\varphi,\Xi,\rho)$.
It's easy to show that the central character of $I(\hb,\varphi,\Xi,\rho)$ is $(\chi_{\wf_2/\Q} \chi_{F/\Q} \rho^{-1}) \circ \Nm_{\wf_2/\Q}$.

The Whittaker function $W(\hb) = W(\hb,\varphi,\Xi,\rho)$ of $I(\hb,\varphi,\Xi,\rho)$ is
\begin{align*}
 & \int_{\wf_2\backslash \A_{\wf_2}} \int_{[G]} \theta_{V} ( g d(\nu(\hb)),n(x)\hb,\varphi) 
\int_{F^1\backslash \A_{F^1}} \theta ( g d(\nu(\hb)),  h \lambda,\Xi) \rho ( h \lambda ) dh dg \psi_{\wf_2} (- x) dx\\
&= \int_{F^1\backslash \A_{F^1}} \int_{\wf_2\backslash \A_{\wf_2}} 
\int_{[G]} \theta_{V} ( g d(\nu(\hb)),n(x)\hb,\varphi) \theta ( g d(\nu(\hb)), h \lambda,\Xi) dg \psi_{\wf_2} (- x) dx  \rho (h\lambda) dh.
\end{align*}
A set of representatives of $\Q^2$ under the right multiplication of $\SL_2(\Q)$ is given by $(0,0)$ with stabilizer $\SL_2(\Q)$ and $(0,1)$ with stabilizer $N(\Q)$. 
Applying the partial Fourier transform, we can rewrite
\begin{align*}
&\theta_{V} ( g d(\nu(\hb)),n(x) \hb,\varphi) 
= \theta_{V} (1,1,\omega (g)L\lb n(x) \hb \rb \varphi)\\
=& \sum_{\nu \in \wf_2} \lb \omega_0(g) \Fc (L( n(x)\hb )\varphi) \rb \lb (0,0), \nu  \rb 
+\sum_{\nu \in \wf_2} \sum_{\substack{\gamma\in N(\Q) \backslash \SL_2(\Q)}} 
\lb \omega_0(g) \Fc (L( n(x)\hb )\varphi) \rb \lb (0,1)\gamma g, \nu  \rb,
\end{align*}
where $\omega_0$ is the Weil representation on the quadratic space $\wf_2$ with quadratic form $-a_1\Nm_{\wf_2/\Q}$.
The inner integral of $W(\hb)$ over $[G]$ becomes
\begin{align}\label{inner whittaker one}
&\int_{[G]} \theta_{V} ( g d(\nu(\hb)),n(x)\hb,\varphi) \theta ( g d(\nu(\hb)), h \lambda,\Xi) dg \notag\\
=& \int_{[G]} \sum_{\nu \in \wf_2} \lb \omega_0(g)
\Fc (L( n(x)\hb )\varphi) \rb \lb (0,0), \nu  \rb 
\times \sum_{\beta\in F} \lb \omega \lb g \rb L\lb h\lambda \rb \Xi \rb (\beta) dg\\
+&\int_{[G]} \sum_{\nu \in \wf_2} \label{inner whittaker two}
\sum_{\substack{\gamma\in N(\Q) \backslash \SL_2(\Q)}} 
\lb \omega_0(g) \Fc (L( n(x)\hb )\varphi) \rb \lb (0,1)\gamma g, \nu  \rb
\times \sum_{\beta\in F} \lb \omega \lb g \rb L\lb h\lambda \rb \Xi \rb (\beta) dg.
\end{align}
Suppose the Schwartz function $(\varphi,\Xi)$ is invariant under the Weil representation on $K=\SL_2(\widehat{\Z})\SO_2(\R)$. 
Then the integrand
$\theta_{V} ( g d(\nu(\hb)),n(x)\hb,\varphi)  \theta ( g d(\nu(\hb)), h \lambda,\Xi)$ is right invariant under $d(\nu(\hb)) \SL_2(\widehat{\Z}) \SO_2(\R) d(\nu(\hb))^{-1}$.
We normalize the Haar measures such that $\vol (K) = \vol (\widehat{\Oc}_{\wf_2}^\times) = 1$.
Using the modified Iwasawa decomposition
\begin{align*}
\SL_2(\A) = N(\A)M(\A^\times) d(\nu(\hb)) K d(\nu(\hb))^{-1} ,\quad   
g =  n(b)m(a)d(\nu(\hb)) k d(\nu(\hb))^{-1}  
\end{align*}
and Lemma \ref{weil mixed model}, the integral \eqref{inner whittaker two} equals  
\begin{align*}
&\int_{N(\Q)\backslash G(\A)} \sum_{\nu \in \wf_2} \omega_0(g)
\Fc (L( n(x)\hb )\varphi) \lb (0,1) g, \nu  \rb
\times \sum_{\beta\in F} \omega \lb g \rb L\lb h\lambda \rb \Xi(\beta) dg\\
=& \int_{N(\Q)\backslash G(\A)} \sum_{\nu \in \wf_2} 
\Fc ( L( n(x)\hb) \omega (d(\nu(\hb))^{-1}g d(\nu(\hb)) \varphi) \lb (0,1), \nu  \rb \\
&\quad \times \sum_{\beta\in F} L\lb h\lambda \rb \omega (d(\nu(\hb))^{-1}g d(\nu(\hb)) \Xi(\beta) dg\\
=& |\nu(\hb)| \int_{\Q\backslash\A } \int_{ \A_f^\times \times \R^+} \sum_{\nu \in \wf_2} 
\Fc ( \omega(n(b)m(a)) L( n(x)\hb) \varphi) \lb (0,1), \nu  \rb \\
&\quad \times \sum_{\beta\in F} \omega(n(b)m(a)) L\lb h\lambda \rb  \Xi(\beta) db \frac{d^\times a}{|a|^2}\\
\end{align*}
by \eqref{weil left} and \eqref{weil right}. Applying Lemma \ref{weil mixed model} again, we obtain 
\begin{align*}
& |\nu(\hb)| \int_{\Q\backslash\A } \int_{\A_f^\times \times \R^+}  
\sum_{\nu \in \wf_2} \psi (-b a_1\Nm (\nu)) \chi_{\wf_2}(a) |a|   
\Fc (L( n(x)\hb )\varphi) \lb (0,a^{-1}), a\nu \rb \\
&\qquad\times \sum_{\beta\in F} \psi (b a_2\Nm (\beta)) \chi_F(a) |a|  L\lb h\lambda \rb \Xi(a\beta) db \frac{d^\times a}{|a|^2}\\
=& |\nu(\hb)| \int_{\A_f^\times \times \R^+} \chi_{\wf_2}(a) \chi_F(a) 
\sum_{\nu \in \wf_2}  \Fc (L( n(x)\hb )\varphi) \lb (0,a^{-1}), a\nu \rb 
\sum_{\substack{\beta\in F \\ \Nm (\beta)=\frac{a_1}{a_2} \Nm(\nu)}}  L\lb h\lambda \rb \Xi(a\beta)  d^\times a .
\end{align*}
The integral of the first summation over $\wf_2\backslash \A_{\wf_2}$ is
\begin{align*}
&\int_{\wf_2\backslash \A_{\wf_2}}  
\sum_{\nu \in \wf_2}  \Fc (L( n(x)\hb )\varphi) \lb (0,a^{-1}), a\nu \rb  \psi_{\wf_2} (-x) dx \\
=&  \int_{\wf_2\backslash \A_{\wf_2}}  
\sum_{\nu \in \wf_2} \e (a_1 \tr (x\nu)) \Fc (L( \hb )\varphi) \lb (0,a^{-1}), a\nu \rb  \psi_{\wf_2} (- x) dx \\
=& \Fc (L( \hb )\varphi) \lb (0,a^{-1}), a/a_1 \rb,
\end{align*}
where we used Lemma \ref{weil mixed model} in the first equality and only $\nu=1/a_1$ survives in the second equality.
Hence the contribution of \eqref{inner whittaker two} in the Whittaker function is  
\begin{align*}
&|\nu (\hb)| \int_{F^1\backslash \A_F^1} \int_{\A_f^\times \times \R^+}  \chi_{\wf_2}(a)
\Fc (L (\hb) \varphi)\lb (0,a^{-1}),a/a_1 \rb
\times \chi_F(a) \sum_{\beta\in \beta_0 F^1}   (L(h\lambda) \Xi)(a\beta) d^\times a  \rho (h\lambda) dh\\
=& |\nu (\hb)| \int_{\A_F^1} \int_{\A_f^\times \times \R^+} \chi_{\wf_2}(a) \chi_F(a) \Fc (L (\hb) \varphi)\lb (0,a^{-1}),a/a_1 \rb
\times (L(h\lambda) \Xi)(a \beta_0)  d^\times a  \rho (h\lambda) dh
\end{align*}
since $\rho$ is trivial on $F^\times$, where $\beta_0\in F$ satisfies $\Nm\beta_0 = \frac{1}{a_1a_2}$. 
If such $\beta_0$ doesn't exist, the Whittaker function becomes zero.
By a similar argument, the integral \eqref{inner whittaker one} contributes 0 to the Whittaker function. 
Putting these together, we get
\begin{align}\label{whittaer doi}
W(\hb) = |\nu (\hb)|
\int_{\A_{F^1}} \int_{\A_f^\times \times \R^+} 
\chi_{\wf_2}(a) \chi_F(a) \Fc (L (\hb) \varphi)\lb (0,a^{-1}),a/a_1 \rb
\times (L(h\lambda) \Xi)(a \beta_0)  d^\times a  \rho (h\lambda) dh.
\end{align}
If the level of $(\varphi,\Xi)$ is $\Gamma$, an open compact subgroup of $\SL_2(\widehat{\Z})$, the $(\varphi,\Xi)$ in previous formula needs to be replaced by $ \sum\limits_{\gamma \in \Gamma \backslash \SL_2(\widehat{\Z})} \lb \omega(\gamma) \varphi , \omega(\gamma) \Xi \rb$.

\subsubsection{Values at $T^1$ of Whittaker functions}
Suppose $\alpha\in \A_{\wf_2}^\times$ and $\lambda\in \A_F^\times$ satisfy $\Nm\ \alpha=\Nm\ \lambda^{-1}$. 
Let $\hb = i\lb t(\alpha)\rb = \lb t(\alpha), \Nm\ \alpha \rb$. By definition,
\begin{align*}
&\Fc (L (\hb) \varphi)\lb \eta_1,\eta_2,\nu \rb
=|\Nm \alpha|_{\A} \Fc (\varphi)\lb \eta_1 \Nm\alpha,\eta_2,\nu \alpha^{\prime}\rb.
\end{align*}
Substituting into \eqref{whittaer doi}, we obtain
\begin{align}\label{whittaer doi t(alpha)}
\begin{split}
W\lb \hb \rb
=& |\Nm \alpha|_{\A}^{1/2}  \int_{\A_{F^1}} \int_{\A_f^\times \times \R^+} 
\chi_{\wf_2}(a) \chi_F(a) \Fc (\varphi)\lb (0,a^{-1},a\alpha^{\prime} /a_1 \rb 
 \times \Xi(a \beta_0 h^{-1} \lambda^{-1})  d^\times a \rho(h\lambda) dh.
\end{split}
\end{align}
Let $\mfp \subset M_{32}$ be a prime ideal lying over a rational prime $p\in\Q$ unramifed in $M_{32}$, and denote by $\bfrob_\mfp = (1,0,0)^a (1,0,1)^b\in\Gal(M_{32}/\Q)$ its image under the Artin map.

\begin{proposition}\label{value T1 doi}
For a rational prime $p\in\Q$ unramifed in $M_{32}$, let $a_{1,p} = b_{1,p} =1$ and the Schwartz function 
\begin{align*}
(\varphi_{0,p},\Xi_{0,p}) = 
\cha (\Z_p^2 \oplus \mathcal{O}_{\wf_2,p}) \otimes \cha (\mathcal{O}_{F,p}).
\end{align*}
Then, the integral 
\begin{align*}
&\widetilde{\phi}_p(\alpha):=\int_{F_p^1} \int_{\Q_p^\times} \chi_{\wf_2}(a) \chi_F(a) 
\Fc (\varphi_{0,p})\lb (0,a^{-1}),a\alpha^{\prime} \rb
\times \Xi_{0,p}(a h^{-1} \lambda^{-1}) d^\times a \rho(h\lambda) dh\\
=&\begin{cases}
\zeta_8^{b(v_{\fp_1}(\alpha)+v_{\fp_2}(\alpha))} (v_{\mathfrak{p}_1}(\alpha)+1)(v_{\mathfrak{p}_2}(\alpha)+1)  
& \text { if } 4 \mid a \text{ and } 2\mid b \\
\zeta_8^{b(v_{\fp_1}(\alpha)+v_{\fp_2}(\alpha))} \frac{(-1)^{v_{\mathfrak{p}_1}(\alpha)}+1}{2} \frac{(-1)^{v_{\mathfrak{p}_2}(\alpha)}+1}{2} 
& \text { if } 2 \mid a,\ 4 \nmid a \text{ and } 2\mid b \\
\frac{(-1)^{v_{\fp}(\alpha)}+1}{2} 
& \text { if } 2 \nmid a \text{ and } 2\mid b\\
\zeta_8^{b(v_{\fp_1}(\alpha)+v_{\fp_2}(\alpha))} \frac{(-1)^{v_{\mathfrak{p}_1}(\alpha)}+1}{2} \frac{(-1)^{v_{\mathfrak{p}_2}(\alpha)}+1}{2} 
& \text { if } 4 \mid a \text{ and } 2\nmid b\\
\zeta_8^{-b(v_{\fp_1}(\alpha)+v_{\fp_2}(\alpha))} \frac{(-1)^{v_{\mathfrak{p}_1}(\alpha)}+1}{2} \frac{(-1)^{v_{\mathfrak{p}_2}(\alpha)}+1}{2} 
& \text { if } 2 \mid a,\ 4\nmid a \text{ and } 2\nmid b\\
i^{(a+b) v_\fp(\alpha)} (v_{\fp}(\alpha)+1)
& \text { if } 2 \nmid a \text{ and } 2\nmid b
\end{cases}.
\end{align*}
Here we normalize the Haar measures such that $\vol (\Z_p^\times) = 1$, and if $p$ is inert in $F$, $\vol (F_p^1)=1$.
\end{proposition}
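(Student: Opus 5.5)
The plan is a direct local computation: first evaluate the partial Fourier transform of $\varphi_{0,p}$, then separate the two integrals. From the definition of $\Fc$ with $\varphi_{0,p}=\cha(\Z_p^2\oplus\Oc_{\wf_2,p})$ and $a_1=1$,
\begin{align*}
\Fc(\varphi_{0,p})\lb (0,a^{-1}),a\alpha'\rb=\cha(\Z_p)(a^{-1})\,\cha(\Oc_{\wf_2,p})(a\alpha').
\end{align*}
Here the $x$-integral of $\cha(\Z_p)$ against $\psi(-x\eta_2)$ is just $\cha(\Z_p)(\eta_2)$, since $\Z_p$ is self-dual and has volume $1$. Thus the $a$-integral is supported on $v_p(a)\le 0$ with $a\alpha'\in\Oc_{\wf_2,p}$. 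Writing $a=p^{-n}u$ with $u\in\Z_p^\times$, the characters $\chi_{\wf_2},\chi_F$ are unramified at $p$. So $\chi_{\wf_2}(a)\chi_F(a)=(\chi_{\wf_2}\chi_F)(p)^{-n}$, and everything reduces to a finite sum over $n$ with $0\le n\le\min_i v_{\fp_i}(\alpha)$ (respectively $n\le v_\fp(\alpha)$ when $p$ is inert in $\wf_2$). The $\Xi$-factor imposes a further condition $p^{-n}h^{-1}\lambda^{-1}\in\Oc_{F,p}$, weighted by $\rho(h\lambda)$.

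Next comes the $h$-integral over $F_p^1$, split according to whether $p$ is split or inert in $F$; that is, according to the parity of $b$ in $\bfrob_\mfp=(1,0,0)^a(1,0,1)^b$, as in the splitting table of Section 2.

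\emph{Inert case ($2\nmid b$).} Here $F_p^1$ is compact of volume $1$, and $\rho$ is unramified and trivial on $F_p^1$. The integral collapses to $\rho(\lambda)\cha(\Oc_{F,p})(p^{-n}\lambda^{-1})$. With $\Nm\lambda=\Nm\alpha^{-1}$ this forces $v_p(\lambda)=-\tfrac12 v_p(\Nm\alpha)$.

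\emph{Split case ($2\mid b$).} Here $F_p^1\cong\Q_p^\times$ via $h\mapsto(h,h^{-1})$. The integral becomes a sum over $v(h)$ in a window of length governed by $v_p(\Nm\alpha)-2n$, weighted by $\rho(\fq_1)^{v(h)}\rho(\fq_2)^{-v(h)}$. This yields a geometric sum of $\eta(\frob_{\fq_1}\frob_{\fq_2}^{-1})^k$.

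Finally I will insert the Frobenius data. This means computing $\frob_{\fq_i}\in\Gal(M_{32}/F)$ and $(\chi_{\wf_2}\chi_F)(p)$ in each of the six cases determined by $a\bmod 4$ and $b\bmod 2$, exactly as in the proof of Lemma \ref{matching central characters}, and then evaluating $\eta$ with $\eta(1,0,0)=-1$, $\eta(0,1,0)=i$. In the ``generic'' cases ($4\mid a$, $2\mid b$, and $2\nmid a$, $2\nmid b$) the ratio character is trivial. The double sum over $(n,v(h))$ then counts lattice points, giving $(v_{\fp_1}+1)(v_{\fp_2}+1)$, respectively $(v_\fp+1)$. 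In the other cases a sign $-1$ appears, and the alternating sums collapse to products of $\frac{(-1)^{v}+1}{2}$. The overall root of unity $\zeta_8^{\pm b(\cdots)}$ or $i^{(a+b)v}$ comes from $\rho(\lambda)$ together with $(\chi_{\wf_2}\chi_F)(p)^{-n}$.

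I expect the main obstacle to be this last bookkeeping step. The hard parts are correctly identifying $\frob_{\fq_1},\frob_{\fq_2}$ and the valuations of $\lambda$ at $\fq_1,\fq_2$ in terms of $v_{\fp_1}(\alpha),v_{\fp_2}(\alpha)$, where the relation $\Nm\alpha=\Nm\lambda^{-1}$ only fixes $v(\lambda_{\fq_1})+v(\lambda_{\fq_2})$. I will then have to check that the answer is independent of the choice of $\lambda$, which should follow from $\rho$ on $F_p^1$ being absorbed by the $h$-integral. I would handle the split-split case first, since it fixes all normalizations, and then treat the remaining cases by modifying the character values.
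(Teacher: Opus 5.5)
Your plan is correct and is essentially the paper's own proof. Both compute $\Fc(\varphi_{0,p})((0,a^{-1}),a\alpha')=\cha(\Z_p)(a^{-1})\,\cha(\Oc_{\wf_2,p})(a\alpha')$, reduce to finite sums over $n=-v_p(a)$ and $k=v(h)$, and evaluate case by case according to how $p$ splits in $F$ and $\wf_2$, using the Frobenius data.

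One caution for the bookkeeping. When $p$ splits in $F$ ($b$ even), $\frob_{\fq_2}$ is the coordinate swap of $\frob_{\fq_1}=(a+\tfrac b2,\tfrac b2,0)$, not equal to it. So $\eta(\frob_{\fq_1}\frob_{\fq_2}^{-1})=(-1)^a i^{-a}$, which is $-1$ for $a\equiv 2 \bmod 4$ but $\pm i$, not $-1$, for $a$ odd. In the $a$ odd case, with $\lambda=(p^{-j},p^{-j})$ and $j=v_\fp(\alpha)$, the double sum gives $\tfrac{1+(-1)^j}{2}\,i^{j}$. The factor $i^{j}$ then cancels $\rho(\lambda)=i^{(a+b)j}$ rather than contributing an overall root of unity.
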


\begin{remark}
Note that if such $\lambda$ satisfying $\Nm\ \alpha=\Nm\ \lambda^{-1}$ doesn't exist, the value of the previous Whittaker function at $t(\alpha)$ is zero. 
\end{remark}

\begin{proof}
Depending on the splitting behavior of $p$ in $F$ and $\wf_2$, there are four cases to consider.

(i) If $p$ is both split in $\wf_2$ and $F$, then $\bfrob_\mfp = (1,0,0)^{a} (1,0,1)^{b}$ with $a,b$ even. 
Suppose $\alpha=(p^i u,p^j v)$, $i,j\in \Z,u,v \in \Z_p^\times$, $v_p(\alpha) :=\min (i,j) = i$ and $\lambda=\alpha^{-1}\in\mathcal{O}_{F,p}$. 
Let $h=(p^k w,p^{-k}w^{-1})$, $k\in \Z,w\in \Z_p^\times$. 
If $4 \mid a$, $\rho(\lambda)= \zeta_8^{b(i+j)}$, and 
\begin{align*}
\widetilde{\phi}_p(\alpha) 
&=\sum_{\ell=-i}^{0} \int_{-\ell -j \leq k \leq \ell+i} dh
= (i+1)(j+1)
\end{align*}
when $i,j\geq 0$ and 0 otherwise. 
When $2 \mid a, 4\nmid a$, we have 
\begin{align*}
\widetilde{\phi}_p(\alpha) 
&= \sum_{\ell=-i}^{0} \sum_{k=-\ell -j}^{\ell+i} (-1)^k 
 = \frac{(-1)^i+1}{2} \frac{1+(-1)^j}{2}
\end{align*}
when $i,j\geq 0$ and 0 otherwise. 
It's non-zero when $i$ and $j$ are even, and in this case, $\rho(\lambda)= \zeta_8^{b(i+j)}.$

(ii) If $p = \fp$ is inert in $\wf_2$ and $p=\mathfrak{q}_1\mathfrak{q}_2$ is split in $F$, then $\bfrob_\mfp=(1,0,0)^a (1,0,1)^b$ with $a$ odd and $b$ even.
In this case, $\frob_\mathfrak{p}=(2a+b,b,0)$ and $\frob_{\mathfrak{q}_1}\frob_{\mathfrak{q}_2}^{-1}= (a,-a,0)$.
Let $\alpha=p^{j} u $, $j\in \Z, u\in \Oc_{\wf_2,\fp}^\times$, and $\lambda=(p^{-j} v,p^{-j} w) \in \Oc_{F,p}$, $v,w\in\Z_P^\times$, such that $\Nm \alpha = \Nm \lambda^{-1}$. Then $\rho(\lambda) = i^{j(a+b)}$, and 
\begin{align*}
\widetilde{\phi}_p(\alpha) =
\sum_{\ell=-j}^{0} (-1)^\ell \sum_{k=-\ell-j}^{\ell+j} (\pm i)^k = \frac{1+(-1)^j}{2} i^j
\end{align*}
when $j\geq 0$ and 0 otherwise. 
Put these together, we obtain $\frac{1+(-1)^{j}}{2}$.

(iii) If $p$ is split in $\wf_2$ and inert in $F$, then $\bfrob_\mfp=(1,0,0)^a (1,0,1)^b$ with $a$ even and $b$ odd. 
Let $\alpha= (p^i u, p^j v)$, $i,j\in\Z,u,v \in \Z_p^\times$. Then $\Nm\ \alpha \in \Nm\ \A_F^\times$ if and only if $i$ and $j$ have the same parity.
Suppose $\lambda=p^{\frac{i+j}{-2}}w \in F_p$ such that $\Nm\ \alpha = \Nm\ \lambda ^{-1}$, and $v_p(\alpha)=i$. Then
\begin{align*}
\widetilde{\phi}_p(\alpha) 
&=\sum_{\ell=-i}^{0} (-1)^\ell = \frac{(-1)^i+1}{2}
\end{align*}
when $j\geq i\geq 0$ and 0 otherwise.
In this case, 
$$\rho(\lambda) 
= \rho (a+b,a+b,0) ^{\frac{i+j}{-2}} 
=\begin{cases}
i^{b\frac{i+j}{2}}, & 4\mid a\\
i^{b\frac{i+j}{-2}}, & 4\nmid a 
\end{cases}
.$$

(iv) If $p$ is inert in both $\wf_2$ and $F$, then $\bfrob_\mfp = (1,0,0)^a (1,0,1)^b$ with $a,b$ odd.
Let $\alpha= p^i u$, $i\in\Z,u\in \Oc_{\wf_2,p}^\times$, and take $\lambda=\alpha^{-1}\in\mathcal{O}_{F,p}$. 
Then $\rho(\lambda) 
= i^{(a+b) i}$, and
\begin{align*}
\widetilde{\phi}_p(\alpha) 
&
= i+1
\end{align*}
when $ i\geq 0$ and 0 otherwise. 
\end{proof}

\section{Twisted Siegel-Weil formulas}
In this section, we prove the twisted Siegel-Weil formulas by showing that the base change of Jacquet-Langlands correspondence by Doi-Naganuma lift is an isomorphism, which implies that the twisted theta integral over $\wk_4$ against $\chi$ is realized as the Doi-Naganuama lift of Hecke's integral over $F$ against $\rho$.

\subsection{Base change of Jacquet-Langlands correspondence}

For the character $\chi$ on $\wk_4^\times \backslash \A_{\wk_4}^\times$ defined in Lemma \ref{wk4 character}, we denote the space
\begin{align*}
\Theta(\chi) := \{ \theta_{\wk_4,\chi}(\gb,\phi) : \phi \in \Sc(\A_{\wk_4}) \}.
\end{align*}
For each prime ideal $\fp\leq \infty$ of $\wf_2$, the subspace
\begin{align*}
\Sc (\wk_{4,\fp}, \chi):=\left\{f \in \Sc(\wk_{4,\fp}): f\left(h^{-1} x\right)=\chi(h) f(x), \forall h \in \wk_{4,\fp}^1, \forall x \in \wk_{4,\fp}\right\}
\end{align*}
is invariant under the Weil representation of $\mathrm{SL}_2(\wf_{2,\mathfrak{p}})$, and extended to a representation $\pi_{\chi,\mathfrak{p}}$ of $\GL_2(\wf_{2,\fp})$ by Jacquet and Langlands \cite[Proposition 1.5]{JL70}.
Moreover, $\pi_{\chi,\fp}$ is admissible and irreducible of central character $\chi_{\wf_4 / \wf_2} \chi|_{\wf_2^\times}$ \cite[Theorem 4.6]{JL70}.
Using global theta correspondence, Harris and Kudla showed the following local-global compatibility.

\begin{proposition}
The restricted tensor product $\pi_\chi$ of the local representations $\pi_{\chi,\fp}$ is isomorphic to the automorphic cuspidal representation $\Theta (\chi)$ of $\GL_2(\A_{\wf_2})$.
\end{proposition}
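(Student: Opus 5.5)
The plan is to run the standard global theta correspondence argument for the dual pair $(\GL_2,\GO(W))$, following Harris--Kudla \cite[Section 13]{HK91}. The first step is to show that $\Theta(\chi)$ is a nonzero cuspidal space. By the constant-term computation after Proposition \ref{whittaker wk4}, $\theta_{\wk_4,\chi}$ is cuspidal as soon as $\chi$ is nontrivial on $\wk_4^1\backslash\A_{\wk_4^1}$. Lemma \ref{wk4 character} shows $\chi$ has image $\langle\tau^4\rangle$, which is nontrivial because the Artin map is surjective. Nonvanishing then follows from Proposition \ref{value T1 K4}: for $\phi_0$ the Whittaker function at $t(1)$ is a product of local factors equal to $1$, times $e^{-2\pi}$ at the archimedean places, so it is nonzero.

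The second step is to build an intertwining map $\pi_\chi=\bigotimes'\pi_{\chi,\fp}\to\Theta(\chi)$.

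1. Use the decomposition $\Sc(\A_{\wk_4})=\bigotimes'\Sc(\wk_{4,\fp})$.
2. Note that integration against $\chi$ over $[\SO(W)]$ factors through the $\chi$-coinvariants of $\A_{\wk_4^1}$. Locally, these coinvariants are identified with the $\chi^{-1}$-isotypic subspace $\Sc(\wk_{4,\fp},\chi)$ by averaging $f\mapsto\int_{\wk^1_{4,\fp}}f(h^{-1}x)\chi(h)^{-1}dh$. This average is finite because the $\wk^1_{4,\fp}$ are compact.
3. Check that the extended Weil representation $\omega(g,h)$ of \eqref{weil left} on $R$ intertwines the Jacquet--Langlands action on $\Sc(\wk_{4,\fp},\chi)$ with right translation on $\theta_{\wk_4,\chi}$. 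This uses the commutation law \eqref{commute law} together with the independence of \eqref{theta wk4} from the choice of $\beta$.

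This gives a surjective $\GL_2(\A_{\wf_2})$-map $\pi_\chi\to\Theta(\chi)$. The target consists of functions supported on $\GL_2(\A_{\wf_2})^+$. The Jacquet--Langlands construction induces from $R$ to $\GL_2$, so I must also check that the $\GL_2(\A_{\wf_2})$-translates fill out $\Theta(\chi)$. For this I use that $\GL_2(\wf_2)\GL_2(\A_{\wf_2})^+=\GL_2(\A_{\wf_2})$, since $\det$ of the rational points is $\wf_2^\times$ and the index of norms is controlled by Hasse's norm theorem.

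The third step, and the main obstacle, is injectivity together with irreducibility. Since each $\pi_{\chi,\fp}$ is irreducible admissible \cite[Theorem 4.6]{JL70}, $\pi_\chi$ is irreducible. A nonzero quotient of an irreducible representation is then isomorphic to it, so the map is an isomorphism once we know the map is nonzero, which step one gives. The delicate part is making the restricted tensor product legitimate. At almost all places I need a distinguished spherical vector: for $\fp$ unramified in $M_{32}$ and not dividing $2$, take $\phi_{0,\fp}=1_{\Oc_{\wk_{4,\fp}}}$. I must verify that $\chi_\fp$ is unramified there and that the average of $\phi_{0,\fp}$ is $\GL_2(\Oc_{\wf_2,\fp})$-fixed. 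Its image must also be nonzero, which Proposition \ref{value T1 K4} supplies, since the Whittaker value at $t(1)$ is $1$.

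Finally, cuspidality plus uniqueness of Whittaker models gives that $\Theta(\chi)$ is an automorphic cuspidal representation. The factorization of $W_{\wk_4,\chi}$ in Proposition \ref{whittaker wk4}(1) matches the local Whittaker functional on each $\pi_{\chi,\fp}$, which confirms the identification place by place. If the irreducibility of the global theta lift becomes an issue, I would fall back on strong multiplicity one applied to the Whittaker functions from Proposition \ref{value T1 K4}.
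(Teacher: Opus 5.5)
There is a genuine gap in your second step, at the primes $\fp$ of $\wf_2$ that split in $\wk_4$. There are infinitely many of these, e.g.\ the primes in cases I.(1) and II.(1)--(3) of the splitting list.

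At such $\fp$ we have $\wk_{4,\fp}\cong\wf_{2,\fp}\oplus\wf_{2,\fp}$, and $\wk^1_{4,\fp}=\{(t,t^{-1})\}\cong\wf_{2,\fp}^\times$ is \emph{not} compact. This breaks your construction in two ways.
\begin{itemize}
\item The isotypic subspace $\Sc(\wk_{4,\fp},\chi)$ is zero. A compactly supported locally constant function cannot transform by a unitary character along the unbounded orbits $(t^{-1}x_1,tx_2)$.
\item Your averaging does not land in $\Sc$. For $\phi_\fp=1_{\Oc\oplus\Oc}$ and $\chi_\fp$ trivial on $\wk^1_{4,\fp}$ (i.e.\ $i_\fp$ even), it gives $v(x_1)+v(x_2)+1$ on $x_1x_2\neq 0$, which is unbounded near the null cone.
\end{itemize}
You inherited this from the paper's description of $\pi_{\chi,\fp}$, which is only literally correct at non-split places. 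At split places $\pi_{\chi,\fp}$ is the principal series $\pi(\chi_{\mathfrak{P}_1},\chi_{\mathfrak{P}_2})$.

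So you have not shown that $\phi\mapsto\theta_{\wk_4,\chi}(\cdot,\phi)$ factors through $\pi_{\chi,\fp}$ at these places. Without that, the map $\pi_\chi\to\Theta(\chi)$ on which your irreducibility argument rests is not defined. This local identification is exactly what the paper imports: its whole proof is the citation of \cite[Section 13]{HK91} (see also \cite[Theorem 2.3.3]{popa06}).

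If you want a direct argument, use the Jacquet--Langlands Kirillov model instead of the isotypic subspace. That model is precisely $\phi_\fp\mapsto W_{\wk_4,\chi,\fp}(\cdot,\phi_\fp)$ from Proposition \ref{whittaker wk4}(1). It converges at split places, because the support of $\phi_\fp$ confines $h$ to a compact set.
\begin{itemize}
\item Since $\theta_{\wk_4,\chi}$ is cuspidal, it equals $\sum_{\xi\in\wf_2^\times}W_{\wk_4,\chi}(t(\xi)\gb,\phi)$.
\item The product formula then shows that it depends on each $\phi_\fp$ only through its Kirillov image.
\item This gives a well-defined nonzero map from the irreducible $\pi_\chi$, and your step three then goes through.
\end{itemize}
The Whittaker factorization must be the engine of the proof, not the after-the-fact check you make it.

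Separately, your claim $\GL_2(\wf_2)\GL_2(\A_{\wf_2})^+=\GL_2(\A_{\wf_2})$ is false. The determinant of the left side is $\wf_2^\times\Nm_{\wk_4/\wf_2}(\A_{\wk_4}^\times)=\ker\chi_{\wk_4/\wf_2}$. By Artin reciprocity this has index $2$ in $\A_{\wf_2}^\times$; Hasse's norm theorem plays no role here.

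Consequently the theta integrals, extended by zero, only produce the vectors of $\pi_\chi$ on which twisting by $\chi_{\wk_4/\wf_2}\circ\det$ acts by $+1$. The set $\Theta(\chi)$ is therefore not $\GL_2(\A_{\wf_2})$-stable. It has to be replaced by its $\GL_2(\A_{\wf_2})$-span, which is what the isomorphism with $\pi_\chi$ actually concerns. Translating by some $g_0$ with $\chi_{\wk_4/\wf_2}(\det g_0)=-1$ supplies the missing half.
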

\begin{proof}
This is a special case of \cite[Section 13]{HK91}, see also \cite[Theorem 2.3.3]{popa06}.
\end{proof}

\begin{remark}
In our case, $\chi$ is unramified at all places. So the theta function associated to the Schwartz function defined in Proposition \ref{value T1 K4} is the Whittaker newform for the representation $\pi_\chi$ \cite{popa06}.
\end{remark}

Similarly, for the character $\rho$ on $F^\times \backslash \A_{F}^\times$ defined in Lemma \ref{matching central characters}, we have
\begin{align*}
\pi_\rho \cong \{ \theta_{\rho}(g,\Xi) : \Xi \in \Sc(\A_{F}) \}
\end{align*}
is an automorphic cuspidal representation of $\GL_2(\A)$.
Let the space of Doi-Naganuma lift of $\pi_\rho$ be
\begin{align*}
\Theta(\pi_\rho) := \{ I(\hb,\varphi,\Xi,\rho) : \varphi\in \Sc(V)(\A),\Xi\in \Sc(\A_F) \}.
\end{align*}

\begin{theorem}[Twisted Siegel-Weil formulas]\label{twisted siegel weil}
Let $\chi$ and $\rho$ be characters as above. Then
\begin{align}
    \pi_\chi \cong \Theta(\pi_\rho).
\end{align}
Consequently, for any Schwartz function $\phi\in\Sc (W(\A_{\wf_2}))$, there exists finitely many Schwartz functions $(\varphi,\Xi) = \sum_i (\varphi_i,\Xi_i), \varphi_i\in \Sc (V(\A)),\Xi_i \in \Sc (U(\A))$, such that 
\begin{align}
    \theta_{\wk_4,\chi} (\gb,\phi) = I (\hb,\varphi,\Xi,\rho),\quad \gb=\hb \in \GL_2(\A_{\wf_2}).
\end{align}
Moreover, all almost all places, $(\varphi,\Xi)$ is the characteristic function of the maximal integral lattice.
\end{theorem}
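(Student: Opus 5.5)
We argue by strong multiplicity one for $\GL_2(\A_{\wf_2})$, following the strategy outlined in the introduction. Take the newform $\theta_{\wk_4,\chi}(\gb,\phi_0)$ with $\phi_0$ as in Proposition \ref{value T1 K4}; by the remark following the Harris–Kudla proposition it is the Whittaker newform of the cuspidal representation $\pi_\chi\cong\Theta(\chi)$. On the other side, choose $(\varphi_0,\Xi_0)$ so that at every $p$ unramified in $M_{32}$ the local components are the characteristic functions of Proposition \ref{value T1 doi}. At the finitely many ramified primes, use the invariant vectors $\fu_{3,p}$, $\fu_{2,p}$ (or any $\SL_2(\Z_p)$-invariant pair), with the archimedean components chosen to be Gaussians of weight $(1,1)$. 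Assume first that $I(\hb,\varphi_0,\Xi_0,\rho)\neq 0$.

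\textbf{Step 1: the Doi–Naganuma lift is a cuspidal Hilbert form with the right central character.} By Lemma \ref{matching central characters}, $I$ has central character $(\chi_{\wf_2}\chi_F\rho^{-1})\circ\Nm_{\wf_2/\Q}=\chi_{\wk_4/\wf_2}\chi$, which equals that of $\theta_{\wk_4,\chi}$. Cuspidality follows from the vanishing of the contribution \eqref{inner whittaker one} to the constant term, together with cuspidality of $\theta_\rho$.

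\textbf{Step 2: match the Whittaker functions at almost all places.} Since $(\varphi_0,\Xi_0)$ is $\SL_2(\widehat\Z)$-invariant and both forms have the same central character, it suffices to compare values on $t(\alpha)$ for $\alpha\in\A_{\wf_2}^\times$ at each unramified $p$. By \eqref{whittaer doi t(alpha)}, the $p$-part of $W(t(\alpha))$ is $|\Nm\alpha|_p^{1/2}\widetilde\phi_p(\alpha)$. For each $\fp\mid p$, Proposition \ref{value T1 K4} gives $W_{\wk_4,\chi,\fp}(t(\alpha))$, and the product over $\fp\mid p$ of these values must be compared with $\widetilde\phi_p(\alpha)$. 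This comparison splits into the six Frobenius classes $\bfrob_\mfp=(1,0,0)^a(1,0,1)^b$, using the splitting table in Section 2 and the computation of $i_\fp$ for split and inert $\fp$. Two examples:
\begin{itemize}
\item For $4\mid a$, $2\mid b$: $p$ splits completely in $\wk_4$ with $i_{\fp_1},i_{\fp_2}$ even, and $(v_{\fp_1}+1)(v_{\fp_2}+1)$ matches.
\item For $a,b$ both odd: $p$ is inert in $\wf_2$ and split in $\wk_4$, and $(v_\fp+1)$ matches, with the roots of unity agreeing via $i^{(a+b)}=\zeta_8^{2(a+b)}$.
\end{itemize}
Each parity factor $\tfrac{(-1)^v+1}{2}$ corresponds to an inert or odd-$i_\fp$ prime.

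\textbf{Step 3: conclude.} The Whittaker function of $I$ is a product of local factors, so by Step 2 the cuspidal representation generated by $I$ has local components isomorphic to $\pi_{\chi,\fp}$ at almost all $\fp$; the Satake parameters are read off from the unramified Whittaker values by Shintani's formula. Strong multiplicity one then identifies the irreducible cuspidal constituents, giving $\Theta(\pi_\rho)\supseteq\pi_\chi$. Equality follows from the irreducibility of the global theta lift of the cuspidal $\pi_\rho$ to $\GL_2(\A_{\wf_2})$, using multiplicity one for $\GL_2$, since every element of $\Theta(\pi_\rho)$ has the same Hecke eigenvalues outside a finite set. Because $\pi_\chi$ is irreducible and both spaces are generated by pure tensors, any $\theta_{\wk_4,\chi}(\gb,\phi)$ is a finite sum of translates and Hecke/Weil-representation images of $I(\cdot,\varphi_0,\Xi_0,\rho)$. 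By Lemma \ref{weil mixed model}, these translates correspond to finite sums of $I(\cdot,\varphi_i,\Xi_i,\rho)$; in particular, the adjustments happen only at finitely many places, which gives the final statement about maximal lattices.

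\textbf{Main obstacle.} The hardest step is Step 2: keeping the Frobenius bookkeeping in $\Gal(M_{32}/\Q)$ consistent. This includes the correct roots of unity $\zeta_8^{\pm b}$, the $\beta_0$ and $\chi(\beta)$ normalizations, and the sign conventions for $\rho(\lambda)$. The first thing to check would be the two mixed cases ($a$ even, $b$ odd, with $4\mid a$ versus $4\nmid a$), where $\rho(\lambda)$ and $\zeta_8^{\pm i_\fp}$ must line up.

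The assumption that $I\neq0$ is a genuine gap. A nonvanishing Whittaker value at one unramified place is not sufficient to establish it, because $W(t(\alpha))$ is a product over all places. One also needs the local factors at the ramified places to be nonzero for some $\alpha$; this is not automatic, since Proposition \ref{value T1 ramified} shows the invariant vectors behave badly there. If those factors vanish, I would replace the ramified components by suitable translates under the local Weil representation until the local Whittaker integral is nonzero.
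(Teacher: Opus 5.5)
Your main line is the paper's. You take $\phi_0$ from Proposition~\ref{value T1 K4} and standard $(\varphi_{0,p},\Xi_{0,p})$ at primes unramified in $M_{32}$, match local Whittaker functions by the Frobenius case analysis of Lemma~\ref{matching unramified}, and conclude by strong multiplicity one. The paper's proof consists of exactly these steps and says nothing about nonvanishing, cuspidality or irreducibility of $\Theta(\pi_\rho)$. One piece of the scaffolding you add fails as stated.

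\textbf{Cuspidality (Step~1).} You take for granted that \eqref{inner whittaker one} contributes nothing to the constant term of $I$. The paper only shows that it contributes nothing to the \emph{Whittaker} coefficient. The reason is that, by Lemma~\ref{weil mixed model}(2) at $\eta=(0,0)$, its integrand does not depend on $x$, so the nontrivial character $\psi_{\wf_2}(-x)$ kills it. With the trivial character the same observation shows the term survives in full. Up to the twist by $d(\nu(\hb))$ it equals $\int_{[G]}\theta_{U_0}(g,\phi')\,\theta_\rho(g,\Xi)\,dg$ with $\phi'=\Fc(L(\hb)\varphi)((0,0),\cdot)$, the pairing of $\theta_\rho$ with a theta series of the anisotropic binary space $U_0=(\wf_2,-a_1\Nm)$.
\begin{itemize}
\item Cuspidality of $\theta_\rho$ (nontriviality of $\rho$ on $[F^1]$) kills the $\nu=0$ part of \eqref{inner whittaker two} and the Eisenstein component of $\theta_{U_0}$.
\item It does not kill the cuspidal components of $\theta_{U_0}$, which are dihedral with respect to $\wf_2$. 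These pair to zero only because $\pi_\rho\not\cong\pi_\rho\otimes\chi_{\wf_2}$, i.e.\ $\pi_\rho$ has no theta lift to $\mathrm{O}(U_0)$ (Rallis' tower property).
\item This does hold here. By Mackey it means $\eta\chi_{\wf_2}\notin\{\eta,\eta^\tau\}$ on $\Gal(M_{32}/F)$, and at $\sigma=(1,0,0)$ one has $\eta\chi_{\wf_2}(\sigma)=1$, $\eta(\sigma)=-1$, $\eta^\tau(\sigma)=\eta(0,1,0)=i$.
\end{itemize}
You must supply this argument, or replace it by Jacquet--Shalika once the unramified components are known to agree with those of the cuspidal $\pi_\chi$. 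Both the cited strong multiplicity one theorem and the semisimplicity behind your irreducibility argument in Step~3 need $I$ to be cuspidal.

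\textbf{Nonvanishing.} The gap you flag yourself, $I\neq 0$, is equally unaddressed in the paper, which does not even fix $(\varphi_0,\Xi_0)$ at ramified primes. Your repair can be made to work, but not through \eqref{whittaer doi}, which presupposes $\SL_2(\widehat{\Z})$-invariance, and translates are not the right tool. For arbitrary local data the unfolding gives, at $\hb=1$, $\alpha=1$ and $a_1=a_2=1$ (so $\lambda=\beta_0=1$), the local factor
\[
\int_{F_p^1}\int_{N(\Q_p)\backslash\SL_2(\Q_p)}\Fc(\omega(g)\varphi_p)((0,1),1)\,(\omega(g)\Xi_p)(h^{-1})\,dg\,\rho_p(h)\,dh .
\]
Take $\Fc(\varphi_p)=f_1\otimes f_2$, with $f_1$, $f_2$ and $\Xi_p$ characteristic functions of small neighbourhoods of $(0,1)$, $1$ and $1$ respectively. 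Choose $f_1$ last, with support fine enough that the corresponding neighbourhood of the identity fixes $f_2$ and $\Xi_p$. Since $g\mapsto(0,1)g$ identifies $N\backslash\SL_2$ with $\Q_p^2\setminus\{0\}$, the $g$-integral localises near the identity. The $h$-integral localises to a neighbourhood of $1$ on which $\rho_p$ is constant, so the factor is a positive volume. With standard data elsewhere (each unramified factor is $1$ at $\alpha=1$) and the archimedean factor of Proposition~\ref{match weight}, the Whittaker coefficient is nonzero, hence $I\neq0$.
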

\begin{proof}
Let $\phi_0 \in \Sc(\A_{\wk_4})$ be as in Proposition  \ref{value T1 K4} and $(\varphi_0,\Xi_0)$ as in Proposition  \ref{value T1 doi}. 
Then at any place over a rational prime $p$ unramified in $M_{32}$, the local Whittaker functions of $\theta_{\wk_4,\chi} (\gb,\phi_0)$ and $I(\hb,\varphi_0,\Xi_0,\rho)$ are the same by Lemma \ref{matching unramified}. 
Applying strong multiplicity one for automorphic representations of $\GL_2(\A_{\wf_2})$ \cite[Theorem 4.10]{RT11}, $\theta_{\wk_4,\chi} (\gb,\phi_0)$ and $I(\hb,\varphi_0,\Xi_0,\rho)$ generate the same automorphic representation. Therefore, $\pi_\chi \cong \Theta (\chi) = \Theta(\pi_\rho)$.
\end{proof}

\begin{remark}
As seen in the previous proof, if we replace the quadratic form by a scalar multiple, Theorem \ref{twisted siegel weil} still holds.
\end{remark}

\subsection{Matching unramified parts}

\begin{lemma}\label{matching unramified}
For any rational prime $p$ unramified in $M_{32}$, we have
\begin{align*}
\prod_{\mathfrak{p}\mid p} W_\fp(\gb,\phi_0) = W_p(\hb,\varphi_0,\Xi_0,\rho),\quad \gb=\hb \in \GL_2(\wf_{2,p}).
\end{align*}
\end{lemma}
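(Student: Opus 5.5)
The plan is to reduce the identity of local Whittaker functions on all of $\GL_2(\wf_{2,p})$ to a comparison of their values on the torus $T^1$, and then do that comparison case by case, using Proposition \ref{value T1 K4} and Proposition \ref{value T1 doi}.

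First I would reduce to $\gb=t(\alpha)$. Both sides are Whittaker functions for the additive character $\psi_{\wf_2,p}$. Both transform on the left under $N(\wf_{2,p})$ by this character. By Lemma \ref{matching central characters} they have the same central character $\chi_{\wk_4/\wf_2}\chi=(\chi_{\wf_2}\chi_F\rho^{-1})\circ\Nm$. Both are right invariant under $\GL_2(\Oc_{\wf_2,p})$: on the theta side because $\phi_0$ is the unramified vector and $\chi$ is unramified at $p$; on the Doi--Naganuma side because $(\varphi_{0,p},\Xi_{0,p})$ are characteristic functions of self-dual lattices at an unramified $p$, hence $\SL_2(\Z_p)$- and $T^1(\Z_p)$-invariant. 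The Iwasawa decomposition $\GL_2=NZT^1K$ then shows that both functions are determined by their values at $t(\alpha)$ with $\alpha\in\wf_{2,p}^\times=\prod_{\fp\mid p}\wf_{2,\fp}^\times$. Here the left side factors as $\prod_{\fp\mid p}W_{\wk_4,\chi,\fp}(t(\alpha_\fp))$.

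Next I would compare the torus values. By \eqref{whittaer doi t(alpha)}, the right side at $t(\alpha)$ equals $|\Nm\alpha|_p^{1/2}\,\widetilde\phi_p(\alpha)$, where the factor $\rho(\lambda)$ is absorbed into $\widetilde\phi_p$ as in the proof of Proposition \ref{value T1 doi}, and both vanish unless $\Nm\alpha$ is a norm from $F_p$. The left side equals $\prod_\fp|\alpha|_\fp^{1/2}$ times the bracketed expressions of Proposition \ref{value T1 K4}. Since $|\Nm\alpha|_p=\prod_{\fp\mid p}|\alpha|_\fp$, the normalising factors agree, so it remains to match $\widetilde\phi_p(\alpha)$ with the product of the bracketed local factors.

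The matching splits into the six cases of Proposition \ref{value T1 doi}, indexed by $\bfrob_\mfp=(1,0,0)^a(1,0,1)^b$. In each case I would use the splitting table I--II of Section 2, lifted to $M_{32}$ as in the proof of Lemma \ref{matching central characters}. This determines whether each $\fp\mid p$ is split or inert in $\wk_4$, and gives its index $i_\fp$ in terms of $a,b$.
\begin{itemize}
\item[(i)] If $4\mid a$ and $b$ is even, both primes $\fp_1,\fp_2$ split in $\wk_4$ with $i_{\fp_j}$ even. The left side is $\zeta_8^{i(v_1+v_2)}(v_1+1)(v_2+1)$, and I must check $i_{\fp_j}\equiv b$ in the exponent.
\item[(ii)] If $2\|a$ and $b$ is even, both primes are inert with $i_\fp$ even, which gives the product of the parity factors.
\item[(iii)] If $a$ is odd and $b$ is even, $p$ is inert in $\wf_2$ and $\fp$ is inert in $\wk_4$. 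The factor $\zeta_8^{\pm i_\fp v}$ collapses to $1$ on even $v$, because $i_\fp$ is then a multiple of $4$ or the signs cancel, matching $\frac{(-1)^v+1}{2}$.
\item[(iv), (v)] If $a$ is even and $b$ is odd, one prime splits with $i_\fp$ odd and the other is inert. This gives the product of two parity factors with the sign of the $\zeta_8$ exponent depending on $4\mid a$, consistent with the $\rho(\lambda)$ computation in case (iii) of Proposition \ref{value T1 doi}.
\item[(vi)] If $a$ and $b$ are both odd, $p$ is inert in $\wf_2$ and split in $\wk_4$ with $i_\fp$ even. This gives $\zeta_8^{i_\fp v}(v+1)$, and I must match $\zeta_8^{i_\fp}=i^{a+b}$.
\end{itemize}

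The main obstacle is bookkeeping rather than anything conceptual: identifying $i_\fp$ exactly, modulo $8$, from $(a,b)$ for each prime above $p$. This requires tracking conjugation by $\sigma$ on Frobenius elements in the non-abelian group $(\Z/4\Z)^2\rtimes\Z/2\Z$, as in the formula $\bfrob_{\mfp_2}=(1,0,0)^{a+2}(1,0,1)^{5b-6}$ from Lemma \ref{matching central characters}. Signs and powers of $\zeta_8$ must then come out to agree. Where a power of $\zeta_8$ multiplies a factor that vanishes unless the valuations are even, I would only need agreement modulo the relevant power. I would use the central character identity of Lemma \ref{matching central characters} as a consistency check on each case.
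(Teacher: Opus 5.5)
Your proposal is correct and follows essentially the paper's proof. Both reduce to $t(\alpha)$ using $N$-equivariance and the common central character from Lemma \ref{matching central characters}, then match the torus values case by case via Propositions \ref{value T1 K4} and \ref{value T1 doi}; the paper writes out only the $b$ odd cases, and your explicit right $\GL_2(\Oc_{\wf_2,p})$-invariance fills in a step it leaves implicit in the Iwasawa reduction.
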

\begin{proof}
It's easy to verify that both sides have the same behavior under the action of $n(b)\in N(\A)$, namely, multiplication by $\e (\tr (b))$. 
Since they also have the same central character by Lemma \ref{matching central characters},
it suffices to show that $W(\gb) = W(\hb)$ when $\gb=\hb=t(\alpha)$ for any $\alpha\in \wf_{2,p}^\times$.

Let $\mfp \subset M_{32}$ be a prime ideal lying over $p$, and denote by $\bfrob_\mfp = (1,0,0)^a (1,0,1)^b\in\Gal(M_{32}/\Q)$ its image under the Artin map. 
We prove that the Whittaker functions of $\theta_{\wk_4,\chi}(\gb,\phi_0)$ and $I(\gb,\varphi_0,\Xi_0)$ are the same when $b$ is odd, and the even case can be proved similarly.

Case $4\mid a$: $p=\fp_1\fp_2$ is split in $\wf_2$. In this case, $\frob_{\fp_1} = (1,0,1)^b$ and $\fp_1$ is split in $\wk_4$, $\frob_{\fp_2} = (1,0,0)^{2} (1,0,1)^{5b-6}$ and $\fp_2$ is inert in $\wk_4$. We have
\begin{align*}
RHS &= \zeta_8^{b(v_{\fp_1}(\alpha)+v_{\fp_2}(\alpha))} \frac{(-1)^{v_{\mathfrak{p}_1}(\alpha)}+1}{2} \frac{(-1)^{v_{\mathfrak{p}_2}(\alpha)}+1}{2}\\ 
 &= \zeta_8^{b v_{\fp_1}(\alpha)} \frac{(-1)^{v_{\mathfrak{p}_1}(\alpha)}+1}{2} \times 
\zeta_8^{-(5b-6) v_{\fp_2}(\alpha)} \frac{(-1)^{v_{\mathfrak{p}_2}(\alpha)}+1}{2}
=LHS
\end{align*}
by Proposition \ref{value T1 K4} and Proposition \ref{value T1 doi}.

Case $4\nmid a,\ 2\mid a$: $p=\fp_1\fp_2$ is split in $\wf_2$; $\frob_{\fp_1} = (1,0,0)^2 (1,0,1)^b$ and $\fp_1$ is inert in $\wk_4$;  $\frob_{\fp_2} = (1,0,1)^{5b-6}$ and $\fp_2$ is split in $\wk_4$. Then
\begin{align*}
RHS &= \zeta_8^{-b(v_{\fp_1}(\alpha)+v_{\fp_2}(\alpha))} \frac{(-1)^{v_{\mathfrak{p}_1}(\alpha)}+1}{2} \frac{(-1)^{v_{\mathfrak{p}_2}(\alpha)}+1}{2} \\ 
 &= \zeta_8^{(5b-6) v_{\fp_1}(\alpha)} \frac{(-1)^{v_{\mathfrak{p}_1}(\alpha)}+1}{2} \times 
\zeta_8^{-b v_{\fp_2}(\alpha)} \frac{(-1)^{v_{\mathfrak{p}_2}(\alpha)}+1}{2} 
= LHS.
\end{align*}

Case $2\nmid a$: $p=\fp$ is inert in $\wf_2$, $\frob_{\fp} = (a+b,a+b,0)$ and $\fp$ is split in $\wk_4$. We obtain
\begin{align*}
RHS &= i^{(a+b) v_\fp(\alpha)}  ( v_{\fp}(\alpha)) + 1 ) = LHS. 
\end{align*}
This finishes the proof.
\end{proof}

Let $I_{\mathcal{Q}} \subset \Gal (M_8/\Q)$ be the possible inertia subgroup of a prime ideal $\mathcal{Q}\subset M_{8}$ lying over a prime ideal $\fp \subset \wf_2$ and a rational prime $p$. 
The possible ramification of $p$, resp. $\fp$, in $F,F_2,\wf_2$, resp. $\wk_4$ is as follows, where Y or N represents $p$, resp. $\fp$, being ramified or unramified in the corresponding field. 

\begin{center}
\begin{tabular}{|c|c|c|c|c|}\hline
$I_{\mathcal{Q}}$             & $F$ & $F_2$ & $\wf_2$ & $\wk_4$\\ \hline
$\langle (1,0,0) \rangle$     & N & Y     & Y     &  Y   \\ \hline
$\langle (2,0,0) \rangle$     & N & N     & N     &  Y   \\ \hline
$\langle (0,0,1) \rangle$     & Y & N     & Y     &  N   \\ \hline
$\langle (1,0,1) \rangle$     & Y & Y     & N     &  N   \\ \hline
$\langle (2,0,1) \rangle$     & Y & N     & Y     &  N   \\ \hline
$\langle (3,0,1) \rangle$     & Y & Y     & N     &  Y   \\ \hline
\end{tabular}
\end{center}

At ramified places, the fundamental invariant vectors are used to match Hilbert Eisenstein series by Doi-Naganuma lift in \cite{LZ25}. 
However, this natural candidate doesn't match the twisted theta integrals in our setting comparing Proposition \ref{value T1 K4} with the following proposition.

\begin{proposition}\label{value T1 ramified}
For an odd prime $p=\fp^2$ ramified in $\wf_2$, set $a_{1,p}=a_{2,p}=1$ if $p\mid d_F$, and $a_1=p^{-1}$ and $a_2=p^n$ such that $\frac{1}{p^{n-1}}$ is the norm of $\beta_0\in F_p$ otherwise.
Let the Schwartz function 
\begin{align*}
(\varphi_p,\Xi_p) = 
\begin{cases}
\cha ( \Z_p^2 ) \otimes \mathfrak{u}_{2,p}, & p\mid d_{\wf_2},p\mid d_F,\\
\mathfrak{u}_{3,p} \otimes \cha (\Z_p \times p^{-n}\Z_p), & p\mid d_{\wf_2},p\nmid d_F.
\end{cases}
\end{align*}
Then, the integral
\begin{align*}
\widetilde{\phi}_p(\alpha)
:=&\int_{F_p^1} \int_{\Q_p^\times} \chi_{\wf_2}(a) \chi_F(a) 
\Fc (\varphi_p)\lb (0,a^{-1}),a\alpha^{\prime}/a_1 \rb
\times \Xi_p(a \beta_0 h^{-1} \lambda^{-1}) d^\times a \rho(h) dh\\
=&\begin{cases}
\frac{1-(-1)^{i}}{4} \cha_{\R_{\leq 0}}(-i-1) + \sum_{\ell= \lceil\frac{-i}{2} \rceil}^{0} 1  .
& p\mid d_{\wf_2},p\mid d_F,\\
\cha_{\R_{\geq 0}}(v_\mathfrak{p}(\alpha)+1) \times (v_\mathfrak{p}(\alpha)+2) & p\mid d_{\wf_2},p\nmid d_F.
\end{cases}
\end{align*}
Here we normalize the Haar measures such that $\vol (\Z_p^\times) = \vol(F_p^1) = 1$. 
\end{proposition}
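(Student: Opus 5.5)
We compute $\widetilde{\phi}_p(\alpha)$ directly, in the same way as Proposition \ref{value T1 doi}. We substitute the explicit partial Fourier transforms of the invariant vectors, and then evaluate the two remaining integrals: over $a\in\Q_p^\times$ and over $h\in F_p^1$. In each case we first parametrize $a=p^\ell u$ with $u\in\Z_p^\times$, so that the $a$-integral becomes a sum over $\ell$ of integrals over units. Write $\alpha=\varpi_p^{i}w$ with $i=v_\fp(\alpha)$, and choose $\lambda$ with $\Nm\lambda=\Nm\alpha^{-1}$. Because $p$ ramifies in $\wf_2$, we have $\Nm\alpha\in p^{i}\Z_p^\times$ and $\alpha'=-\alpha$ up to units. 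So $\Fc(\varphi_p)((0,a^{-1}),a\alpha'/a_1)$ is governed by the support condition on $a\alpha'/a_1$ in $\Oc_{\wf_2,p}$ (or in $\varpi_p\Oc_p$), together with the condition on $a^{-1}$.

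\emph{Case $p\nmid d_F$.} Here $a_1=p^{-1}$, and by \eqref{invariant vector 3} the Fourier transform is $\chi_p(a^{-1})\,\cha(\Z_p^\times\oplus\varpi_p\Oc_p)(a^{-1},pa\alpha')$. This forces $a\in\Z_p^\times$ and $v_\fp(p\alpha')\ge1$, that is, $i\ge -1$. The unit character $\chi_p(a^{-1})$ then combines with $\chi_{\wf_2}(a)\chi_F(a)$. Since $p\mid d_{\wf_2}$, we have $\chi_{\wf_2,p}|_{\Z_p^\times}=\chi_p$, while $\chi_F$ is unramified; hence the $u$-integral gives $1$. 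We are left with the $h$-integral of $\cha(\Z_p\times p^{-n}\Z_p)(\beta_0 h^{-1}\lambda^{-1})\rho(h)$ over $F_p^1$.
\begin{itemize}
\item If $p$ is inert in $F$, then $F_p^1=\Oc_{F,p}^\times{}^1$ is compact, $\rho$ is unramified so $\rho(h)=1$ there, and only a valuation condition on $\beta_0\lambda^{-1}$ remains.
\item If $p$ splits, write $h=(p^k w,p^{-k}w^{-1})$. Then $\rho(h)=\eta(\frob_{\fq_1}\frob_{\fq_2}^{-1})^k$. Since $p$ ramifies in $\wf_2$ but not in $F$, the inertia table forces $\frob_{\fq_1}=\frob_{\fq_2}$, so $\rho(h)=1$.
\end{itemize}
In the split case the count of admissible $k$ is a length $v_\fp(\alpha)+2$, coming from the range $-n\le\cdots$ shifted by $\beta_0$ with $\Nm\beta_0=p^{1-n}$. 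This gives $\cha_{\R_{\ge0}}(i+1)(i+2)$.

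\emph{Case $p\mid d_F$.} Now $a_1=a_2=1$ and $\varphi_p=\cha(\Z_p^2)\otimes\mathfrak u_{2,p}$. The first factor contributes the condition $a^{-1}\in\Z_p$ after the partial Fourier transform of $\cha(\Z_p)$ in $x$. The $\mathfrak u_{2,p}$ factor, see \eqref{invariant vector 2}, couples the $\wf_2$ and $F$ coordinates through the common residues $\varpi_p^{-1}j$. We then sum over $j\in\Z/p\Z$ and over $\ell=v_p(a)$:
\begin{itemize}
\item The term $j=0$ gives $\Oc\otimes\Oc$, with condition $a\alpha'\in\Oc_{\wf_2,p}$. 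This is $\ell\ge\lceil -i/2\rceil$, and together with $\ell\le0$ it produces $\sum_{\ell=\lceil -i/2\rceil}^0 1$.
\item The terms $j\ne0$ require $a\alpha'$ to have valuation exactly $-1$, so $i+2\ell=-1$, which is possible only for $i$ odd. Each such $j$ carries the character $\chi_{\wf_2}(a)\chi_F(a)$, which now has two ramified quadratic factors. Summing over $j$ and $u$, together with $\rho$ (which is tamely ramified here), should yield the factor $\frac{1-(-1)^i}{4}$, active when $-i-1\le0$.
\end{itemize}
Here $F^1_p$ is compact, since $F_p/\Q_p$ is ramified, and has volume $1$.

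The main obstacle is this last $p\mid d_F$ computation. We have to determine $\rho$ on $F_p^1$ and on the ramified uniformizer, using the table of inertia groups (inertia $\langle(0,0,1)\rangle$ or $\langle(2,0,1)\rangle$ in $M_8$, lifted to $M_{32}$). We also have to carry out the character sum $\sum_{u}\chi_{\wf_2}\chi_F(u)\cha(\varpi^{-1}j+\cdots)$ carefully enough to get exactly the constant $1/4$. We would begin by checking that $\chi_{\wf_2}\chi_F$ is unramified at $p$: both are ramified quadratic characters of $\Z_p^\times$, so their product is trivial on units. This would reduce the remaining sum to a count, from which the factor $\frac{1-(-1)^i}{4}$ should follow.
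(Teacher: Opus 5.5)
Your route is the same as the paper's: substitute \eqref{invariant vector 3} and \eqref{invariant vector 2}, cut down the $a$-integral using the support of $\Fc(\varphi_p)$, and count shells. The paper's proof is essentially this substitution followed by a count. For $p\nmid d_F$ your bookkeeping reproduces its count $v_\fp(\alpha)+2$: $a\in\Z_p^\times$, $v_\fp(\alpha)\ge -1$, $\chi_{\wf_2}\chi_F\chi_p=1$ on $\Z_p^\times$, and, with the paper's $\lambda=(1,\Nm\alpha^{-1})$ and $\beta_0=(1,p^{1-n})$, the shells $-1-v_\fp(\alpha)\le k\le 0$.

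\textbf{Gap 1: the treatment of $\rho$ when $p\nmid d_F$.} Here $p$ necessarily splits in $F$, so your inert subcase is empty. The primes $\fq_1,\fq_2$ are, however, ramified in $M_{32}$, as the paper itself notes. The inertia group is a cyclic subgroup of $\Gal(M_{32}/F)=\{(a,b,0)\}$ mapping onto $\langle(1,0,0)\rangle\subset\Gal(M_8/\Q)$, so it is generated by some $(1+t,t,0)$. Consequently:
\begin{itemize}
\item $\frob_{\fq_i}$ does not exist, and nothing in the inertia table gives $\frob_{\fq_1}=\frob_{\fq_2}$.
\item Your formula $\rho(h)=\eta(\frob_{\fq_1}\frob_{\fq_2}^{-1})^k$ ignores the unit part $w$ of $h=(p^kw,p^{-k}w^{-1})$, which is exactly where a ramified $\rho$ can be nontrivial.
\end{itemize}
If you compute it, use $\rho_{\fq_2}=\eta\circ s\circ\Art_{\fq_1}$, where $s(a,b,0)=(b,a,0)$ is conjugation by $\tau$. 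This gives $\rho(w,w^{-1})=i^{a-b}$ for $\Art_{\fq_1}(w)=(a,b,0)$, and $a-b$ is odd on a generator of inertia. So $\rho$ is a character of order $4$ on $\{(w,w^{-1}):w\in\Z_p^\times\}$. Since $\Xi_p=\cha(\Z_p\times p^{-n}\Z_p)$ is $\Oc_{F,p}^\times$-invariant, every shell integral in $h$ would then vanish. The paper's one-line proof also silently treats $\rho$ as $1$ on $F_p^1$. You cannot justify that the way you propose, and the computation above indicates it is false, so $v_\fp(\alpha)+2$ does not follow from your argument.

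\textbf{Gap 2: the case $p\mid d_F$.} You stop at ``should yield'', and the mechanism you suggest is not what produces the extra term.
\begin{itemize}
\item The constant is $\frac{1-(-1)^i}{4}$, which equals $\frac12$ for odd $i$, not $\frac14$.
\item It does not come from a character sum over $j$ and $u$. When $2\ell+i=-1$, only the single $j$ given by the residue of $a\alpha'$ can occur.
\item The coupling in $\mathfrak u_{2,p}$ forces $a\alpha'\equiv a\beta_0h^{-1}\lambda^{-1}\pmod{\Oc}$. Take $\beta_0=1$ and $\lambda^{-1}=\alpha$, identifying $\wf_{2,p}=F_p$ with a common uniformizer as the paper does. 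Since $\alpha'/\alpha\equiv(-1)^i\pmod{\varpi_p}$, for odd $i$ this forces $h\equiv-1\pmod{\varpi_p}$. That coset has measure $\frac12$ in $F_p^1$, and this is the source of the term.
\item In this case $\chi_{\wf_2,p}=\chi_{F,p}$, so $\chi_{\wf_2}\chi_F\equiv1$ on all of $\Q_p^\times$, not just on units.
\end{itemize}
Both this $\frac12$-term and the $j=0$ count $\sum_{\ell=\lceil -i/2\rceil}^{0}1$ also need $\rho|_{F_p^1}\equiv 1$. Since $\rho$ is tame, this amounts to $\rho(-1)=1$. It holds when $\rho$ is unramified at the prime above $p$, but in general it must be checked from the inertia and decomposition data rather than assumed.
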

\begin{proof}
(1) When $p$ is ramified in both $\wf_2$ and $F$, denote $\varpi_p$ as the uniformizer of $\wf_{2,p}=F_p$.
Let $\alpha=\varpi_p^i u$ and $\lambda=\varpi_p^{-i} u^{-1}$.  
Substituting Equation \eqref{invariant vector 2}, we obtain
\begin{align*} 
\widetilde{\phi}_p(\alpha) 
= \frac{1-(-1)^{i}}{4} \cha_{\R_{\leq 0}}(-i-1) + \sum_{\ell= \lceil\frac{-i}{2} \rceil}^{0} 1  .
\end{align*}

(2) If $p$ is ramified in $\wf_2$ and unramified in $F$, $p =\fq_1\fq_2$ is split in $F$ and $\fq_i$ is ramified in $M_{32}$, $i=1,2$. 
Let $\lambda = (1,\Nm \alpha^{-1})$ and $\beta_0 = (1,p^{1-n})$.
Applying Equation \eqref{invariant vector 3}, we have
\begin{align*}
\widetilde{\phi}_p(\alpha) = v_\fp(\alpha) +2
\end{align*}
when $v_\fp(\alpha) \geq  -1$ and 0 otherwise. 
\end{proof}

\subsection{Matching archimedean parts}
Note that $\rho$ is trivial on infinite parts since $\sigma^2$ is complex conjugation and $\rho(\sigma^2)=1$ by Lemma \ref{matching central characters}.
Let Schwartz functions
\begin{align}\label{weight pm 1}
\begin{split}
\varphi_{\infty}^{(1,1)}\left(a, b, \nu, \nu^{\prime}\right) &:= -i \left(a-b+i\left(\nu+\nu^{\prime}\right)\right) 
e^{-\pi\left(a^2+b^2+\nu^2+\left(\nu^{\prime}\right)^2\right)},\\
\varphi_{\infty}^{( \pm 1, \mp 1)}\left(a, b, \nu_1, \nu_2\right) &:=\left( \pm i(a+b)+\left(\nu_1-\nu_2\right)\right) 
e^{-\pi\left(a^2+b^2+\nu_1^2+\nu_2^2\right)} \in \mathcal{S}\left( \mathbb{R}^{2,2},\det\right) .
\end{split} 
\end{align} 
Then, by \cite[(4.19)]{BLY22},
\begin{align*}
\Fc\left(\varphi_{\infty}^{\left(k, k^{\prime}\right)}\right)((0, r), \nu)
=\varphi_{0, \infty}^{\left(k, k^{\prime}\right)}(i r, 0, \nu) e^{-2 \pi r^2}, (k,k^\prime)=(1,1),(\pm 1,\mp 1).
\end{align*}
Here we record one useful integral
\begin{equation}\label{special integrals}
\begin{split}
\int_0^{\infty} r^{-1}\left( a r^{-1}+b r\right) \exp \left(-\pi a^2 r^{-2}-\pi b^2 r^2\right) d r 
&=\sgn(a)\frac{1+\sgn(ab)}{2} e^{-2 \pi a b}
.
\end{split}
\end{equation}

Now we match the archimedean part of the Whittaker functions of $\theta_{\wk_4,\chi}(\gb,\phi_f^\pm\phi_\infty^\pm)$ for any $\phi_f^\pm\in \Sc(W^\pm)(\A_{\wf_2})$ and $\phi_\infty^\pm$ the Gaussian of $W^\pm$.
Using a change of variable, we rewrite 
\begin{align}\label{rewriting wk4 chi}
\begin{split}
\theta_{\wk_4,\chi}(\gb,\phi_f^+\phi_\infty^+) 
&= \sum_{\alpha\in\wf_2^\times} W_{\wk_4,\chi,f}(t(\alpha)\gb,\phi_f^+) W_{\wk_4,\chi,\infty}(t(\alpha)\gb,\phi_\infty^+)\\
&= \sum_{\alpha\in\wf_2^\times} W_{\wk_4,\chi,f}(t(\alpha\sqrt{\widetilde{D}})\gb,\phi_f^+) W_{\wk_4,\chi,\infty}(t(\alpha)\gb,\phi_\infty^{1,-1}),\\
\theta_{\wk_4,\chi}(\gb,\phi_f^-\phi_\infty^-) 
&= \sum_{\alpha\in\wf_2^\times} W_{\wk_4,\chi,f}(t(\alpha)\gb,\phi_f^-) W_{\wk_4,\chi,\infty}(t(\alpha)\gb,\phi_\infty^-)\\
&= \sum_{\alpha\in\wf_2^\times} W_{\wk_4,\chi,f}(t(-\varepsilon\alpha\sqrt{\widetilde{D}})\gb,\phi_f^-) W_{\wk_4,\chi,\infty}(t(\alpha)\gb,\phi_\infty^{-1,1}),
\end{split}
\end{align}
where 
\begin{align*}
W_{\wk_4,\chi,\infty}(t(\alpha),\phi_\infty^{1,-1}) &= \frac{1+\sgn(\alpha)}{2} \frac{1-\sgn(\alpha')}{2} \exp (-2\pi (\alpha - \alpha')),\\
W_{\wk_4,\chi,\infty}(t(\alpha),\phi_\infty^{-1,1}) &= \frac{1-\sgn(\alpha)}{2} \frac{1+\sgn(\alpha')}{2} \exp (-2\pi (-\alpha + \alpha')).
\end{align*}

\begin{proposition}\label{match weight}
Let $\Xi^\pm_\infty (x_1,x_2) = \left(x_1 \pm x_2\right) e^{-\pi\left(x_1^2+x_2^2\right)} \in \mathcal{S}\left(\mathbb{R}^2\right)$ and $a_{1,\infty} = a_{2,\infty} =1$.  
Then, when $\gb=\hb \in \GL_2(\wf_{2,\infty})$,
\begin{align*}
W_{\wk_4,\chi,\infty}(\gb,\phi_\infty^{1,1}) &= \frac{1}{2} W_\infty(\hb,\varphi_\infty^{( 1, 1)},\Xi_\infty^+,\rho),\\
W_{\wk_4,\chi,\infty}(\gb,\phi_\infty^{\pm 1,\mp 1}) &= \frac{1}{2} W_\infty(\hb,\varphi_\infty^{(\pm 1,\mp 1)},\Xi_\infty^-,\rho).\\
\end{align*} 
\end{proposition}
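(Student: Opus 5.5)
Both sides are smooth functions on $\GL_2(\wf_{2,\infty}) = \GL_2(\R)^2$. They transform the same way under $N(\wf_{2,\infty})$, namely by $\e(\tr b)$. They have the same central character at infinity, since $\rho_\infty$ is trivial and both weights are $K_\infty$-types of parallel parity. They also have the same weights under $\SO_2(\R)^2$: for $\phi_\infty^{1,1}$ and $\varphi_\infty^{(1,1)}$ the weights are $(1,1)$, and for the others they are $(\pm1,\mp1)$. So by the Iwasawa decomposition it suffices to compare the values at $t(\alpha)$ with $\alpha\in\wf_{2,\infty}^\times = (\R^\times)^2$. The left-hand values are already known: Proposition \ref{value T1 K4} in the archimedean case gives $\frac{1+\sgn\alpha}{2}\frac{1+\sgn\alpha'}{2}|\alpha\alpha'|^{1/2}e^{-2\pi(\alpha+\alpha')}$, and the displayed formulas after \eqref{rewriting wk4 chi} give the $(\pm1,\mp1)$ analogues.

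For the right-hand side I will use \eqref{whittaer doi t(alpha)} at $p=\infty$, with $a_1=a_2=1$ and $\beta_0=1$ (which has norm $1=1/(a_1a_2)$), and choose $\lambda\in F_\infty^\times=(\R^\times)^2$ with $\Nm\lambda=(\Nm\alpha)^{-1}$. The factor $\chi_{\wf_2}(a)\chi_F(a)$ is trivial on $\R^+$. The partial Fourier transform is given by \cite[(4.19)]{BLY22}: $\Fc(\varphi_\infty^{(k,k')})((0,r),\nu)=\varphi^{(k,k')}_{0,\infty}(ir,0,\nu)e^{-2\pi r^2}$. Inserting $r=a^{-1}$ and $\nu=a\alpha'$ makes this a linear expression in $a^{-1}$ and $a\alpha^{(\prime)}$ times a Gaussian. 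The torus $F^1_\infty=\{(h,h^{-1})\}\cong\R^\times$ is noncompact, so the $h$-integral has to be paired with the $a$-integral. After substituting $(a,h)\mapsto(a_1,a_2)$ with $a h^{-1}\lambda_1^{-1}=x_1$ and $a h\lambda_2^{-1}=x_2$, the double integral over $\R^+\times\R^\times$ should factor into two one-variable integrals, one for each real place of $\wf_2$. Each factor is of the form \eqref{special integrals}, with $(a,b)$ given by the two coordinates coming from $\Fc(\varphi_\infty)$ and from $\Xi_\infty^\pm$. Each such integral yields $\sgn(\cdot)\frac{1+\sgn(\cdot)}{2}e^{-2\pi(\cdot)}$. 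Multiplying the two factors should reproduce the indicator of the correct signature ($\alpha,\alpha'>0$ for weight $(1,1)$, and mixed signs for $(\pm1,\mp1)$), together with the exponential $e^{-2\pi(\pm\alpha\pm\alpha')}$.

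The factor $\frac12$ is expected to come from the disconnectedness of $F^1_\infty\cong\R^\times$. The sign component $h\mapsto -h$ contributes a second, equal copy because $\Xi_\infty^\pm(-x)=-\Xi_\infty^\pm(x)$ is compensated by the corresponding sign of $\varphi$. Alternatively, it may come from the normalization $\vol$ on $\R^+$ versus $\R^\times$. I will fix it by comparing the two sides at $\alpha=(1,1)$, respectively $(1,-1)$, while keeping the Haar measures fixed as in the paper. The powers $|\alpha\alpha'|^{1/2}=|\Nm\alpha|^{1/2}_\infty$ should match the prefactor in \eqref{whittaer doi t(alpha)} directly.

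The main obstacle is the bookkeeping in the change of variables that decouples the two real places. One must check that the linear polynomial $a-b+i(\nu+\nu')$ (respectively $\pm i(a+b)+(\nu_1-\nu_2)$), evaluated at $(ir,0,\nu)$, splits compatibly with $x_1\pm x_2$. The choice of $\Xi^+$ versus $\Xi^-$ is exactly what makes the cross terms cancel. I would first carry out the $(1,1)$ case completely, then obtain $(\pm1,\mp1)$ by replacing $\alpha'$ by $-\alpha'$ and checking the sign changes.
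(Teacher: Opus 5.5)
Your reduction to the values at $t(\alpha)$ and your set-up via \eqref{whittaer doi t(alpha)} and the formula for $\Fc(\varphi_\infty^{(k,k')})$ agree with the paper. The gap is in the evaluation of the double integral, which is the entire content of the proof, and your planned route does not work.

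With $\lambda=(\alpha^{-1},\alpha'^{-1})$, the integrand is $(a^{-1}+a(\alpha+\alpha'))e^{-\pi(a^{-2}+a^2\alpha^2+a^2\alpha'^2)}\cdot\Xi_\infty^\pm(ah^{-1}\alpha,ah\alpha')$, with the first factor as written in the paper. In your variables $x_1=ah^{-1}\alpha$, $x_2=ah\alpha'$, neither factor separates:
\begin{itemize}
\item $\Xi_\infty^\pm=(x_1\pm x_2)e^{-\pi(x_1^2+x_2^2)}$ is not a product of a function of $x_1$ and a function of $x_2$;
\item the first factor depends on $a^2=x_1x_2/(\alpha\alpha')$, hence on both variables.
\end{itemize}
Moreover, the answer does not arise place by place.

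The missing idea is to integrate over the torus $F^1_\infty$ first, with $a$ fixed. Applying \eqref{special integrals} with parameters $(a\alpha,a\alpha')$, both coming from $\Xi_\infty^+$, gives $2\sgn(\alpha)\frac{1+\sgn(\alpha\alpha')}{2}e^{-2\pi a^2\alpha\alpha'}$. This exponential completes $e^{-\pi a^2(\alpha^2+\alpha'^2)}$ to $e^{-\pi a^2(\alpha+\alpha')^2}$. The remaining $a$-integral is \eqref{special integrals} again, now with parameters $(1,\alpha+\alpha')$, both coming from $\varphi_\infty$, and it yields $\frac{1+\sgn(\alpha+\alpha')}{2}e^{-2\pi(\alpha+\alpha')}$. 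So the signature condition appears as $\frac{1+\sgn(\alpha\alpha')}{2}\cdot\frac{1+\sgn(\alpha+\alpha')}{2}$, not as a product of two per-place indicators.

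Your explanation of the factor $\frac12$ is also incorrect. Under $h\mapsto -h$, the argument $((0,a^{-1}),a\alpha')$ of $\Fc(\varphi_\infty)$ does not change. So nothing coming from $\varphi$ can compensate the sign in $\Xi^\pm_\infty(-x)=-\Xi^\pm_\infty(x)$. If $\rho_\infty$ were trivial, as you assume, the two components of $F^1_\infty\cong\R^\times$ would cancel and the archimedean integral would vanish.

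They add, producing the $2$ above, because $\rho_\infty(h,h^{-1})=\sgn(h)$. This is forced by Lemma \ref{matching central characters} at the real places of $\wf_2$. There $\chi_{\wk_4/\wf_2}$ is the sign character, while $\chi_{\wf_2}$ and $\chi_F$ are trivial, so $\rho_\infty(c,c)=\sgn(c)$. The same fact is needed in your central-character step, since a weight $(1,1)$ vector with trivial central character at infinity is zero.

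Finally, fixing the constant by comparing both sides at $\alpha=(1,1)$ presupposes that the two sides are proportional, which you have not shown independently. In any case you would still have to evaluate the integral at that point, which is exactly the computation above.
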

\begin{proof}
We prove the first case here, and the second case can be proved similarly.
By Proposition \ref{value T1 K4}, it suffices to show that 
\begin{align*}
W_\infty((t(\alpha),t(\alpha')),\varphi_\infty^{( 1, 1)},\Xi_\infty^+) 
=2 \exp (-2\pi (\alpha + \alpha^\prime))
\end{align*}
when $\alpha ,\alpha' \in \R_{>0}$ and 0 otherwise.
Let $\lambda = (\alpha^{-1},\alpha^{\prime,-1} ) \in \R^2$.
Then
\begin{align*}
&\int_{\R^+} \int_{\R^\times} \chi_{\wf_2}(a) \chi_F(a) 
\Fc (\varphi_\infty^{1,1})\lb (0,a^{-1}),a\alpha^{\prime} \rb
\times \Xi_\infty^+(a h^{-1} \lambda^{-1}) d^\times a dh\\
=&\int_{\R^+} \int_{\R^\times} 
\left(a^{-1}+a\left(\alpha+\alpha^{\prime}\right)\right)
e^{-\pi\left(a^{-2}+(a\alpha)^2+\left(a\alpha^{\prime}\right)^2\right)}
\times \Xi_\infty^+(a h^{-1} \lambda^{-1}) d^\times a dh\\
=& 2\sgn(\alpha) \frac{1 + \operatorname{sgn}\left(\alpha \alpha^{\prime}\right)}{2}\int_{\R^+} 
\left(a^{-1}+a\left(\alpha+\alpha^{\prime}\right)\right)
e^{-\pi\left(a^{-2}+(a\alpha)^2+\left(a\alpha^{\prime}\right)^2\right)}
\times \exp (-2\pi a^2 \alpha\alpha^\prime) d^\times a\\
=&2\frac{1 + \operatorname{sgn}\left(\alpha \right)}{2} 
\frac{1 + \operatorname{sgn}\left(\alpha^{\prime}\right)}{2} \exp (-2\pi (\alpha + \alpha^\prime))
\end{align*}
by Equation \eqref{special integrals}. This finishes the proof.

\end{proof}

\section{Twisted Integral of Borcherds forms}
In this section, we recall the deformed theta integrals constructed in \cite{CL20}, see also \cite[Section 2]{BLY22}, and calculate their Whittaker functions.
Then we use the Doi-Naganuma lift of deformed theta integrals to obtain twisted CM value formulas of Borcherds forms.

\subsection{Deformed theta integrals}

Let $\rho$ the character as in Lemma \ref{matching central characters}. 
Recall $U$ is the the quadratic space $F$ over $\Q$ with quadratic form $\Nm_{F/\Q}$.
Define $H_U := \SO(U)$ and $H_U(\mathbb{Q})^+ := H_U(\mathbb{Q}) \cap H_U(\mathbb{R})^+$. 
Let $K_{\rho} \subset H_U(\widehat{\mathbb{Z}})$ be the open compact subgroup fixing $\rho$. 
The intersection $H_U(\mathbb{Q})^+\cap K_{\rho}$ is the cyclic group 
$$\Gamma_{\rho}=\left\langle\varepsilon_\rho \right\rangle,\quad \varepsilon_\rho >1>\varepsilon_\rho^{\prime}>0$$
of totally positive units in $\mathcal{O}_{F}$. 
Fix $C \subset H_U(\widehat{\mathbb{Q}})$ to be a finite subset of elements representing $H_U(\mathbb{Q})^{+} \backslash H_U(\widehat{\mathbb{Q}}) / K_{\rho}$. 
Then we have
$$H_U(\mathbb{A})=\coprod_{\xi \in C} H_U(\mathbb{Q}) H_U(\mathbb{R})^{+} K_{\rho} \xi.$$
So every element $h= ( h_f, h_{\infty} )\in H_U(\mathbb{A})$ has a decomposition $h=\left(\alpha k \xi, \alpha t\right)$, where $\alpha \in H_U(\mathbb{Q}), t \in H_U(\mathbb{R})^{+}, k \in K_{\rho}, \xi \in C$. 
This gives us the identification
\begin{equation}\label{sta so u}
H_U(\mathbb{Q}) \backslash H_U(\mathbb{A}) / K_{\rho} \cong \coprod_{\xi \in C} \Gamma_{\rho} \backslash H_U(\mathbb{R})^{+} \xi
: \left(\alpha k \xi, \alpha t\right) \mapsto \Gamma_\rho t \xi.
\end{equation}
To consider deformation of theta integrals $\theta_{\rho} (g,\Xi)$, we define the function 
\begin{align}\label{Deformed character}
\lg &: H_U(\mathbb{Q}) \backslash H_U(\mathbb{A}) / K_{\rho} \rightarrow (0,1),\quad 
\lg \left(\left(\alpha \xi,( \alpha t,\alpha^\prime t^{-1})\right)\right) :=2 \log \varepsilon_\rho \left\{\log |t| / \log \varepsilon_\rho \right\},
\end{align}
where $\{a\} :=a-\lim _{\epsilon \rightarrow 0} \frac{1}{2}(\lfloor a+\epsilon\rfloor+\lfloor a-\epsilon\rfloor)$, see \cite[Section 2]{BLY22} for more details.
Denote by
\begin{align}
\wrho :=\lg \cdot \rho: F^1 \backslash \A_{F^1} \rightarrow \mathbb{C}^\times,
\end{align}
which is $K_\rho$-invariant.
The deformed theta integral constructed in \cite{CL20} is
\begin{align}\label{deformed theta integral}
\theta_{\wrho} (g,\Xi) := \int_{[H_U]} \theta ( g, h \lambda ,\Xi) \wrho(h\lambda) dh,\quad g\in\GL_2^+(\A), \Xi=\Xi_f\Xi_\infty\in\Sc(U(\A)),
\end{align}
where $\lambda\in\A_F^\times$ satisfies $\det g = \Nm\ \lambda$.

To calculate the inverse image of $\theta_{\rho} (g,\Xi)$ under the lowering operator $L$, we define the integral 
\begin{equation}
\begin{split}
\theta_{\rho,f} :=& \int_{F^1\backslash \A_{F^1,f}} \theta ( g, h\lambda,\Xi) \rho ( h\lambda ) dh.
\end{split}
\end{equation}
Recall the function $\erfc(x):=\frac{2}{\sqrt{\pi}} \int_x^{\infty} e^{-t^2} d t=1-\erf(x)$.

\begin{lemma}\label{lowering deformed} 
The deformed theta integral $\theta_{\wrho}(g,\Xi_f\Xi_\infty^\pm)$ is an automorphic form on $\GL_2(\Q)\backslash\GL_2(\A)$ of weight $\pm 1$. 
For $\tau\in\h$, denote by $\theta_{\wrho}(\tau,\Xi_f\Xi^\pm) = v^{\mp 1/2} \theta_{\wrho} (g_\tau,\Xi_f\Xi^\pm)$. 
Suppose $\alpha\in \Q^\times$ and $\lambda \in F^\times$ satisfy $\alpha =\Nm\ \lambda$. 
Let $\lambda_n = \sqrt{|\lambda/\lambda^\prime|} \varepsilon_\rho^{n}$, $n\in\Z$.
The $\alpha$-th Fourier coefficient of $\theta_{\wrho}(\tau,\Xi_f\Xi^\pm)$ is 
\begin{align*}
W^\pm(\tau,\alpha) &= W_f(\alpha) \times W_\infty^\pm(\tau,\alpha),
\end{align*}
where
\begin{align*}
W_f(\alpha) &= \int_{\A_{F^1,f}} \Xi_f \left(\alpha \lambda^{-1} h^{-1}\right) \rho \left( h \right) d h,\\
W_\infty^\pm(\tau,\alpha) &= 4 \e (\alpha u) v^{\frac{1\mp 1}{2}} \sqrt{v|\alpha|} \sgn(\lambda^\prime) ( W_1 + W_2 +W_3),\\
W_1 &= -\log \lambda_0 \times
\begin{cases}
0,& \pm \sgn(\alpha) = -\\
\exp (-2\pi v|\alpha|) / \sqrt{v|\alpha|} ,& \pm \sgn(\alpha) = +
\end{cases},\\
W_2 &= \begin{cases}
0, & \pm \sgn(\alpha) = +\\
-\left.\frac{\partial}{\partial s} K_s(2 \pi v|\alpha|)\right|_{s=1 / 2}, & \pm \sgn(\alpha) = -
\end{cases},\\
W_3 &= -\log \varepsilon_\rho \exp (2\pi v|\alpha|) \frac{1}{2\sqrt{ v |\alpha|}}  \sum_{n\in \Z} 
\erfc (\sqrt{\pi v |\alpha|} (\lambda_{n}^{-1}  \mp \sgn(\alpha) \lambda_{n} ) ),
\end{align*}
Moreover, they satisfy the differential equation 
\begin{align*}
L \theta_{\wrho} (g,\Xi_\infty^+) = \theta_{\rho} (g,\Xi_\infty^-) + \log\varepsilon_\rho \cdot \theta_{\rho,f} (g,\Xi_\infty^-).
\end{align*}
\end{lemma}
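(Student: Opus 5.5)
We follow the pattern of Proposition \ref{whittaker wf2} and of \cite[Section 2]{BLY22}, \cite{CL20}, with the deformed character $\wrho=\lg\cdot\rho$ in place of $\rho$.

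\emph{Automorphy and weight.} Since $\lg$ is a function on $H_U(\Q)\backslash H_U(\A)/K_\rho$, the integral \eqref{deformed theta integral} is still left $\GL_2(\Q)$-invariant. This is the same argument as for $\theta_\rho$: left invariance of $\theta(g,h\lambda,\Xi)$ together with the independence of the choice of $\lambda$ (changing $\lambda$ by $F^1$ is absorbed into $h$). The $K$-type at infinity is that of $\Xi_\infty^\pm$. A direct check shows that $\Xi_\infty^\pm$ transforms under $\SO_2(\R)$ by the character of weight $\pm1$, since $x_1\pm x_2$ is a harmonic polynomial in the split coordinates. Hence $\theta_{\wrho}$ has weight $\pm1$.

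\emph{Fourier coefficients.} We unfold exactly as in Proposition \ref{whittaker wk4}. Integrating over $\Q\backslash\A$ against $\psi(-\alpha x)$ keeps only $\beta\in F$ with $\Nm\beta=\alpha$. We fix $\lambda$ with $\Nm\lambda=\alpha$; all such $\beta$ lie in $\lambda F^1$, and we unfold $F^1$ against $[H_U]$. Because $\lg$ depends only on the archimedean component (through $\{\log|t|/\log\varepsilon_\rho\}$) and on the class modulo $K_\rho$, the integral factors as follows.

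The finite part gives $W_f(\alpha)=\int_{\A_{F^1,f}}\Xi_f(\alpha\lambda^{-1}h^{-1})\rho(h)\,dh$.

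The archimedean part is an integral over $H_U(\R)\cong\R^\times$ with $\Gamma_\rho$ acting. We use \eqref{sta so u} to write $\Gamma_\rho\backslash H_U(\R)^+=\bigsqcup_n[\varepsilon_\rho^n,\varepsilon_\rho^{n+1})$-translates, and we fold the sum over $\varepsilon_\rho^n\lambda$ into an integral over $t\in\R^+$. This yields $\int_0^\infty \lg(t)\,(\omega(g_\tau)\Xi_\infty^\pm)(\lambda t,\lambda' t^{-1})\,d^\times t$. We then write $\lg(t)=2\log t-2\log\varepsilon_\rho\lfloor\cdot\rfloor$ on each fundamental interval and substitute $t\mapsto\lambda_n t$. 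This splits the integral into three pieces:
\begin{enumerate}
\item a term linear in $\log t$, which produces the $K$-Bessel derivative $-\partial_sK_s(2\pi v|\alpha|)|_{s=1/2}$;
\item a constant term $-\log\lambda_0$, which produces $e^{-2\pi v|\alpha|}/\sqrt{v|\alpha|}$ via \eqref{special integrals};
\item the floor-function jumps, where each boundary point $\lambda_n$ contributes an incomplete Gaussian integral, i.e.\ an $\erfc$ term.
\end{enumerate}
The vanishing patterns depending on $\pm\sgn(\alpha)$ come from $\sgn(ab)$ in \eqref{special integrals}. The main obstacle is getting the bookkeeping of these three pieces, the signs $\sgn(\lambda')$, and the $v$-powers exactly right. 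Our first step there is to do the $t$-integral with $\log t$ replaced by $s$-derivatives of $t^{2s}$, so that everything reduces to $\int_0^\infty t^{2s}(\dots)e^{-\pi v(\lambda^2t^2+\lambda'^2t^{-2})}d^\times t$, which is a $K$-Bessel function.

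\emph{Lowering equation.} We use the standard identity $L\,\omega(g)\Xi_\infty^+=\omega(g)\Xi_\infty^-$ up to an $H_U(\R)$-derivative. Precisely, $L\Xi^+_\infty$ equals $\Xi^-_\infty$ plus $\frac{d}{dt}$ of the $t$-translated Gaussian term. Applying this inside \eqref{deformed theta integral} and integrating by parts in $t$ on each $\Gamma_\rho$-fundamental interval, the derivative falls on $\lg$, which is $2\log\varepsilon_\rho$ times a sawtooth. Away from the jumps $\lg'$ is constant, and its integral reassembles $\theta_\rho(g,\Xi_\infty^-)$. The jump at the endpoint of each fundamental interval leaves the boundary term $\log\varepsilon_\rho\,\theta_{\rho,f}(g,\Xi_\infty^-)$, which is the integral over the finite adeles only. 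As an alternative check, we can verify the identity coefficientwise using the explicit $W^\pm$ computed above.
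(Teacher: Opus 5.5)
\paragraph{Fourier coefficients.} This part of your plan is the paper's argument. You unfold so that only $\beta\in\lambda F^1$ survive, split off $W_f(\alpha)$, rescale $h=t\sqrt{|\lambda'/\lambda|}$, and write the sawtooth as $\log t-n\log\varepsilon_\rho$ on $[\varepsilon_\rho^n,\varepsilon_\rho^{n+1}]$. The three pieces then match the paper's decomposition:
\begin{itemize}
\item the $\log t$ piece gives $-\partial_sK_s|_{s=1/2}$, and vanishes by parity in one sign case;
\item the $-\log\lambda_0$ piece is evaluated via \eqref{special integrals};
\item the floor-function piece is a telescoping $\erfc$ sum.
\end{itemize}

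The reason you give for the factorization is not right, however. A function of the form \eqref{Deformed character} on $H_U(\Q)\backslash H_U(\A)/K_\rho$ depends on $h_f$ through the rational component $\gamma$ in $h_f=\gamma k\xi$. After unfolding, the archimedean integral therefore sees an element of $\lambda F^1$ depending on $\gamma$, not $\lambda$ itself. Strictly, one gets a sum over $\gamma\in H_U(\Q)/\Gamma_\rho$, i.e.\ over $\Gamma_\rho$-orbits of $\beta$ with $\Nm\beta=\alpha$, and each orbit carries its own $\log\lambda_0$. The single-$\lambda$ product holds only orbit by orbit. The paper's ``splitting into the finite and infinite part'' makes exactly the same simplification, so this is not a deviation from its proof.

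\paragraph{Lowering equation.} Your sketch genuinely breaks here. The paper does not prove this step; it quotes it from \cite[Theorem 2.7]{BLY22} (see also \cite[Proposition 2.2]{CL20}).

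The identity you start from, that $L\Xi^+_\infty$ equals $\Xi^-_\infty$ plus a $t$-derivative, is false. With $\rho$ in place of $\wrho$ it would give $L\theta_\rho(g,\Xi_f\Xi^+_\infty)=\theta_\rho(g,\Xi_f\Xi^-_\infty)$. But $\theta_\rho(g,\Xi_f\Xi^+_\infty)$ is holomorphic: its lowering is the integral of a $t$-derivative over the compact quotient $\Gamma_\rho\backslash\R^+$, against the locally constant $\rho_\infty$. In the deformed case your identity would also leave an extra term $\theta_{\wrho}(g,\Xi_f\Xi^-_\infty)$, which is not in the statement.

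The correct identity has no standalone $\Xi^-_\infty$ term: $L$ of the weight $+1$ kernel is a constant multiple of $t\partial_t$ of the weight $-1$ kernel $t\mapsto\omega(g)\Xi^-_\infty(tx_1,t^{-1}x_2)$. To check this, take $g=g_\tau$, the normalization $v^{\mp1/2}$, and $L=-2iv^2\partial_{\bar\tau}$. Both sides are then multiples of $v^{3/2}(x_1+x_2)\bigl(1-2\pi v(x_1-x_2)^2\bigr)e^{-\pi v(x_1-x_2)^2}\e(\tau x_1x_2)$. By contrast, the weight $-1$ kernel itself is $v^{3/2}(x_1-x_2)e^{-\pi v(x_1-x_2)^2}\e(\tau x_1x_2)$.

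With this identity, your integration by parts over a period $[1,\varepsilon_\rho)$ works as you describe. The constant slope of the sawtooth produces the $\theta_\rho(g,\Xi_f\Xi^-_\infty)$ term. Its jump at $t\in\varepsilon_\rho^{\Z}$ produces the boundary term $\log\varepsilon_\rho\,\theta_{\rho,f}(g,\Xi_f\Xi^-_\infty)$. Signs are fixed by the conventions for $L$, the $H_U(\R)$-action and the sawtooth.
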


\begin{proof}
Take $g = t(\alpha)(1, g_\tau) \in \GL_2(\A)$. 
Then the Whittaker function of $\theta_{\wrho}(g,\Xi_f\Xi_\infty^\pm)$ equals
\begin{align*}
W_{\wrho}(g , \Xi_f\Xi_\infty^\pm)  
=& |\alpha|^{1/2} \chi_F(\alpha)
\int_{\A_{F^1}} \lb \omega \left(g_\tau\right) \Xi_f\Xi_\infty^\pm \rb \left(\alpha \lambda^{-1} h^{-1}\right) \wrho \left(h\lambda \right) d h.
\end{align*}
Splitting into the finite and infinite part,
\begin{align*}
W_{\wrho}(g , \Xi_f\Xi_\infty^\pm)
=& 
\int_{\A_{F^1,f}} \Xi_f \left(\alpha \lambda^{-1} h^{-1}\right) \rho \left( h \right) d h
\int_{\R^\times} \omega \left(g_\tau\right) \Xi_\infty^\pm \left(\lambda^{\prime} h^{-1}\right) \lg \left(h\lambda \right) \frac{dh}{h}.
\end{align*}
The integral at the infinite place equals
\begin{align*}
&\int_{\R^\times} \e (\alpha u) \sqrt{v} \Xi_\infty^\pm \left(\sqrt{v}\lambda^{\prime} h^{-1}\right) \lg \left(h\lambda \right) \frac{dh}{h}\\
=& 2 \log \varepsilon \e (\alpha u) v
\int_{\R^\times} (h^{-1} \lambda^\prime \pm h\lambda) \exp \lb -\pi v (h^2 \lambda^2 +h^{-2}\lambda^{\prime,2}) \rb  
\left\{ \frac{\log |h|}{\log \varepsilon} \right\} \frac{dh}{h}\\
=&  4 \e (\alpha u) v \sqrt{|\alpha|} \sgn(\lambda^\prime)
\sum_{n\in \Z} \int_{\varepsilon_\rho^n}^{\varepsilon_\rho^{n+1}} (\lambda_0^{-1} t^{-1} \pm \sgn(\alpha) \lambda_0 t ) \\
&\quad \times
\exp \lb -\pi v |\alpha| (\lambda_0^2 t^2 + \lambda_0^{-2} t^{-2}) \rb   (\log t - n \log \varepsilon_\rho ) \frac{dt}{t}.
\end{align*}
where we make the change of variable $h=t \sqrt{|\lambda^\prime/\lambda|}$.  
Rewriting $t=t\lambda_0/\lambda_0$, the summation over $n\in\Z$ decomposes as
\begin{align}\label{three integrals}
\begin{split}
&\quad \int_{-\infty}^{+\infty} (e^{-u} \pm \sgn(\alpha) e^u ) 
\exp \lb -\pi v |\alpha| (e^{2u} + e^{-2u}) \rb   u du\\
&- \log \lambda_0 \int_{0}^{\infty} ( t^{-1} \pm \sgn(\alpha) t ) 
\exp \lb -\pi v |\alpha| ( t^2 +  t^{-2}) \rb   \frac{dt}{t}\\
&- \log \varepsilon_\rho \sum_{n\in \Z} n \int_{\varepsilon_\rho^n}^{\varepsilon_\rho^{n+1}} (\lambda_0^{-1} t^{-1} \pm \sgn(\alpha) \lambda_0 t ) 
\exp \lb -\pi v |\alpha| (\lambda_0^2 t^2 + \lambda_0^{-2} t^{-2}) \rb   \frac{dt}{t}.
\end{split}
\end{align}
The first integral in \eqref{three integrals} is $-\left.\frac{\partial}{\partial s} K_s(2 \pi v|\alpha|)\right|_{s=1 / 2} $ when $ \pm \sgn(\alpha) = -$ and 0 otherwise.
Using the identity
\begin{align*}
\int_{0}^{\infty} (t^{-1} \pm t ) \exp ( -\pi v  ( t^2 + t^{-2} ) ) \frac{dt}{t} = 
\begin{cases}
0,& - \\ 
\exp (-2\pi v) / \sqrt{v} ,& + \\ 
\end{cases},
\end{align*}
the second integral in \eqref{three integrals} equals
\begin{align*}
&-\log \lambda_0 \times
\begin{cases}
0,& \pm \sgn(\alpha) = -\\
\exp (-2\pi v|\alpha|) / \sqrt{v|\alpha|} ,& \pm \sgn(\alpha) = +
\end{cases}.
\end{align*}
Applying the identity $d_t \erf (at\pm bt^{-1}) = \frac{2}{\sqrt{\pi}} (at\mp bt^{-1}) \exp (-(at+bt^{-1})^2)$, where $d_t = d\frac{\partial}{\partial_t}$, the integral
\begin{align*}
&\int_{\varepsilon_\rho^n}^{\varepsilon_\rho^{n+1}} (\lambda_0^{-1} t^{-1} \pm \sgn(\alpha) \lambda_0 t ) \exp \lb -\pi v |\alpha| (\lambda_0 t^2 + \lambda_0^{-2} t^{-2}) \rb  \frac{dt}{t}\\
=& \exp (2\pi v|\alpha|) \frac{1}{2\sqrt{ v |\alpha|}}  
\lb \erfc (\sqrt{\pi v |\alpha|} (\lambda_{n}^{-1} \mp \sgn(\alpha) \lambda_{n} ) )
- \erfc (\sqrt{\pi v |\alpha|} (\lambda_{n+1}^{-1} \mp\sgn(\alpha) \lambda_{n+1} ) ) \rb,
\end{align*}
Therefore, the third term in \eqref{three integrals} becomes
\begin{align*}
& -\log \varepsilon_\rho \exp (2\pi v|\alpha|) \frac{1}{2\sqrt{ v |\alpha|}}  \sum_{n\in \Z} 
\erfc (\sqrt{\pi v |\alpha|} (\lambda_{n}^{-1}  \mp \sgn(\alpha) \lambda_{n} ) ).
\end{align*}
Putting these together gives the $\alpha$-th Whittaker coefficient.
The last claim is proved in \cite[Theorem 2.7]{BLY22}, see also \cite[Proposition 2.2]{CL20}.
\end{proof}

\begin{remark}\label{typo bey}
There is a typo in \cite[Proposition 5.5]{CL20}, where the term $\Gamma (0, 4\pi Q(\Lambda)v)$ in $J_2(\Lambda)$ should be $W_2$ as seen in the previous proof.
\end{remark}

\subsection{Doi-Naganuma lift of deformed theta integrals}
For $\varphi \in \Sc(V(\A))$ and $\Xi \in \Sc(U(\A))$, we denote the Doi-Naganuma lift of $\theta_{\wrho} (g,\Xi)$ by
\begin{align}\label{DN deformed}
\wI (\hb,\varphi,\Xi,\rho) = \int_{[G]} \theta_V ( g d(\nu(\hb)),\hb,\varphi) 
\int_{F^1\backslash \A_{F^1}} \theta ( g d(\nu(\hb)), h\lambda,\Xi) \wrho ( h\lambda ) dh dg,
\end{align}
where $\hb \in \GL_2(\A_{\wf_2})$ and $\lambda\in \A_F^\times$ satisfy $\nu(\lambda)=\nu(\hb)$. 
This definition doesn't depend on the choice of $\lambda$. 
To consider the preimage of $I (\hb,\varphi,\Xi,\rho)$ under the lowering operator, we define the Doi-Naganuma lift of $\theta_{\rho,f}(g,\Xi)$ as
\begin{align}\label{Finite DN deformed}
I_f(\hb,\varphi,\Xi,\rho) = \int_{[G]} \theta_V ( g d(\nu(\hb)),\hb,\varphi) 
\int_{F^1\backslash \A_{F^1,f}} \theta ( g d(\nu(\hb)), h\lambda,\Xi) \rho ( h\lambda ) dh dg.
\end{align}

\begin{proposition}\label{lowering DN deformed}
For any $\varphi_f \in \Sc(V(\A_f))$ and $\Xi_f \in \Sc(U(\A_f))$, we have
\begin{align*}
&\left(L_1+L_2\right) \mathrm{RC}_r \wI (\hb,\varphi_f \varphi_\infty^{(1,1)},\Xi_f \Xi_\infty^+,\rho)\\
= & -(-4 \pi)^{-r}\left(R_1+R_2\right)^r 
I(\hb,\varphi_f (\varphi_\infty^{(1,-1)}-(-1)^r \varphi_\infty^{(-1,1)}),\Xi_f \Xi_\infty^-,\rho) \\
& -2 \log \varepsilon_\rho \widetilde{\mathrm{RC}}_r 
I_f\left(\hb,\varphi_f ( \varphi_\infty^{(1,-1)} -(-1)^r \varphi_\infty^{(-1,1)}), \Xi_f\Xi_\infty^-, \rho\right),
\end{align*}
where $\mathrm{RC}_r$ and $\widetilde{\mathrm{RC}}_r$ are differential opeartos defined in \cite[Equation (2.8)]{BLY22}.
\end{proposition}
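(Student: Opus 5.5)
We will follow the template of \cite[Theorem 2.7, Proposition 4.7]{BLY22}. The idea is to push the lowering identity of Lemma \ref{lowering deformed} through the Doi-Naganuma integral \eqref{DN deformed} by moving differential operators from the Hilbert variable $\hb$ to the elliptic variable $g$. The key input is the standard intertwining relation for the signature $(2,2)$ Weil representation, which we would verify directly from the explicit formulas \eqref{weight pm 1}. In that relation, the Lie algebra action of the lowering operator $L_1+L_2$ of $\SL_2(\R)^2$ on $\varphi_\infty^{(1,1)}$, via $i$ in \eqref{embedding GL2}, corresponds to the action of the elliptic raising/lowering operator on the $\SL_2(\R)$ side. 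Concretely, the first step is to check
\begin{align*}
L_{\hb}\,\omega(g,\hb)\varphi_\infty^{(1,1)} = -\,L_g\,\omega(g,\hb)\bigl(\varphi_\infty^{(1,-1)}\ \text{or}\ \varphi_\infty^{(-1,1)}\bigr)
\end{align*}
up to the sign conventions of \cite[(4.19)]{BLY22}, together with the analogous raising identities needed for $\mathrm{RC}_r$. This is a finite computation with Hermite-type polynomials times the Gaussian.

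The second step is to insert this into \eqref{DN deformed}. We differentiate under the integral over $[G]$, which is justified because the theta integrand decays rapidly and $\wrho$ is bounded. We then integrate by parts (Stokes) on $[G]$ to move $L_g$ onto the inner factor $\theta_{\wrho}(g,\Xi_f\Xi_\infty^+)$. Boundary terms vanish since the Hecke integral is cuspidal and the $\theta_V$ factor is of moderate growth. The pairing of a weight $1$ form with a weight $-1$ form, together with $\Xi_\infty^+$ of weight $+1$, explains why the two Hilbert Gaussians $\varphi_\infty^{(1,-1)}$ and $\varphi_\infty^{(-1,1)}$ appear with the relative sign $(-1)^r$ after applying $\mathrm{RC}_r$. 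Applying Lemma \ref{lowering deformed}, $L\theta_{\wrho}(g,\Xi_\infty^+) = \theta_\rho(g,\Xi_\infty^-) + \log\varepsilon_\rho\,\theta_{\rho,f}(g,\Xi_\infty^-)$, turns the two summands into $I(\hb,\cdot,\Xi_f\Xi_\infty^-,\rho)$ and $I_f(\hb,\cdot,\Xi_f\Xi_\infty^-,\rho)$ respectively. This produces the factor $2\log\varepsilon_\rho$ once the normalization of $\lg$ in \eqref{Deformed character} is accounted for.

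The third step handles $r>0$. We commute $(L_1+L_2)$ past $\mathrm{RC}_r$ using the commutation relations from \cite[Equation (2.8)]{BLY22}. On the first term this yields $(R_1+R_2)^r$ with constant $-(-4\pi)^{-r}$, and on the $I_f$ term it yields the modified operator $\widetilde{\mathrm{RC}}_r$. As a cross-check, we would compare the Whittaker coefficients of both sides using \eqref{whittaer doi} and Lemma \ref{lowering deformed}.

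The main obstacle is the first step: matching signs and the exact constants in the archimedean intertwining identity. In particular, we must confirm that $L_1+L_2$ applied to $\varphi_\infty^{(1,1)}$ lands on the specific combination $\varphi_\infty^{(1,-1)}-(-1)^r\varphi_\infty^{(-1,1)}$ under the paper's orientation conventions. We would settle this first by a direct Fourier-transform computation using $\Fc(\varphi_\infty^{(k,k')})((0,r),\nu)$ from \cite[(4.19)]{BLY22}.
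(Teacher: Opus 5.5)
Your plan is essentially the paper's proof. The paper simply defers to the argument of \cite[Proposition 4.2]{BLY22}, with the single stated modification that no Siegel--Weil formula is used: intertwine $L_1+L_2$ on $\varphi_\infty^{(1,1)}$ with the elliptic lowering operator on $\varphi_\infty^{(\pm1,\mp1)}$, move it onto $\theta_{\wrho}$ by invariance of the measure on $[G]$, apply Lemma~\ref{lowering deformed}, and pass through $\mathrm{RC}_r$. In Step 3 you should make two points explicit. First, the collapse to $(R_1+R_2)^r$ on the $I$-term rests on $\theta_\rho(g,\Xi_f\Xi_\infty^-)$ being annihilated by the raising operator, which via your intertwining identities gives the (anti)holomorphy of $I(\hb,\varphi_f\varphi_\infty^{(\pm1,\mp1)},\Xi_f\Xi_\infty^-,\rho)$ in $z_1,z_2$; $\theta_{\rho,f}$ has no such property, which is why the $I_f$-term carries $\widetilde{\mathrm{RC}}_r$ instead. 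Second, the factor $2$ in $2\log\varepsilon_\rho$ cannot come from the normalization in \eqref{Deformed character}, since that is already built into Lemma~\ref{lowering deformed}; it has to be matched against the normalizations of $\widetilde{\mathrm{RC}}_r$ and $I_f$ from \cite{BLY22}.
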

\begin{proof}
This follows the same as the proof of Proposition 4.2 in \cite{BLY22} but without using the Siegel-Weil formulas.
\end{proof}

Now we calculate the Fourier expansion of Doi-Naganuma lift of deformed theta integrals.
Let $z=(z_1,z_2)=(x_1+iy_1,x_2+iy_2) \in \h^2$.
Define 
\begin{align*}
\wI (z,\varphi_f\varphi_\infty^{(1,1)},\Xi_f\Xi_\infty^+,\rho) 
=  \sqrt{y_1y_2}^{-1} \wI (\hb_z,\varphi_f\varphi_\infty^{(1,1)},\Xi_f\Xi_\infty^+,\rho),
\end{align*}
where $\hb_z = (h_{z_1},h_{z_2}) = (n(x_1)m(\sqrt{y_1}),n(x_2)m(\sqrt{y_2}))$.

\begin{proposition}\label{FE deformed}
Let $\alpha\in \wf_2^\times$ and $a_1=a_2=1$.
Suppose that there exists $\lambda\in F$ such that $\Nm\ \alpha = \Nm\ \lambda^{-1}$. 
Denote by $\lambda_n = \sqrt{|\lambda^\prime/\lambda|}\varepsilon_\rho^{n}$, $n\in\Z$. 
Then the $\alpha$-th Fourier coefficient of $\wI (z,\varphi_f\varphi_\infty^{(1,1)},\Xi_f\Xi_\infty^+,\rho)$ is 
$\widetilde{W}(z,\alpha) 
=\wW_f(\alpha)  \wW_\infty(z,\alpha) $, where
\begin{align}
\wW_f(\alpha)  &=  \int_{\A_{F^1,f}} \int_{\A_f^\times} \label{finit whittaker deformed}
\chi_{\wf_2}(a) \chi_F(a) \Fc (\varphi_f)\lb (0,a^{-1},a\alpha^{\prime} \rb
\times \Xi_f (a h^{-1} \lambda^{-1})  d^\times a \rho(h) dh,\\
\wW_\infty(z,\alpha)  &= a_\alpha(z) + b_\alpha (z), \label{infinit whittaker deformed}
\end{align}
and 
\begin{align*}
a_\alpha(z) &= 
-4\log \lambda_0 \frac{1+\sgn(\alpha)}{2} \frac{1+\sgn(\alpha')}{2} \mathbf{e}\left(\alpha^\prime z_1 + \alpha z_2 \right),\\
b_\alpha(z) &=4  \sqrt{|\Nm \alpha|} \mathbf{e}\left(x_1 \alpha^\prime + x_2 \alpha \right)  \int_{\R^+} K_\alpha(a,y_1,y_2) (U_2+U_3)  d a,\\
U_2 &= \begin{cases}
0, & \sgn(\Nm\alpha) = +\\
-\left.\frac{\partial}{\partial s} K_s(2 \pi va^2|\Nm\alpha|)\right|_{s=1 / 2}, & \sgn(\Nm\alpha) = -
\end{cases},\\
U_3 &= -\log \varepsilon_\rho \exp (2\pi a^2|\Nm\alpha|) \frac{1}{2\sqrt{ a^2|\Nm\alpha|}}  \sum_{n\in \Z} 
\erfc (\sqrt{\pi a^2|\Nm\alpha|} (\lambda_{n}^{-1}  - \sgn(\Nm\alpha) \lambda_{n} ) ),\\
K_\alpha(a,y_1,y_2) &= \lb a^{-1} \sqrt{y_1y_2} + a\alpha \sqrt{y_2/y_1} + a\alpha^\prime \sqrt{y_1/y_2} \rb 
\exp (-\pi (a^{-2} y_1y_2 + a^2 \alpha^2 y_2/y_1 + a^2 \alpha^{\prime,2} y_1/y_2 ) ).
\end{align*}
Furthermore, $b_\alpha(z)$ satisfies 
\begin{align}\label{estimation rho W23}
\lim_{y\rightarrow \infty} y^{-c} \partial_{y_1}^a \partial_{y_2}^b b_\alpha (z) \mid_{y_1=y_2=y} = 0
\end{align}
for any $a,b,c\in \N_0$,
\end{proposition}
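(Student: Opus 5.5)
We follow the unfolding already carried out for $I(\hb,\varphi,\Xi,\rho)$ in \eqref{whittaer doi}, with $\rho$ replaced by $\wrho=\lg\cdot\rho$. The key observation is that the unfolding never used that $\rho$ is a character on the archimedean component; it only used $F^\times$-invariance, right $K_\rho$-invariance and the Iwasawa decomposition. All three hold for $\wrho$.

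\textbf{Unfolding.} Repeating the argument leading to \eqref{whittaer doi} gives the $\alpha$-th Whittaker coefficient at $\hb=t(\alpha)\hb_z$. The term \eqref{inner whittaker one} again contributes nothing. The term \eqref{inner whittaker two} yields a double integral over $\A_{F^1}\times(\A_f^\times\times\R^+)$. Since the Schwartz data and measures factor, and $\lg$ only depends on the archimedean component modulo $\Gamma_\rho$, this integral splits as a finite integral times an archimedean integral. Taking $\lambda\in F$ with $\Nm\alpha=\Nm\lambda^{-1}$, as in \eqref{whittaer doi t(alpha)}, and using $\rho$ trivial on $F^\times$ to move $\lambda$ across, the finite part is exactly $\wW_f(\alpha)$ in \eqref{finit whittaker deformed}.

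\textbf{Archimedean integral.} This is
\[
\int_{\R^+}\int_{\R^\times}\Fc(\omega(\hb_z)\varphi_\infty^{(1,1)})((0,a^{-1}),a\alpha')\,\Xi^+_\infty(a h^{-1}\lambda^{-1})\lg(h\lambda)\,\tfrac{dh}{h}\,d^\times a.
\]
By the formula for $\Fc(\varphi_\infty^{(k,k')})$ quoted from \cite[(4.19)]{BLY22}, the $\varphi$-factor at $\hb_z$, after normalising by $\sqrt{y_1y_2}^{-1}$, becomes $\mathbf{e}(x_1\alpha'+x_2\alpha)K_\alpha(a,y_1,y_2)$. The inner $h$-integral is then precisely the one computed in the proof of Lemma \ref{lowering deformed}, with $v|\alpha|$ replaced by $a^2|\Nm\alpha|$ and $\sgn\alpha$ by $\sgn\Nm\alpha$. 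It therefore decomposes into the three pieces of \eqref{three integrals}, which give $W_1,U_2,U_3$.

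The pieces $U_2,U_3$ remain inside the $a$-integral and form $b_\alpha(z)$. The $W_1$ piece is proportional to $-\log\lambda_0\,e^{-2\pi a^2|\Nm\alpha|}/\sqrt{a^2|\Nm\alpha|}$ when $\Nm\alpha>0$. Integrating it against $K_\alpha$ in $a$, using \eqref{special integrals} exactly as in Proposition \ref{match weight}, collapses to $\frac{1+\sgn\alpha}{2}\frac{1+\sgn\alpha'}{2}\mathbf{e}(\alpha'z_1+\alpha z_2)$ with constant $-4\log\lambda_0$, which is $a_\alpha(z)$. We track constants and the sign $\sgn(\lambda')$ so that they combine into the stated factor $4\sqrt{|\Nm\alpha|}$.

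\textbf{Growth estimate \eqref{estimation rho W23}.} This is the main obstacle. On the diagonal $y_1=y_2=y$ we have
\[
K_\alpha=(a^{-1}y+a(\alpha+\alpha'))\exp\bigl(-\pi(a^{-2}y^2+a^2(\alpha^2+\alpha'^2))\bigr).
\]
For $U_2$ we use the exponential decay of $\partial_sK_s$ at $s=1/2$. For $U_3$ we use $\erfc(x)\le e^{-x^2}$ for $x\ge0$, which cancels the factor $e^{2\pi a^2|\Nm\alpha|}$ up to $e^{-\pi a^2|\Nm\alpha|(\lambda_n^{-1}-\lambda_n)^2}$-type terms. The concern is the indices $n$ with negative argument, where $\erfc\approx 2$; for these we use $\sum_n$ over a geometric sequence $\lambda_n$ to bound the count by $O(1+|\log a|)$.

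The resulting bound is $O(\mathrm{poly}(a,a^{-1},y)\,e^{-\pi a^{-2}y^2-c a^2})$. Splitting the $a$-integral at $a=\sqrt y$, the term $e^{-\pi y^2/a^2}$ or $e^{-ca^2}$ is at most $e^{-c'y}$ on each side. So $b_\alpha$ and, by differentiating under the integral (derivatives only produce polynomial factors), all its $y_1,y_2$-derivatives decay faster than any power of $y$. If $\Nm\alpha<0$, the $U_2$ term carries an extra $e^{-2\pi a^2|\Nm\alpha|}$ and is even easier.
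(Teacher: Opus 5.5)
\textbf{The decay estimate \eqref{estimation rho W23} has a genuine gap when $\Nm\alpha>0$.}

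Your derivation of the coefficient itself follows the paper's proof. You unfold as in \eqref{whittaer doi}, reduce the inner $h$-integral to the computation of Lemma \ref{lowering deformed} with $v|\alpha|$ replaced by $a^2|\Nm\alpha|$, and collapse the $U_1$-piece against $K_\alpha$ via \eqref{special integrals} to get $a_\alpha(z)$.

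One side remark on that part. Your claim that $\lg$ ``depends only on the archimedean component modulo $\Gamma_\rho$'' does not by itself justify splitting into a finite integral times an archimedean one. In the coordinates of \eqref{sta so u}, $\lg(h)$ is a function of $\gamma^{-1}h_\infty$, where $\gamma\in F^1$ is determined by $h_f\in\gamma K_\rho\xi$, so $h_f$ and $h_\infty$ are coupled. The paper performs the same splitting without comment, so this is not where you depart from it.

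The departure is the decay statement \eqref{estimation rho W23}. The paper takes it from the proof of \cite[Proposition 4.7]{BEY21}; your direct argument fails when $\Nm\alpha>0$.
\begin{itemize}
\item In that case the argument $x_n=\sqrt{\pi a^2|\Nm\alpha|}\,(\lambda_n^{-1}-\lambda_n)$ of $\erfc$ is negative for \emph{every} $n$ with $\lambda_n>1$. This is an infinite set, independent of $a$. So the terms with $\erfc\approx 2$ cannot be counted by $O(1+|\log a|)$, and $\sum_{n\in\Z}\erfc(x_n)$, read literally, diverges.
\item Nor does $\erfc(x)\le e^{-x^2}$ cancel the prefactor. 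For $\lambda_n$ near $1$ the term $e^{2\pi a^2|\Nm\alpha|}\erfc(x_n)$ has size about $e^{2\pi a^2|\Nm\alpha|}$. On the diagonal, $K_\alpha$ supplies only $e^{-\pi a^2(\alpha^2+\alpha'^2)}$, which leaves $e^{-\pi a^2(\alpha-\alpha')^2}$, and this equals $1$ for $\alpha\in\Q^\times$.
\end{itemize}
So your bound $e^{-\pi a^{-2}y^2-ca^2}$ with $c>0$ is not available, and even convergence of the $a$-integral is not secured.

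\textbf{How to repair it.} You need to recompute the third term of \eqref{three integrals} in the case $\pm\sgn=+$ rather than quote it.
\begin{itemize}
\item Prefactor: put $c=\sqrt{\pi a^2|\Nm\alpha|}$ and $u=c(\lambda_0t-\lambda_0^{-1}t^{-1})$. Then $(\lambda_0^{-1}t^{-1}+\lambda_0t)\frac{dt}{t}=c^{-1}du$ and $c^2(\lambda_0^2t^2+\lambda_0^{-2}t^{-2})=u^2+2c^2$. So the prefactor is $e^{-2\pi a^2|\Nm\alpha|}$, not $e^{+2\pi a^2|\Nm\alpha|}$.
\item Summation: in the Abel summation of $\sum_n n\,(\cdots)$ you must keep the boundary terms, because $\erfc(x_n)\to 2$ as $n\to+\infty$. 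This turns the series into the absolutely convergent $\sum_{n\in\Z}\bigl(\erfc(x_n)-2\cdot\mathbf{1}_{n\ge 1}\bigr)$, with the sign of the whole term adjusted accordingly.
\item Consequences: using $\erfc(x)-2=-\erfc(-x)$, all but finitely many terms are $O(e^{-x_n^2})$, and the sum is $O(1+\log^+(1/a))$. On the diagonal, $K_\alpha U_3$ then carries $e^{-\pi a^{-2}y^2-\pi a^2(\alpha+\alpha')^2}$ with $\alpha+\alpha'\neq 0$.
\end{itemize}
From there, your splitting of the $a$-integral at $a=\sqrt{y}$ and differentiation under the integral go through. For $\Nm\alpha<0$, where all arguments are $c(\lambda_n^{-1}+\lambda_n)>0$, your argument is fine as written.
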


\begin{remark}\label{comparing finite whittaker}
It's clear from the expression that the finite part $\wW_f(\alpha)$ is the same the finite part of $W(t(\alpha))$ in Equation \eqref{whittaer doi t(alpha)}.
\end{remark}

\begin{proof}
Using Lemma \ref{weil mixed model}, 
the infinite part of the Whittaker function of $\wI (\hb_z,\varphi_f\varphi_\infty^{(1,1)},\Xi_f\Xi_\infty^+,\rho)$ at $t(\alpha)\hb_z$ is
\begin{align*}
&\wW (t(\alpha)\hb_z)_\infty
= \int_{\R^+} \int_{\R^\times} \Fc (L (\hb_z) \varphi_\infty^{(1,1)} ) \lb (0,a^{-1}),a\alpha^\prime \rb
\times (L \lb h \rb \Xi_\infty^+)(a \lambda^{-1})  d^\times a \wrho (h\lambda)  d^\times h\\
=& \sqrt{y_1y_2} \int_{\R^+} \int_{\R^\times} \mathbf{e}\left(x_1 \alpha^\prime + x_2 \alpha \right) 
\lb a^{-1} \sqrt{y_1y_2} + a\alpha \sqrt{y_2/y_1} + a\alpha^\prime \sqrt{y_1/y_2} \rb \\
&\times \exp (-\pi (a^{-2} y_1y_2 + a^2 \alpha^2 y_2/y_1 + a^2 \alpha^{\prime,2} y_1/y_2 ) )
\times (L \lb h \rb \Xi_\infty^+)(a \lambda^{-1})  d^\times a \wrho (h\lambda)  d^\times h.
\end{align*}
Without loss of generality, we assume $\lambda' >0$. 
The inner integral over $\R^\times$ equals
\begin{align*}
& \int_{\R^\times} \Xi_\infty^+(ha \lambda^{-1},h^{-1}a\lambda^{-1,\prime}) \wrho (h\lambda)  d^\times h \\
=& 4 \log \varepsilon_\rho  \int_{\R^+} (ha\lambda^{-1} + h^{-1}a\lambda^{-1,\prime}) \exp (-\pi (ha\lambda^{-1})^2- \pi (h^{-1}a\lambda^{-1,\prime})^2) \
\left\{ \frac{\log h}{\log \varepsilon_\rho} \right\} \frac{dh}{h}\\
=& 4 \log \varepsilon_\rho \sqrt{|\Nm \lambda^{-1}|} a 
\int_{\R^+} ( t^{-1} +\sgn (\Nm\lambda^{-1})t) \\
& \times \exp \lb -\pi a^2 |\Nm \lambda^{-1}| (t^2 + t^{-2}) \rb
\left\{ \frac{\log t}{\log \varepsilon_\rho} + \frac{1}{2}\frac{\log |\lambda^{-1,\prime}/\lambda^{-1}|}{\log \varepsilon_\rho} \right\} \frac{dt}{t},
\end{align*}
where we make the change of variable $t=h\sqrt{|\lambda^{-1}/\lambda^{-1,\prime}|}$. 
Similarly as the proof of Lemma \ref{lowering deformed}, we obtain $4  \sqrt{|\Nm \alpha|} a (U_1+U_2+U_3)$, where
\begin{align*}
U_1 &= -\log \lambda_0 \times
\begin{cases}
0,& \sgn(\Nm\alpha) = -\\
\exp (-2\pi a^2|\Nm\alpha|) / \sqrt{a^2|\Nm\alpha|} ,& \sgn(\Nm\alpha) = +
\end{cases},\\
U_2 &= \begin{cases}
0, & \sgn(\Nm\alpha) = +\\
-\left.\frac{\partial}{\partial s} K_s(2 \pi va^2|\Nm\alpha|)\right|_{s=1 / 2}, & \sgn(\Nm\alpha) = -
\end{cases},\\
U_3 &= -\log \varepsilon_\rho \exp (2\pi a^2|\Nm\alpha|) \frac{1}{2\sqrt{ a^2|\Nm\alpha|}}  
\sum_{n\in \Z} \erfc (\sqrt{\pi a^2|\Nm\alpha|} (\lambda_{n}^{-1} - \sgn(\Nm\alpha) \lambda_{n} ) ).
\end{align*}
Substituting into Whittaker functions, we get
\begin{align*}
&\wW (\pmat{\alpha}{}{}{1}\hb_z)_\infty
=  \sqrt{y_1y_2} \int_{\R^+} \mathbf{e}\left(x_1 \alpha^\prime + x_2 \alpha \right) 
\lb a^{-1} \sqrt{y_1y_2} + a\alpha \sqrt{y_2/y_1} + a\alpha^\prime \sqrt{y_1/y_2} \rb \\
&\quad \times \exp (-\pi (a^{-2} y_1y_2 + a^2 \alpha^2 y_2/y_1 + a^2 \alpha^{\prime,2} y_1/y_2 ) )
\times 4  \sqrt{|\Nm \alpha|} a (U_1+U_2+U_3)  d^\times a .
\end{align*}
When $\sgn(\Nm\alpha) = +$, the integral concerns $U_1$ is
\begin{align*}
&  -4\log \lambda_0 \sqrt{y_1y_2} \int_{\R^+} \mathbf{e}\left(x_1 \alpha^\prime + x_2 \alpha \right) 
\lb a^{-1} \sqrt{y_1y_2} + a\alpha \sqrt{y_2/y_1} + a\alpha^\prime \sqrt{y_1/y_2} \rb \\
&\exp (-\pi (a^{-2} y_1y_2 + a^2 \alpha^2 y_2/y_1 + a^2 \alpha^{\prime,2} y_1/y_2 ) ) \exp (-2\pi a^2 \alpha\alpha^\prime) d^\times a\\
=& -4\log \lambda_0 \frac{1+\sgn(\alpha)}{2} \sqrt{y_1y_2} \mathbf{e}\left(\alpha^\prime z_1 + \alpha z_2 \right)
\end{align*}
by Equation \eqref{special integrals}. Therefore, the contribution of $U_1$ in the Fourier coefficient is exactly $a_\alpha(z)$.
The other parts give $b_\alpha(z)$.
The last claim follows the same as in the proof of \cite[Proposition 4.7]{BEY21}.
\end{proof}

\subsection{Twisted integral of Borcherds forms}
Now we calculate the twisted integral of Borcherds forms defined in Section \ref{twisted CM values}, and apply it to obtain twisted CM value formulas.

\begin{theorem}\label{twisted integral botcherds forms}
Let $\chi$ and $\rho$ be the same as in Theorem \ref{twisted siegel weil}. 
For $r \in \mathbb{N}_0$ and an integral lattice $L \subset V_2(\Q)$ of level $N \in \N$, let
\begin{align*}
f=\sum_{m \in \mathbb{Q}, \mu \in L^{\vee} / L} c(m, \mu) q^m \mathfrak{e}_\mu \in M_{-2 r, \rho_L}^{!},\ \text{ and } \
P_r(x)=2^{-r} \sum_{s=0}^r\binom{r}{s}^2(x-1)^{r-s}(x+1)^s
\end{align*}
the $r$-th Legendre polynomial. Then, for any $\phi_f\in\Sc(W(\A_{\wf_2,f}))^{\SL_2(\widehat{\Z})}$, 
\begin{align*}
&(4\pi)^{-r} \int_{\SL_2(\Z)\backslash \h}^{\reg} R_\tau^r f(\tau) 
\left[ \theta_{\wk_4,\chi}(g_\tau^\Delta,\phi_f\phi_\infty^{(1,-1)}) 
- (-1)^r \theta_{\wk_4,\chi}(g_\tau^\Delta,\phi_f\phi_\infty^{(-1,1)}) \right] d\mu(\tau)\\
=& 2 \sqrt{D}^{r} \log \varepsilon_\rho \sum_{\mu\in L^\vee/L} c_\mu(f) 
- \sum_{m>0, \mu \in L^{\vee} / L} c(-m, \mu) \\
&\quad\times \sum_{\substack{\alpha \in \wf_2^+, \operatorname{Tr}(\alpha)=m \\ \exists \lambda_\alpha\in F, \text{ s.t. } \Nm \alpha = \Nm \lambda_\alpha^{-1} }} m^r P_r\left(\frac{\alpha-\alpha^{\prime}}{m}\right) 
\wW_f(\alpha;\lambda_\alpha) \log |\lambda_\alpha/\lambda_\alpha'|,
\end{align*} 
where, for the matching section $(\varphi_f,\Xi_f)$ of $\phi_f$ by Theorem \ref{twisted siegel weil}, 
\begin{align*}
c_\mu(f):= \frac{\sqrt{D}^{-r}}{|\mathrm{SL}_2(\mathbb{Z})/\Gamma(N)|} \int_{\Gamma(N) \backslash \mathbb{H}}^{\mathrm{reg}} f_\mu(\tau) (\widetilde{\mathrm{RC}}_r I_f)\left(g_\tau^{\Delta}, \varphi_f ( \varphi_\infty^{(1,-1)}-(-1)^r \varphi_\infty^{(-1,1)}),\Xi_f\Xi_\infty^-,\rho\right) d \mu(\tau),
\end{align*}
\end{theorem}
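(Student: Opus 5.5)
We follow the strategy of \cite[Theorem 4.8]{BLY22} (and \cite{BEY21}), replacing the Siegel-Weil formula there by Theorem \ref{twisted siegel weil}. First, by Theorem \ref{twisted siegel weil} applied to $\phi_f$, we choose the matching section $(\varphi_f,\Xi_f)=\sum_i(\varphi_{f,i},\Xi_{f,i})$, which is $\SL_2(\widehat{\Z})$-invariant after averaging as in the remark following \eqref{whittaer doi}. By Proposition \ref{match weight}, we rewrite the bracket in the integrand as a constant multiple of
\begin{align*}
I\bigl(g_\tau^\Delta,\varphi_f(\varphi_\infty^{(1,-1)}-(-1)^r\varphi_\infty^{(-1,1)}),\Xi_f\Xi_\infty^-,\rho\bigr).
\end{align*}
Proposition \ref{lowering DN deformed} expresses $-(-4\pi)^{-r}(R_1+R_2)^r$ applied to this lift as $(L_1+L_2)\mathrm{RC}_r\wI(\cdots)$ plus $2\log\varepsilon_\rho\,\widetilde{\mathrm{RC}}_r I_f(\cdots)$. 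Restricting to the diagonal, the operator $(L_1+L_2)$ becomes the lowering operator $L_\tau$. We then move the raising operators from $f$ onto the theta lift, as in the second line of \eqref{theta lift}, which converts $R_\tau^r f$ paired with $I$ into $f$ paired with $L_\tau(\mathrm{RC}_r\wI)^\Delta$, together with the $I_f$-term.

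Next we apply Stokes' theorem on the truncated fundamental domain $\mathcal{F}_T$. Since $f$ is weakly holomorphic, $\bar\partial$ applied to $f\cdot(\mathrm{RC}_r\wI)^\Delta\,d\tau$ reduces to the lowering term. The integral therefore becomes the boundary integral over $u\in[0,1]$ at $v=T$ of $\langle f,(\mathrm{RC}_r\wI)^\Delta\rangle$, plus the term involving $\int^{\reg} f_\mu\,\widetilde{\mathrm{RC}}_r I_f$. The latter is unfolded over $\Gamma(N)\backslash\h$ to give exactly $2\sqrt D^r\log\varepsilon_\rho\sum_\mu c_\mu(f)$; the normalization $\sqrt D^{-r}$ in $c_\mu(f)$ is chosen to absorb the factor produced by $\mathrm{RC}_r$. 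The boundary integral picks out the constant term in $u$. Writing $\wI$ via its Fourier expansion (Proposition \ref{FE deformed}), only those $\alpha$ with $\tr\alpha=m$ pair with $c(-m,\mu)q^{-m}$.

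By \eqref{estimation rho W23}, the $b_\alpha$ parts vanish as $T\to\infty$. Only the $a_\alpha$ terms survive, and these require $\alpha\in\wf_2^+$. On the diagonal, $\mathrm{RC}_r$ applied to $\mathbf e(\alpha'z_1+\alpha z_2)$ yields $m^rP_r((\alpha-\alpha')/m)\,\mathbf e(m\tau)$, exactly as in \cite[Lemma 4.6]{BLY22}. The factor $-4\log\lambda_0=-2\log|\lambda'/\lambda|$ combines with the constants in Proposition \ref{match weight} to produce $-\log|\lambda_\alpha/\lambda_\alpha'|$; the sign is fixed by the convention $\lambda\leftrightarrow\lambda^{-1}$. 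The finite factor is $\wW_f(\alpha;\lambda_\alpha)$ by Remark \ref{comparing finite whittaker}. The constant term $c(0,\mu)$ contributes nothing in the limit because $I$ is cuspidal. Finally, terms with $m\le 0$ and growth in $T$ cancel, since $a_\alpha$ decays exponentially for $\tr\alpha>0$ and the principal part is finite.

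The main obstacle is justifying that $\wI$ has the required moderate growth and invariance so that Stokes' theorem and the regularization behave. In particular, $\wI$ is not automorphic in the naive sense, because $\lg$ is not a character. As in \cite{BLY22}, we expect to handle this by showing that the failure of automorphicity is exactly captured by the $I_f$ term. That term is made automorphic under $\Gamma(N)$, which is why $c_\mu(f)$ is defined as a regularized integral over $\Gamma(N)\backslash\h$. We also need to check that different choices of $\lambda_\alpha$, which differ by powers of $\varepsilon_\rho$, change $\log|\lambda_\alpha/\lambda_\alpha'|$ only by multiples of $\log\varepsilon_\rho$, and that these are absorbed consistently in the sum.
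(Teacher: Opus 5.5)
Your proposal is correct and follows the paper's own route: the paper also passes to $I$ via the matching section of Theorem \ref{twisted siegel weil} (with Proposition \ref{match weight} at infinity), rewrites the integrand using Proposition \ref{lowering DN deformed}, and runs the Stokes/boundary-term argument of \cite[Theorem 5.1]{BEY21} on the Fourier expansion of Proposition \ref{FE deformed}; there the $a_\alpha$ terms give the $\log|\lambda_\alpha/\lambda_\alpha'|$ sum and the $I_f$ term gives $c_\mu(f)$. Two corrections to your commentary: $\theta_{\wrho}$, and hence $\wI$, is already automorphic (Lemma \ref{lowering deformed}, since $\lg$ lives on $H_U(\Q)\backslash H_U(\A)/K_\rho$), so the $I_f$ term records the failure of $\mathrm{RC}_r\wI$ to be an exact preimage under lowering rather than a failure of automorphy; and the $c(0,\mu)$ term pairs with the constant term of $\mathrm{RC}_r\wI$, not of $I$, so cuspidality of $I$ alone does not dispose of it, a point the paper also leaves unaddressed.
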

\begin{proof}
Let $\varphi_f \in \Sc (V)(\A_f)$ and $\Xi_f \in \Sc(\A_{F,f})$ be the matching sections of $\phi_f$.
Applying Proposition \ref{lowering DN deformed}, 
we have 
\begin{align*}
&(4\pi)^{-r}\int_{\SL_2(\Z)\backslash \h}^{\reg} R_\tau^r f(\tau) \left[ \theta_{\wk_4,\chi}(g_\tau^\Delta,\phi_f\phi_\infty^{(1,-1)}) 
- (-1)^r \theta_{\wk_4,\chi}(g_\tau^\Delta,\phi_f\phi_\infty^{(-1,1)}) \right] d\mu(\tau)\\
=&(-4\pi)^{-r} \int_{\SL_2(\Z)\backslash \h}^{\reg} f(\tau) \left(R_1+R_2\right)^r
\left( I(g_\tau^\Delta,\varphi_f ( \varphi_\infty^{(1,-1)}-(-1)^r \varphi_\infty^{(-1,1)}),\Xi_f\Xi_\infty^-,\rho) \right) d\mu(\tau) \\
=& - \int_{\SL_2(\Z)\backslash \h}^{\reg} f(\tau) L_\tau (\mathrm{RC}_r \wI)(g_\tau^\Delta,\varphi_f\varphi_\infty^{(1,1)},\Xi_f\Xi_\infty^+,\rho) d\mu(\tau) \\
+& 2\log\varepsilon_\rho \frac{1}{\left|\mathrm{SL}_2(\mathbb{Z})/ \Gamma(N)\right|} 
\int_{\Gamma(N) \backslash \mathbb{H}}^{\mathrm{reg}} f_\mu(\tau) 
(\widetilde{\mathrm{RC}}_r I_f)\left(g_\tau^{\Delta}, \varphi_f ( \varphi_\infty^{(1,-1)}-(-1)^r \varphi_\infty^{(-1,1)}),\Xi_f\Xi_\infty^-,\rho\right) d \mu(\tau)\\
=& 2 \sqrt{D}^{r} \log \varepsilon_\rho \sum_{\mu\in L^\vee/L} c_\mu(f) 
- \sum_{m>0, \mu \in L^{\vee} / L} c(-m, \mu) \\
&\quad\times \sum_{\substack{\alpha \in \wf_2^+, \operatorname{Tr}(\alpha)=m \\ \exists \lambda_\alpha\in F, \text{ s.t. } \Nm \alpha = \Nm \lambda_\alpha^{-1} }} m^r P_r\left(\frac{\alpha-\alpha^{\prime}}{m}\right) 
\widetilde{W}_f(\alpha;\lambda_\alpha) \log |\lambda_\alpha/\lambda_\alpha'|,
\end{align*}
by Proposition \ref{FE deformed} following the same proof in \cite[Theorem 5.1]{BEY21}.

\end{proof}

Recall the Schwartz functions $\phi_f^\pm\in\Sc (W^\pm(\A_{\wf_2,f}))$ constructed in Section \ref{twisted CM values},
let $W^\pm_f$ be the finite part of the Whittaker function of $\theta_{\wk_4,\chi} (\gb,\phi_f^\pm \phi_\infty^\pm)$.
For any $\alpha\in \wf_2^\times$, if $\alpha = \Nm \beta$ for some $\beta \in \A_{\wk_4,f}$, then
\begin{align*}
W^+_f(t(\alpha)) = \int_{\A_{\wk_4^1,f}} \phi_f^+(\alpha\beta^{-1} h) \chi(h) dh,\quad
W^-_f(t(\alpha)) = \int_{\A_{\wk_4^1,f}} \phi_f^-(\alpha\beta^{-1} h) \chi(h) dh
\end{align*}
by Equation \eqref{sl2 whittaker}. 
In this case, we have $-\varepsilon\alpha = \Nm (\beta\beta_0)$ and
$\phi_f^-(-\varepsilon\alpha (\beta\beta_0)^{-1} h) 
= \phi_f^+( \alpha \beta^{-1} h)$.
Therefore, 
\begin{align}\label{same finite whittaker}
W^+_f(t(\alpha \sqrt{\widetilde{D}})) = W^-_f( t(-\varepsilon \alpha \sqrt{\widetilde{D}})).
\end{align}

\begin{proof}[Proof of Theorem \ref{tsw introduction}]
Since $\A_f^\times = \Q \widehat{\Z}^\times$, for any $\hb\in \GL_2(\A_{\wf_2})$, we have $\nu(\hb) = q_\hb \xi_\hb$, where $q_\hb\in \Q,\xi_\hb\in\widehat{\Z}^\times$.
Making a change of variable $g'=d(q_\hb)^{-1} g d(q_\hb)$, for any $(\varphi,\Xi)\in \Sc(V)(\A)\times \Sc(U)(\A))$ , we can rewrite the integral $I(\hb,\varphi,\Xi,\rho)$ as
\begin{align*}
I(\hb,\varphi,\Xi,\rho) = \int_{[G]} \theta_V ( g' d(\xi_\hb),\hb,\varphi) 
\int_{F^1\backslash \A_{F^1}} \theta ( g' d(\xi_\hb), h\lambda,\Xi) \rho ( h\lambda ) dh dg'.
\end{align*}
Applying Equation \eqref{weil right}, we have
\begin{align*}
\Theta (\pi_\rho) = \{ I(\hb,\varphi,\Xi,\rho) : (\varphi,\Xi) \in (\Sc(V)(\A)\times \Sc(U)(\A))^{\SL_2(\widehat{\Z})} \}.
\end{align*}
For any $\mu\in L^\vee/L$, let $\phi_\mu^\pm \in \Sc(W^\pm(\A_{\wf_2,f}))$ be the Schwartz functions constructed from $\varphi_\mu$ as in Section \ref{twisted CM values},.
By Theorem \ref{twisted siegel weil} and Proposition \ref{match weight}, there exist Schwartz functions $(\varphi_\mu^\pm,\Xi_\mu^\pm) \in (\Sc(V)(\A_f)\times \Sc(U)(\A_f))^{\SL_2(\widehat{\Z})}$ such that 
\begin{align*}
\theta_{\wk_4,\chi} (\gb,\phi_\mu^\pm \phi_\infty^\pm ) =I (\hb,\varphi_\mu^\pm\varphi_\infty^{(\pm 1.\mp 1)},\Xi_\mu^\pm\Xi_\infty^-,\rho),\quad \gb=\hb \in \GL_2(\A_{\wf_2})
\end{align*}
for any $\mu\in L^\vee/L$. 
Due to Equation \eqref{same finite whittaker}, we may choose $(\varphi_\mu^+,\Xi_\mu^+ )= (\varphi_\mu^-,\Xi_\mu^-)$.
Since $\phi_\mu^\pm$ is $\Q$-valued, we can take $(\varphi_\mu^\pm,\Xi_\mu^\pm)$ to be $(T^{1,\Delta},\widetilde{\omega}_f)$-invariant as in \cite[Theorem 3.11]{BLY22}.
Therefore, applying Equation \eqref{CM values torus}, we have
\begin{align*}
\begin{split}
& \frac{4}{\deg Z(W)} \left[ \Phi_f^r\left(Z_\chi\left(W^+\right)\right)-(-1)^r \Phi_f^r\left(Z_\chi\left(W^-\right)\right) \right]  \\
=& (4\pi)^{-r} \int_{\SL_2(\Z)\backslash \h}^{\reg} \sum_{\mu\in L^\vee/L} R_\tau^r f_\mu (\tau) 
\left[ \theta_{\wk_4,\chi}(g_\tau^\Delta,\phi_\mu^+\phi_\infty^{1,-1}) 
- (-1)^r \theta_{\wk_4,\chi}(g_\tau^\Delta,\phi_\mu^+\phi_\infty^{-1,1}) \right] d\mu(\tau)\\
=&(-4\pi)^{-r} \int_{\SL_2(\Z)\backslash \h}^{\reg} \sum_{\mu\in L^\vee/L} f_\mu(\tau) \left(R_1+R_2\right)^r
\left( I(g_\tau^\Delta,\varphi_\mu^+ ( \varphi_\infty^{(1,-1)}-(-1)^r \varphi_\infty^{(-1,1)}),\Xi_\mu^+\Xi_\infty^-,\rho) \right) d\mu(\tau) \\
=& 2 \sqrt{D}^{r} \log \varepsilon_\rho \sum_{\mu\in L^\vee/L} c_\mu(f) 
- \sum_{m>0, \mu \in L^{\vee} / L} c(-m, \mu) \\
&\quad \times \sum_{\substack{\alpha \in \wf_2^+, \operatorname{Tr}(\alpha)=m \\ \exists \lambda_\alpha\in F, \text{ s.t. } \Nm \alpha = \Nm \lambda_\alpha^{-1} }} m^r P_r\left(\frac{\alpha-\alpha^{\prime}}{m}\right) 
\widetilde{W}_f(\alpha;\lambda_\alpha) \log |\lambda_\alpha/\lambda_\alpha'|,
\end{split}
\end{align*}
by Theorem \ref{twisted integral botcherds forms}.
Since  $(\varphi_\mu^+,\Xi_\mu^+) \in (\Sc(V)(\A_f)\times \Sc(U)(\A_f))^{\SL_2(\widehat{\Z})}$ is $(T^{1,\Delta},\widetilde{\omega}_f)$-invariant,
$c_\mu(f)\in \Q$ by \cite[Proposition 4.11]{BLY22}.
Then the claim follows from Remark \ref{comparing finite whittaker} and we are done.
\end{proof}

\begin{remark}
Comparing with the CM values in \cite{BLY22}, the condition about $\text{Diff}(W,t)$ is replaced by existence of $\lambda\in F$ such that $\Nm \lambda = \Nm\alpha^{-1}$ and $\beta \in \A_{\wk_4,f}$ such that $\Nm\beta = \alpha$. 
\end{remark}

\printbibliography

\end{document}